\documentclass[10pt,reqno]{amsart}

\usepackage{graphicx}
\graphicspath{ {./images/} }
\usepackage{xargs}
\usepackage{geometry}
\usepackage{fancyhdr} 
\usepackage{lastpage} 
\usepackage{extramarks} 
\usepackage[most]{tcolorbox} 
\usepackage{xcolor} 
\usepackage[hidelinks]{hyperref} 
\usepackage[permil]{overpic}
\usepackage{pict2e} 
\usepackage{listings}
\usepackage{amssymb}
\usepackage{amsthm}
\theoremstyle{plain}
\usepackage[
backend=biber,
style=numeric,
sorting=ynt
]{biblatex}
\usepackage{xargs}
\usepackage{float}
\usepackage{lipsum}
\addbibresource{sample.bib}

\usepackage{geometry}
\usepackage{fancyhdr} 
\usepackage{lastpage} 
\usepackage{extramarks} 
\usepackage[most]{tcolorbox} 

\usepackage{xcolor} 
\usepackage[hidelinks]{hyperref} 
\usepackage[permil]{overpic}
\usepackage{pict2e} 
\usepackage{listings}

\newtheorem*{theorem*}{Theorem}

\theoremstyle{notation}

\numberwithin{equation}{section}
\theoremstyle{plain}
\newtheorem{definition}{Definition}[section]
\newtheorem{theorem}[equation]{Theorem}
\newtheorem{claim}[equation]{Theorem}
\newtheorem{remark}[equation]{Remark}
\newtheorem{corollary}[equation]{Corollary}
\newtheorem{lemma}[equation]{Lemma}
\newtheorem{conj}[equation]{Conjecture}
\newtheorem{proposition}[equation]{Proposition} 
\providecommand{\keywords}[1]
{
  \small	
  \textbf{\textit{Keywords---}} #1
}

\geometry{
a4paper,
tmargin=1in,
bmargin=0.85in,
lmargin=0.7in,
rmargin=0.7in,
textwidth=6.5in,
textheight=9.5in,
headsep=0.25in
}

\title{Topological lower bounds for the Rössler System}
\author{Eran Igra}

\begin{document}

\begin{abstract}
The Rössler System is one of the best known chaotic dynamical systems, exhibiting a plethora of complex phenomena - and yet, only a few studies tackled its complexity analytically. Building on previous work by the author, in this paper we characterize the dynamical complexity for the Rössler System at parameter values at which the flow satisfies a certain heteroclinic condition. This will allow us to characterize the knot type of infinitely many periodic trajectories for the flow - and reduce the Rössler system to a simpler hyperbolic flow, capturing its essential dynamics.
\end{abstract}

\maketitle
\keywords{\textbf{Keywords} - The Rössler Attractor, Heteroclinic bifurcations, Topological Dynamics, Template Theory}
\section{Introduction}

In 1976, Otto E. Rössler introduced the following system of Ordinary Differential Equations, depending on parameters $A,B,C\in\mathbf{R}^{3}$ \cite{Ross76}:

\begin{equation} \label{Vect}
\begin{cases}
\dot{X} = -Y-Z \\
 \dot{Y} = X+AY\\
 \dot{Z}=B+Z(X-C)
\end{cases}
\end{equation}

Inspired by the Lorenz attractor (see \cite{Lo}), Otto E. Rössler attempted to find the simplest non-linear flow exhibiting chaotic dynamics. This is realized by the vector field above, as it has precisely one non-linearity, $XZ$ - and as observed by Rössler, the vector field appears to generate a chaotic attractor for $(A,B,C)=(0.2,0.2,5.7)$. In more detail, at these parameter values Rössler observed the first return map of the flow has the shape of a horseshoe (i.e., numerically), which is known to be chaotic (see \cite{S}).\\

\begin{figure}[h]
\centering
\begin{overpic}[width=0.4\textwidth]{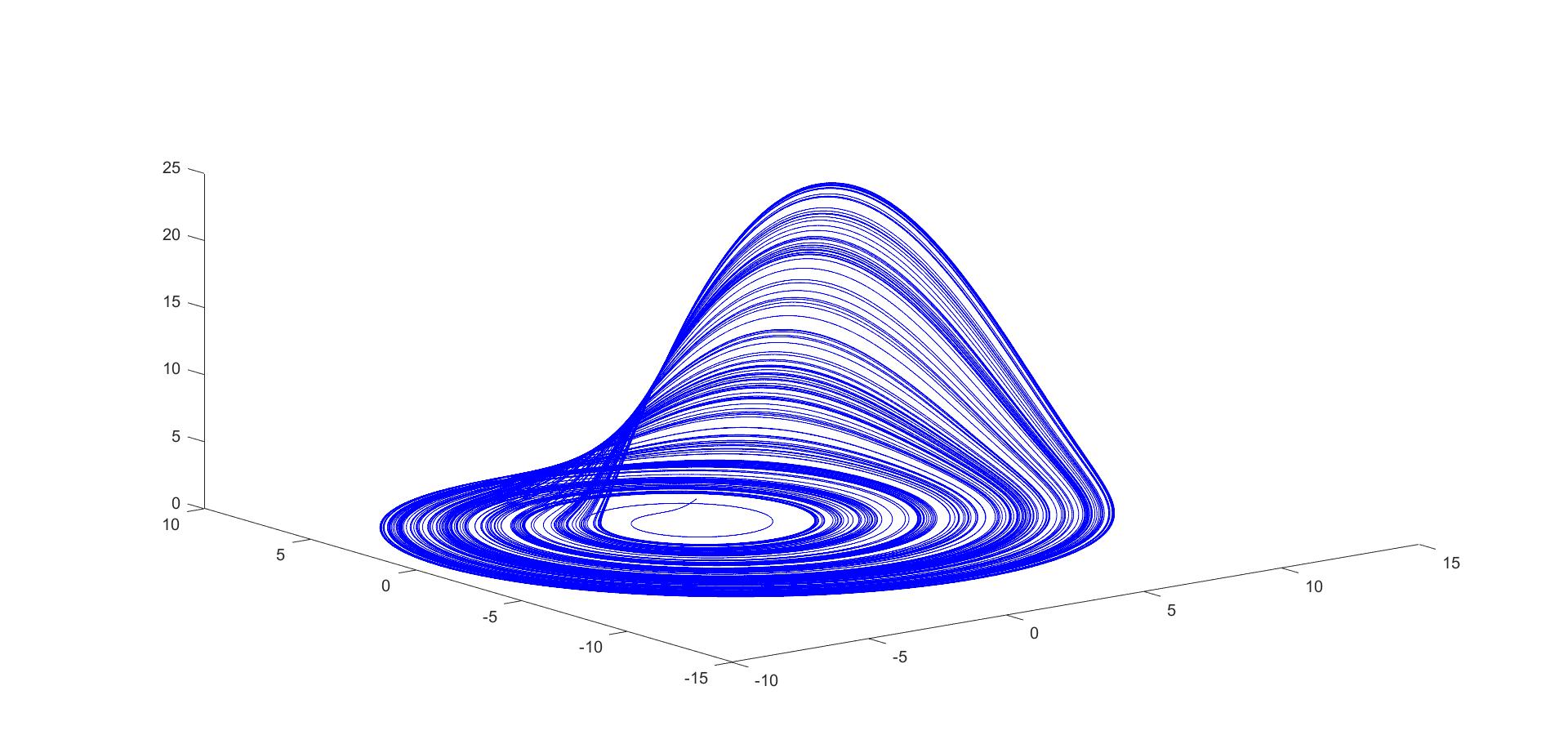}
\end{overpic}
\caption[Fig1]{The Rössler attractor at $(A,B,C)=(0.2,0.2,5.7)$}
\end{figure}

Since its introduction in 1976, the Rössler system was the focus of many numerical studies - despite the simplicity of the vector field, the flow gives rise to many non-linear phenomena (see, for example: \cite{MBKPS}, \cite{BBS}, \cite{G}, \cite{BB2}, \cite{SR}). One particular feature is that varying the parameters $A,B,C$ often leads to a change in the collection of knot types of the periodic trajectories on the attractor - see, for example, \cite{Le},\cite{RO}.\\

To our knowledge, no study attempted to analytically explain the topological changes in the attractor described above - and it is precisely this gap that our results address. Motivated by the results of both \cite{I} and \cite{Pi}, in this paper we prove that provided the Rössler system satisfies a certain heteroclinic condition at some specific parameter values $(A,B,C)$ one can reduce the flow to a simpler model - as well as classify the knot-types generated as periodic trajectories using Template Theory (see Th.\ref{BIRW} in the next section). Roughly speaking, we say a parameter $(A,B,C)$ is a \textbf{trefoil parameter} provided the corresponding Rössler system generates a heteroclinic knot whose knot-type is the trefoil knot (see Def.\ref{def32}).\\

With this notion in  mind, we now introduce our results. We begin with the following Theorem, which we prove in Section $2$ (see Th.\ref{deform}):
\begin{claim}\label{th4}
    Let $(A,B,C)=p$ be a trefoil parameter for the Rössler system, and denote by $F_p$ the corresponding vector field w.r.t. Eq.\ref{Vect}. Then, there exists a smooth vector field $G$, hyperbolic on a bounded, invariant set in $\mathbf{R}^3$, s.t. every periodic trajectory for $G$ (save possibly for two) can be deformed by an isotopy of $\mathbf{R}^3$ to a periodic trajectory of $F_p$.
\end{claim}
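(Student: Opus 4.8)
The plan is to reduce the analysis of $F_p$ to the study of a first-return map on a two-dimensional cross-section transverse to the heteroclinic trefoil knot, to package the resulting dynamics on a branched surface (template) to which the template machinery of Th.\ref{BIRW} applies, and finally to realize all of the template's periodic orbits inside the Rössler flow. First I would fix a cross-section $\Sigma$ meeting the trefoil heteroclinic knot guaranteed by Def.\ref{def32}, and study the first-return map $R$ on $\Sigma$. Using the hyperbolic saddle-focus structure at the two equilibria of the Rössler system together with the heteroclinic condition, I expect $R$ to contain a topological horseshoe, i.e. a compact invariant set on which $R$ is semiconjugate onto a subshift. This is precisely the mechanism developed in \cite{I}; the new feature here is that the trefoil knot-type of the connection dictates how the return strips of the horseshoe wind around one another in $\mathbf{R}^3$.

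Next I would encode this invariant set as a template $T\subset\mathbf{R}^3$, a branched surface carrying the suspension of the subshift, embedded so that its branch lines and return bands reproduce the crossing and linking data forced by the trefoil. The model field $G$ is then constructed as a hyperbolic (semi)flow on a handlebody neighborhood $N(T)$ of this template, whose nonwandering set is the suspension of the shift. By construction the periodic orbits of $G$ are in bijection with those lying on $T$, up to the two boundary orbits of the branched surface, and by Th.\ref{BIRW} their knot and link types are precisely those carried by $T$.

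The main obstacle is the realization (lower-bound) direction: showing that every periodic orbit carried by $T$ actually occurs as a periodic trajectory of $F_p$ with the same knot type up to an isotopy of $\mathbf{R}^3$. The semiconjugacy from the first step produces, for each periodic symbol sequence, a periodic orbit of $F_p$; the content is to show that this orbit, sitting inside the Rössler flow, is isotopic to the corresponding orbit on $T$. The natural route is to verify that the isotopy type of such an orbit depends only on its braid word, which is read off identically from the horseshoe strips and from the template strands, so that matching symbolic codes forces matching knot types. Here I anticipate the technical difficulties to lie in controlling the flow away from $\Sigma$ (so that the suspension of each symbolic orbit genuinely closes up without introducing unwanted crossings) and in passing from the semiconjugacy to an honest ambient isotopy, which is exactly the role played by the results of \cite{I} and \cite{Pi}.

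Finally I would identify the two exceptional orbits. These are the boundary periodic orbits of $T$, i.e. the orbits bounding its branch lines, which under the Birman-Williams collapse are the only ones that can fail to lift back faithfully. In $F_p$ they correspond to pieces of the heteroclinic skeleton rather than to genuine periodic trajectories, and hence need not be realized; every other periodic orbit of $G$ is matched to one of $F_p$, completing the argument.
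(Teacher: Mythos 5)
Your proposal and the paper share a starting point (the first-return map of \cite{I}, a topological horseshoe, and the Birman--Williams machinery), but the construction of $G$ is genuinely different, and the difference hides a gap at exactly the step you flag as the main obstacle. You build $G$ abstractly as a hyperbolic suspension on a neighborhood of a template $T$ that you \emph{declare} to carry the crossing data forced by the trefoil, and then you must prove the realization direction: that each periodic orbit of $T$ is ambient-isotopic to an actual periodic orbit of $F_p$. Your proposed route --- matching braid words read off the horseshoe strips and the template strands --- presupposes the very thing that needs proving, namely that the return strips of $f_p$ embed and link in $\mathbf{R}^3$ the same way as the template bands, and that the semiconjugacy of Th.\ref{th31} (which is only a semiconjugacy, with fibers $D_s$ that are merely connected) neither loses nor collapses periodic orbits under deformation. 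The paper avoids this by never constructing $T$ first: it builds $G$ as an explicit smooth deformation of the vector field $F_p$ itself, in three stages (compactify to $R_p$ on $S^3$ joining $\Gamma_{In}$ and $\Gamma_{Out}$ through $\infty$; open the two saddle-foci by Hopf bifurcations so the heteroclinic knot $W=\Theta\cup\Gamma$ blows up into a tube and the singular point $P_0$ of the return map opens into a disc, making the return map a continuous distorted horseshoe on a rectangle $ABCD$; then straighten to an honest Smale horseshoe). Because the deformation happens at the level of the flow, periodic trajectories are carried along by an ambient isotopy, and their persistence and non-collapse is exactly the content of the isotopy-stability result Cor.\ref{deformation11} (unremovability and uncollapsibility of the orbits in $Q$). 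That is the ingredient your braid-word step would have to reconstruct, and it is not supplied by \cite{I} in the form you invoke it.

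A second, smaller discrepancy: your accounting of the two exceptional orbits does not match the mechanism in the paper. You attribute them to the boundary/branch-line orbits lost in the Birman--Williams collapse; in the paper's proof of Th.\ref{deform} the two exceptions are the periodic orbits created by the Hopf bifurcations at $P_{In}$ and $P_{Out}$, which close back onto fixed points when $G$ is deformed back to $F_p$ (the Birman--Williams exceptions only enter later, in Th.\ref{geometric}). In your construction $G$ has no fixed points, so your count may still be salvageable, but you would need to verify it independently rather than import it from Th.\ref{BIRW}, since there the at-most-two exceptional orbits live on the template rather than in the flow.
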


Now, let $f$ denote a surface homeomorphism $f:S\to S$. Provided $S$ is also of negative Euler characteristic, there exists a dynamically-minimal homeomorphism $g:S\to S$ s.t. $f$ is both isotopic to $g$, and $g$ is semi-conjugate $f$ (see, for example, Th.5.1 and Th.6.2 in \cite{Bo}). Moreover, every periodic orbit of $g$ can be continuously deformed to a periodic orbit of $f$ (see Th.1 and Th.2 in \cite{Han}). Since first-return maps for smooth flows are surface diffeomorphisms, one would expect similar results to hold for three-dimensional flows - and yet, the problem of generalizing such facts to three-dimensional flows is far from solved (see \cite{M} for a survey of this problem). In this context, it is easy to see the vector field $G$ given by Th.\ref{th4} is an analogue of the dynamically minimal maps described above - as such, Th.\ref{th4} can be considered as a step towards solving this problem.\\

Now, recall that given a hyperbolic flow $\xi_t,t\in\mathbf{R}$ defined in $\mathbf{R}^3$ (see Def.\ref{hyperbolicvector}), there exists a branched surface - often referred to as a Template (see Def.\ref{deftemp} in Section $2$) - which encodes every knot-type generated by $\xi_t$ as a periodic trajectory (see Th.\ref{BIRW} in the next section for the precise formulation). Inspired by Th.\ref{th4}, we apply Th.\ref{th4} to prove the following Theorem in Section $3$ (see Th.\ref{geometric}):

\begin{claim}
\label{geomod}    Let $(A,B,C)=p$ be a trefoil parameter for the Rössler system, and denote by $F_p$ the corresponding vector field. Then, there exists a Template, denoted by $L(0,1)$, s.t. any knot type in $L(0,1)$ (save possibly for two) is realized as a periodic trajectory for $F_p$. Consequentially, the  Rössler system at trefoil parameters generates infinitely many periodic trajectories whose knot type is that of a torus knot (see Def.\ref{torusknot}). 
\end{claim}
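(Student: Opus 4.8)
The plan is to transport knotting data from the hyperbolic model field $G$ produced by Th.\ref{th4} to the Rössler field $F_p$, using the template associated to $G$ as the bookkeeping device.

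I would begin by invoking Th.\ref{th4} to obtain the smooth vector field $G$, which is hyperbolic on a bounded invariant set $K\subset\mathbf{R}^3$. Since $G$ is hyperbolic, Th.\ref{BIRW} supplies a template $T$ whose periodic orbits carry, with the correct knot type, every knot realized as a periodic trajectory of $G$. The central task is then to identify $T$ explicitly with the Lorenz-type template $L(0,1)$. This is where the trefoil hypothesis enters: because $p$ is a trefoil parameter, the heteroclinic knot organizing $K$ is a trefoil (Def.\ref{def32}), and I expect this to pin down both the two-branch structure of the first-return map of $G$ on a suitable cross-section and the framing of its bands. Concretely, I would analyze this return map, exhibit it as a two-branch Lorenz-type map, and read off the half-twist recorded by the label $(0,1)$ from the framing induced by the trefoil heteroclinic connection; much of this structure should already be visible from the construction of $G$ carried out in the proof of Th.\ref{th4}.

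With the identification $T=L(0,1)$ in hand, the transfer to $F_p$ is immediate. By Th.\ref{th4}, every periodic orbit of $G$, save possibly for two, is carried by an ambient isotopy of $\mathbf{R}^3$ to a periodic trajectory of $F_p$; since ambient isotopy preserves knot type, every knot type appearing on $L(0,1)$---with at most two exceptional orbits deleted---is realized by $F_p$, which proves the first assertion. For the corollary, I would invoke the classical knot theory of Lorenz-type templates: $L(0,1)$ carries infinitely many pairwise distinct torus knots, realized by a suitable infinite family of periodic orbits, exactly as in the Birman--Williams analysis of $L(0,0)$. Because Th.\ref{th4} excludes at most two orbits while this torus-knot family is infinite, all but finitely many of these orbits persist as periodic trajectories of $F_p$, yielding infinitely many torus-knotted periodic orbits (Def.\ref{torusknot}).

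The \textbf{main obstacle} I anticipate is the explicit identification $T=L(0,1)$. Showing that the return dynamics of $G$ produce exactly a two-band Lorenz template with a single half-twist---rather than some other branched surface or some other twist label---requires careful bookkeeping of the orientation and framing data along the invariant manifolds of $K$, and it is precisely the trefoil knot type of the heteroclinic loop that must be converted into the twist label $(0,1)$. A secondary, and much easier, point is to confirm that the two orbits excluded by Th.\ref{th4} (plausibly the boundary orbits associated to the heteroclinic structure) are harmless for the corollary: since any finite exclusion from an infinite family leaves infinitely many survivors, the torus-knot count is unaffected.
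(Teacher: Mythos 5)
Your overall architecture matches the paper's: obtain the hyperbolic field $G$ from Th.\ref{th4}, extract its template via the Birman--Williams Theorem (Th.\ref{BIRW}), identify that template with $L(0,1)$, push the knot types back to $F_p$ along the isotopy of Th.\ref{th4}, and finish by citing the infinitude of torus knots on $L(0,1)$ (the paper cites Th.~6.1.2.b of \cite{HW}); your closing observation that deleting at most two orbits cannot kill an infinite family is exactly the paper's final step. The one place where you diverge --- and where you yourself flag the main obstacle --- is the identification $T=L(0,1)$. You propose to compute the template directly from the return map of $G$, reading the branch structure and the twist label off the framing of the trefoil heteroclinic connection. The paper instead routes this step through the B\'eguin--Bonatti uniqueness-of-models theorem (Th.\ref{begbon}): it first shows (Prop.\ref{orbiteq}) that $G$, restricted to the complement of small balls around its source and sink, is a \emph{model} in the sense of Def.\ref{model}, hence unique up to orbital equivalence; it then exhibits an explicit suspension $H$ of a Smale horseshoe around the trefoil (Fig.\ref{fig34}, Fig.\ref{fig35}), observes that $H$ is a model for the same flow, and concludes $G$ and $H$ are orbitally equivalent, so $T$ equals the explicitly drawn template $V$, which is then isotoped to $L(0,1)$ (Fig.\ref{fig37}). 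This buys two things your direct computation would not automatically give: it shows the template is independent of the (many) choices made in constructing $G$, and it replaces the delicate framing bookkeeping you anticipate with a comparison of two suspensions. Your route is legitimate in principle, but as written the identification is a plan rather than an argument --- ``I expect this to pin down the framing'' is precisely the step that needs to be carried out --- so if you pursue the direct computation you must actually extract the twist label from the embedding of the suspended horseshoe around the trefoil, or else invoke Th.\ref{begbon} as the paper does.
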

Th.\ref{geomod} proves the dynamics of $G$ can be thought of as a "topological lower bound" for the complexity of the Rössler system at trefoil parameter - thus making it "essentially hyperbolic". Consequentially, it follows by Th.\ref{geomod} that the dynamics of the Rössler system at trefoil parameter are essentially hyperbolic, regardless of whether the vector field actually splits the tangent bundle to stable and unstable bundles. Consequentially, Th.\ref{geometric} can be thought of as a step towards proving the Chaotic Hypothesis, introduced in \cite{gal} - which states the flow on a given chaotic attractor is essentially hyperbolic.\\

Finally, we conclude this paper with the following result, which answers how much of the complex dynamics at trefoil parameters persist under perturbations. Using Th.\ref{geometric} and the Fixed-Point Index for maps, we prove the following result (see Th.\ref{persistence} in Section $4$):

\begin{claim}
\label{perss}    Let $(A,B,C)=p$ be a trefoil parameter, and assume $T$ is a knot type on encoded by the Template $L(0,1)$, given by Th.\ref{geomod}. Then, given any parameter $v=(A',B',C')$ sufficiently close to $p$, the Rössler system corresponding to $v$ generates a periodic trajectory whose knot type is $T$.
\end{claim}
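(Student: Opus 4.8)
The plan is to translate the existence of a periodic trajectory of knot type $T$ into a fixed-point problem for an iterate of a Poincaré return map, and then to invoke the smooth dependence of the Rössler vector field on its parameters together with the stability of the Fixed-Point Index under small perturbations. At the trefoil parameter $p$, Th.\ref{geomod} guarantees that $T$ is realized as a periodic trajectory $\gamma_T$ of $F_p$ carried by the template $L(0,1)$. Choosing a cross-section $\Sigma$ transverse to the flow so that $\gamma_T$ meets $\Sigma$ in finitely many points $x_1,\dots,x_n$, the orbit $\gamma_T$ corresponds to a fixed point of the $n$-th iterate $P_p^{\,n}$ of the first-return map $P_p:\Sigma\to\Sigma$. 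The combinatorial manner in which $\gamma_T$ threads through $\Sigma$ --- the data the semiconjugacy of Th.\ref{geomod} assigns to $T$ --- determines its knot type, and I would record it as a family of thin topological disks $D_1,\dots,D_n\subset\Sigma$ about the $x_i$, whose union $N=\bigcup_i D_i$ forms an isolating neighborhood of the fixed point of $P_p^{\,n}$.

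The central step is to show that this fixed point carries a nonzero Fixed-Point Index. Here I would extract from the realization argument of Th.\ref{geomod} a topological-horseshoe (covering-relation) description of $P_p$ on the disks $D_i$: the disks cover one another across $\Sigma$ in the pattern dictated by the symbolic dynamics of $L(0,1)$, and for such a covering the Fixed-Point Index of the associated fixed point of $P_p^{\,n}$ over $N$ is $\pm1$, hence nonzero. This is precisely the mechanism by which $T$ is realizable at $p$ in the first place.

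To obtain persistence, I would use that the right-hand side of Eq.\ref{Vect} depends smoothly on $(A,B,C)$, so the flow, the section $\Sigma$, the return map $P_v$, and its iterate $P_v^{\,n}$ all vary continuously with $v$. For $v$ sufficiently close to $p$ the set $N$ remains isolating, with no fixed points of $P_v^{\,n}$ on $\partial N$; the Fixed-Point Index over $N$ is then defined and, being a homotopy invariant of the map restricted near $\partial N$, equals its value at $p$. Since that value is nonzero, $P_v^{\,n}$ has a fixed point in $N$, corresponding to a periodic trajectory $\gamma_T'$ of $F_v$. Finally, because $\gamma_T'$ passes through each disk $D_i$ and threads them exactly as $\gamma_T$ does, and $N$ may be taken arbitrarily thin, $\gamma_T'$ is ambient isotopic to $\gamma_T$ in $\mathbf{R}^3$, and therefore has knot type $T$.

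The hard part will be the second step --- making the nonzero-index claim rigorous. One must convert the semiconjugacy of Th.\ref{geomod} into an honest covering-relation description of the return map on the $D_i$, and then verify that the index of the corresponding fixed point is genuinely nonzero rather than merely that some fixed point exists at $p$. Degeneracies in the covering are exactly where a vanishing index could occur, and these are presumably the source of the two exceptional orbits excluded in Th.\ref{geomod}; one must confirm they do not arise for the chosen $T$. A secondary technical point is arranging the thin isolating neighborhood $N$ uniformly, so that the threading of $\gamma_T'$ --- and hence its knot type --- is controlled at the same time as the persistence of the fixed point.
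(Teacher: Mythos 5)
Your proposal follows essentially the same route as the paper: both reduce the statement to the persistence of a fixed point of the $k$-th iterate of the return map via a nonzero Fixed-Point Index that is invariant under the homotopy induced by perturbing the parameters, and both control the knot type of the perturbed orbit by isotoping a tubular/isolating neighborhood built from the flow lines through the section. The only difference is bookkeeping: the paper obtains the non-vanishing index on the disc $B_p$ directly from Lemma 4.5 of \cite{I} rather than re-deriving it from a covering-relation description, which is exactly the step you flagged as the hard part.
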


That is, Th.\ref{perss} has the following meaning - the closer a parameter $v=(A,B,C)$  is to a trefoil parameter, the more complex are the dynamics of the corresponding Rössler system. As such, in many respects, Th.\ref{perss} is a topological version of Th.4.4 in \cite{I} - where a similar result was proven for the first-return map using symbolic dynamics.

\section*{Preliminaries}
In this section we review several of the preliminaries required to prove our results in sections $3,4$ and $5$. This section is organized as follows - we begin with a quick introduction to the Rössler system and state several results proven in \cite{I} - in particular, we rigorously define the notion of a trefoil parameter, and recall its connection to the onset of chaos. Later, we review several facts from Template Theory, and state how it is used to analyze the periodic dynamics of three-dimensional flows. 

\subsection{Trefoil parameters and chaotic dynamics in the Rössler system.}

From now on given $(a,b,c)\in\mathbf{R}^3$, we switch to the more convenient form of the Rössler system:
\begin{equation} \label{Field}
\begin{cases}
\dot{x} = -y-z \\
 \dot{y} = x+ay\\
 \dot{z}=bx+z(x-c)
\end{cases}
\end{equation}
We always denote this vector field corresponding to $(a,b,c)\in\mathbf{R}^3$ by $F_{a,b,c}$. This definition is slightly different from the one presented in Eq.\ref{Vect} - however, setting $p_1=\frac{-C+\sqrt{C^2-4AB}}{2A}$, it is easy to see that whenever $C^2-4AB>0$, $(X,Y,Z)=(x-ap_1,y+p_1,z-p_1)$ defines a change of coordinates between the vector fields in Eq.\ref{Vect} and Eq.\ref{Field}.\\

Since the vector field in Eq.\ref{Field} depends on three parameters, $(a,b,c)$, we now specify the region in the parameter space in which we prove our results. The parameter space $P\subseteq\mathbf{R}^3$ we consider throughout this paper is composed of parameters satisfying the following\label{eq:9}:

\begin{itemize}
\item \textbf{Assumption $1$} -  for every parameter $p\in P,p=(a,b,c)$ the parameters satisfy $a,b\in(0,1)$ and $c>1$. For every choice of such $p$, the vector field $F_p$ given by Eq.\ref{Field} always generates precisely two fixed points - $P_{In}=(0,0,0)$ and $P_{Out}=(c-ab,b-\frac{c}{a},\frac{c}{a}-b)$.
\item \textbf{Assumption 2 }- for every $p\in P$ the fixed points $P_{In},P_{Out}$ are both saddle-foci of opposing indices. In more detail, we always assume that $P_{In}$ has a one-dimensional stable manifold, $W^s_{In}$, and a two-dimensional unstable manifold, $W^u_{In}$. Conversely, we always assume $P_{Out}$ has a one-dimensional unstable manifold, $W^u_{Out}$, and a two-dimensional stable manifold, $W^s_{Out}$ (see the illustration in Fig.\ref{loci}). 
\item \textbf{Assumption 3 }- For every $p\in P$, let $\gamma_{In}<0$ and $\rho_{In}\pm i\psi_{In}$, $\rho_{In}>0$ denote the eigenvalues of $J_p(P_{In})$, the linearization of $F_p$ at $P_{In}$, and set $\nu_{In}=|\frac{\rho_{In}}{\gamma_{In}}|$. Conversely, let $\gamma_{Out}>0$, $\rho_{Out}\pm i\psi_{Out}$ s.t. $\rho_{Out}<0$ denote the eigenvalues of $J_p(P_{Out})$, the linearization at $P_{Out}$, and define $\nu_{Out}=|\frac{\rho_{Out}}{\gamma_{Out}}|$. We will refer to $\nu_{In},\nu_{Out}$ as the respective saddle indices at $P_{In},P_{Out}$, and we will always assume $(\nu_{In}<1)\lor(\nu_{Out}<1)$ - that is, for every $p\in P$ at least one of the fixed points satisfies the \textbf{Shilnikov condition} (see Th.13.8 in \cite{SSTC} or \cite{LeS} for more details on the connection between the Shilnikov condition and the onset of chaos). 
\end{itemize}

\begin{figure}[h]
\centering
\begin{overpic}[width=0.4\textwidth]{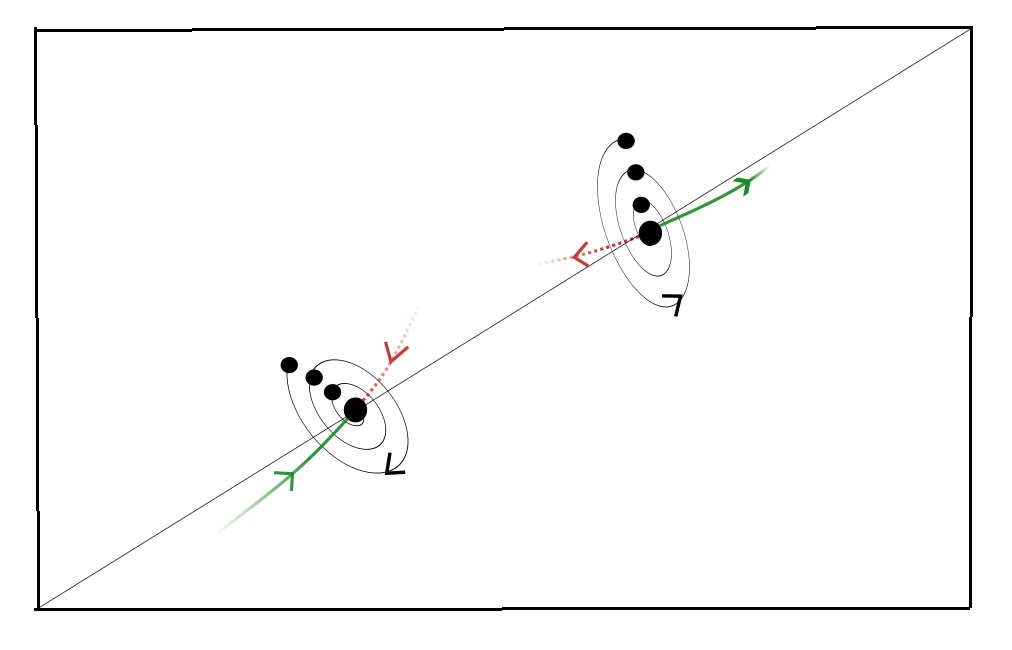}%
\put(630,280){$W^s_{Out}$}
\put(750,430){$W^u_{Out}$}
\put(690,390){$P_{Out}$}
\put(390,130){$W^u_{In}$}
\put(410,220){$P_{In}$}
\put(440,420){$U_p$}
\put(530,280){$l_p$}
\put(570,180){$L_p$}
\put(250,110){$W^s_{In}$}
\end{overpic}
\caption[Fig2]{The local dynamics around the fixed points. The green and red flow lines are the one-dimensional components in $W^s_{In},W^u_{Out}$. $U_p$ is the half-plane above the line $l_p$.}\label{loci}
\end{figure}

It is easy to see the parameter space $P$ we are considering is an open set in $\mathbf{R}^3$. In addition, it includes the parameter space for the Rössler system considered in \cite{BBS},\cite{MBKPS} and \cite{G}, where many interesting bifurcation phenomena were observed. As far as the general topological dynamics of vector field $F_p$, $p\in P$ are concerned, we have the following result, proven in \cite{I} using Th.1 from \cite{LiLl} (see Th.2.8, Lemma 2.6 and Cor.2.5  in \cite{I}):

\begin{claim}
  \label{th21}  For every parameter $p\in P$, the vector field $F_p$ can be continuously extended to a vector field of $S^3$ with three fixed points: the saddle-foci $P_{In}$ and $P_{Out}$, and a degenerate fixed point at $\infty$ of index $0$ - moreover, $F_v$ is smooth away from $\infty$. Consequentially, the Rössler system corresponding to $p$ generates two heteroclinic trajectories:
\begin{itemize}
    \item $\Gamma_{In}\subseteq W^s_{In}$, which connects $P_{In},\infty$ in $S^3$. Moreover, $\Gamma_{In}$ is trapped in the region $\{\dot{y}\geq0\}\cap\{(x,y,z)|y\geq0\}$ (were $\dot{y}$ is taken w.r.t. Eq.\ref{Field}).
    \item $\Gamma_{Out}\subseteq W^u_{Out}$, which connects $P_{Out}$, $\infty$ in $S^3$. Similarly, $\Gamma_{Out}$ is trapped in the region $\{\dot{y}\geq0\}\cap\{(x,y,z)|\leq\frac{ab-c}{a}\}$.
\end{itemize}
As a consequence, for every sufficiently large $r>0$, there exists a smooth vector vector field on $S^3$, $R_p$, s.t.:
\begin{itemize}
    \item $R_p$ coincides with $F_p$ on the open ball $B_r(P_{In})$.
    \item $R_p$ has precisely two fixed points in $S^3$ - namely, the saddle foci $P_{In}$ and $P_{Out}$.
    \item $R_p$ generates a heteroclinic trajectory which connects $P_{In},P_{Out}$ and passes through $\infty$.
\end{itemize}
\end{claim}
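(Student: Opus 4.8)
The plan is to realize $S^3$ as the one-point compactification $\mathbf{R}^3\cup\{\infty\}$ and to study $F_p$ near $\infty$ through the inversion $\iota(x)=x/|x|^2$. Since $F_p$ is polynomial, and hence smooth on $\mathbf{R}^3$, the extension is automatically smooth on $S^3\setminus\{\infty\}$, so the only issue is the behaviour at $\infty$. Pushing $F_p$ forward by $\iota$ and clearing the pole introduced by the inversion (a time rescaling by a suitable power of $|w|$), the resulting field on a punctured neighbourhood of the origin — the image of $\infty$ — is governed by the highest-degree part of $F_p$, which is concentrated in the single quadratic nonlinearity $xz$ appearing in $\dot z$. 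The content of Th.1 in \cite{LiLl} is exactly that for this class of quadratic fields the rescaled field extends continuously across $\infty$ with $\infty$ an isolated zero; I would invoke it to obtain the continuous — though in general only $C^0$ — extension, and record that $\infty$ is an equilibrium of the extended field on $S^3$.

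Next I would pin down the indices of the three equilibria. For a nondegenerate zero of a vector field the Poincaré index equals $\mathrm{sign}\det J$, where $J$ is the linearization. Under Assumptions $2$–$3$ the linearization at $P_{In}$ has eigenvalues $\gamma_{In}<0$ and $\rho_{In}\pm i\psi_{In}$, so $\det J_p(P_{In})=\gamma_{In}(\rho_{In}^2+\psi_{In}^2)<0$ and $\mathrm{ind}(P_{In})=-1$; symmetrically $\det J_p(P_{Out})=\gamma_{Out}(\rho_{Out}^2+\psi_{Out}^2)>0$, so $\mathrm{ind}(P_{Out})=+1$. Since these are the only finite equilibria, the Poincaré–Hopf theorem on $S^3$ gives $\mathrm{ind}(P_{In})+\mathrm{ind}(P_{Out})+\mathrm{ind}(\infty)=\chi(S^3)=0$, whence $\mathrm{ind}(\infty)=0$; this both forces and is consistent with the degeneracy of $\infty$ obtained from the compactification.

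For the heteroclinics I would analyse the one-dimensional invariant manifolds directly. The branch of $W^s_{In}$ tangent to the $\gamma_{In}$-eigendirection that enters $\{y\ge 0\}$ is, when traced backward in time, confined to $R_{In}=\{\dot y\ge 0\}\cap\{y\ge 0\}$: one checks on the relevant boundary pieces (the nullcline $\dot y=x+ay=0$ and the plane $y=0$) that the flow cannot exit $R_{In}$, so $R_{In}$ is a trapping region for this branch. Inside $R_{In}$ there are no other equilibria — in particular $P_{Out}$ is excluded, since its $y$-coordinate $b-\tfrac{c}{a}$ is negative under Assumption $1$ — and a monotonicity/Lyapunov estimate on $R_{In}$ rules out any bounded $\alpha$-limit set, so the branch is unbounded and, by the compactness of $S^3$ together with the compactification above, limits to $\infty$; this is $\Gamma_{In}$. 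The argument for $\Gamma_{Out}\subseteq W^u_{Out}$ is the mirror image, tracing the $\gamma_{Out}$-branch forward in time inside $\{\dot y\ge 0\}\cap\{y\le\tfrac{ab-c}{a}\}$.

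Finally, to produce $R_p$ I would desingularize $\infty$. Because $\mathrm{ind}(\infty)=0$, the restriction of $F_p$ to a small sphere around $\infty$ has degree $0$, so by Hopf's theorem it extends to a nowhere-vanishing field on the enclosed ball; replacing $F_p$ there and smoothing yields a field with no equilibrium near $\infty$, hence with exactly the two equilibria $P_{In},P_{Out}$. Choosing the ball small enough — equivalently $r$ large enough — keeps $R_p=F_p$ on $B_r(P_{In})$. Since $\Gamma_{Out}$ enters and $\Gamma_{In}$ leaves every neighbourhood of $\infty$, the index-$0$ (flow-through) local model can be arranged so that the incoming orbit is matched to the outgoing one, concatenating $\Gamma_{Out}$ and $\Gamma_{In}$ into a single heteroclinic from $P_{Out}$ to $P_{In}$ passing through $\infty$. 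I expect the main obstacle to be the escape-to-infinity step: verifying that $R_{In}$ is genuinely trapping and contains no bounded recurrence requires quantitative control of the flow far from the origin, and dovetailing this with the merely $C^0$ behaviour of the field at $\infty$ coming from \cite{LiLl} is the delicate point, whereas the index bookkeeping and the surgery are comparatively routine.
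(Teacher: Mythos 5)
This statement is not proved in the paper at all: it is imported verbatim from \cite{I}, where it is established using Th.1 of \cite{LiLl} (the paper points to Th.2.8, Lemma 2.6 and Cor.2.5 of \cite{I}). Your overall route — one-point compactification controlled by the quadratic part via Th.1 of \cite{LiLl}, the index bookkeeping $\mathrm{ind}(P_{In})=-1$, $\mathrm{ind}(P_{Out})=+1$, Poincar\'e--Hopf forcing $\mathrm{ind}(\infty)=0$, and the index-zero surgery at $\infty$ to produce $R_p$ — is the same strategy as the cited source, and those parts of your argument are sound.

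The genuine gap is in the confinement step for $\Gamma_{In}$ and $\Gamma_{Out}$. You assert that $R_{In}=\{\dot y\ge 0\}\cap\{y\ge 0\}$ is a trapping region for the backward flow and that this only requires "checking the boundary pieces," but as stated this fails. On the face $\{y=0\}\cap\{x\ge 0\}$ of $\partial R_{In}$ one has $\dot y=x+ay=x\ge 0$, so the field points \emph{into} $\{y\ge 0\}$; hence backward orbits can exit $R_{In}$ through that face, and $R_{In}$ is not negatively invariant. Likewise, on the face $\{\dot y=0\}$ the crossing direction is governed by $\ddot y=\dot x=-y-z$, whose sign changes across the line $l_p$ — this is precisely the $U_p$ versus $L_p$ dichotomy the paper sets up before Cor.\ref{TR}, so trajectories cross that face in both directions and no one-sided invariance holds there either. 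Consequently the confinement of the specific branch of $W^s_{In}$ cannot be reduced to a boundary-sign check on a wedge; it requires tracking which component of $\{\dot y=0\}$ the trajectory can meet and in which direction, which is the actual content of Lemma 2.6 and Cor.2.5 in \cite{I}. You correctly flag the escape-to-infinity estimate as delicate, but the invariance claim itself, not just the quantitative control far from the origin, is where the argument as written breaks. The remaining steps (Hopf's theorem to desingularize the index-zero equilibrium, matching the incoming orbit $\Gamma_{Out}$ to the outgoing orbit $\Gamma_{In}$ through a flow-through plug at $\infty$) are routine once the two unbounded separatrices are known to reach $\infty$ from within disjoint wedges.
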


\begin{figure}[h]
\centering
\begin{overpic}[width=0.4\textwidth]{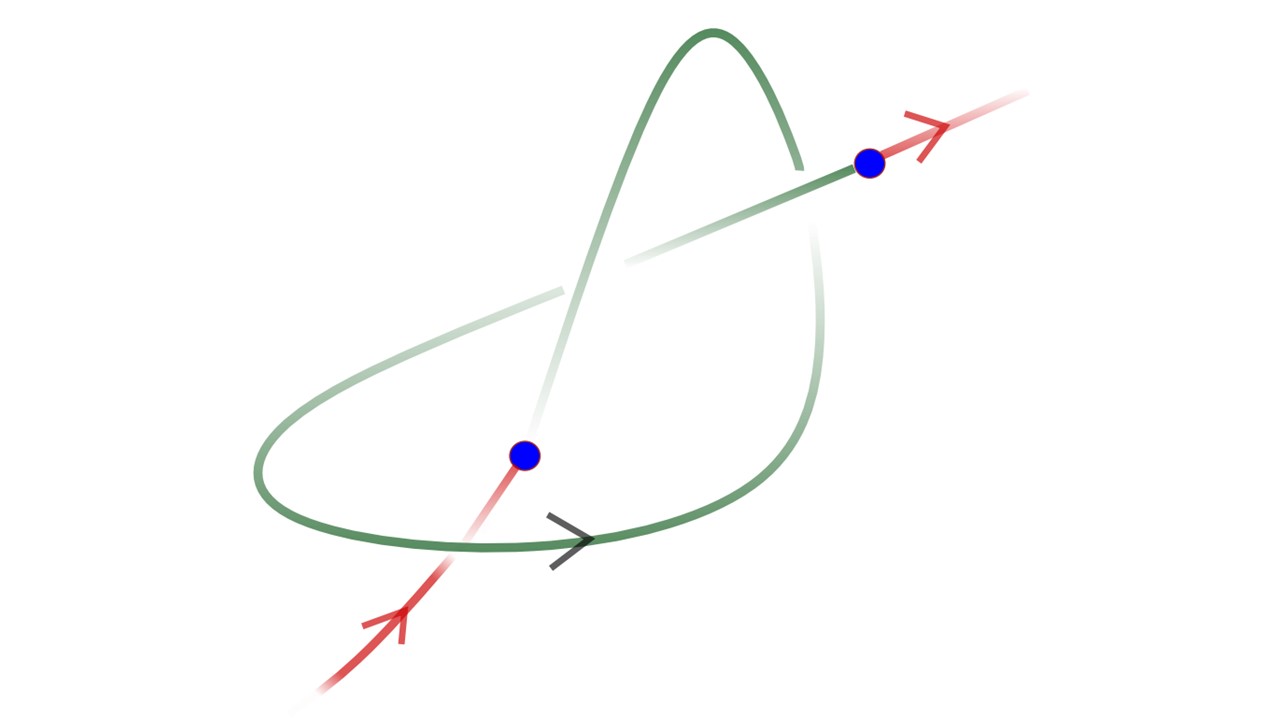}
\put(440,210){$P_{In}$}
\put(340,70){$\Gamma_{In}$}
\put(400,350){$\Theta$}
\put(660,370){$P_{Out}$}
\put(780,430){$\Gamma_{Out}$}
\end{overpic}
\caption[Fig7]{A heteroclinic trefoil knot (see Def.\ref{def32}). $\Theta$ denotes the bounded heteroclinic trajectory, while $\Gamma_{In},\Gamma_{Out}$ denote the unbounded heteroclinic trajectories given by Th.\ref{th21}.}
\label{fig7}
\end{figure}
For an illustration, see Fig.\ref{fig7} and \ref{TRANS}. We will also need several other general results from \cite{I}. To recall these results, choose some $p\in P$, $p=(a,b,c)$, and consider the cross-section $Y=\{\dot{y}=0\}=\{(x,-\frac{x}{a},z)|x,z\in\mathbf{R}\}$ defined by Eq.\ref{Field}, and set $l_p=\{(x,-\frac{x}{a},\frac{x}{a})|x\in\mathbf{R}\}$ (see the illustration in Fig.\ref{loci}). Because the normal vector to $Y$ is $N=(1,a,0)$, it follows by direct computation that $l_p=\{s\in Y|F_v(s)\bullet N=0\}$, and $P_{In},P_{Out}\in l_p$. It follows $Y\setminus l_p$ constitutes of two components, both half planes, parameterized as follows:

\begin{itemize}
    \item $U_p=\{(x,-\frac{x}{a},z)|x\in\mathbf{R}, \frac{x}{a}<z\}$ - that is, the upper half plane (see the illustration in Fig.\ref{loci}). On $U_p$ the vector field $F_p$ points into the half-space $\{\dot{y}<0\}$.
    \item $L_p=\{(x,-\frac{x}{a},z)|x\in\mathbf{R}, \frac{x}{a}>z\}$ - that is, the lower half plane (see the illustration in Fig.\ref{loci}). On $L_p$ the vector field $F_p$ points into the half-space $\{\dot{y}>0\}$.
\end{itemize}

It is easy to see both cross-sections $U_p$ and $L_p$ vary smoothly when we vary smoothly the parameter $p$ in the parameter space. As proven in Lemma 2.2 and Lemma 2.1 in \cite{I}, the local dynamics on $U_p$ satisfy the following:

\begin{corollary}
\label{TR}    For any parameter $p\in P$, the cross-section $U_p$ satisfies the following:
    \begin{itemize}
        \item The two-dimensional $W^u_{In},W^s_{Out}$ are transverse to $U_p$ at $P_{In},P_{Out}$ (respectively - see the illustration in Fig.\ref{loci})).
        \item Given any $s\in\mathbf{R}^3$, denote its trajectory by $\gamma_s(t)$, $t\in\mathbf{R},\gamma_s(0)=0$. Provided $\Gamma(s)=\{\gamma_s(t)|t>0\}$ is bounded and $\Gamma(s)$ does not limit to a fixed point, there exists some $t_0>0$ s.t. $\gamma_s(t_0)$ is a transverse intersection point between $U_p,\Gamma(s)$. Consequentially, every periodic trajectory hits $U_p$ transversely at least once.
    \end{itemize}
\end{corollary}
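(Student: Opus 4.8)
The plan is to treat the two bullets separately, but to base both on a single computation governing how the flow meets $Y$. Writing $N=(1,a,0)$ for the normal to $Y$, one checks by direct substitution that the restriction of $F_p\bullet N$ to $Y$ equals $\frac{x}{a}-z$. Hence on $U_p=\{(x,-\frac{x}{a},z)\mid z>\frac{x}{a}\}$ we have $F_p\bullet N<0$, so \emph{every} intersection of a trajectory with $U_p$ is automatically transverse; the whole problem in the second bullet is to produce an intersection, never to check its transversality. Moreover, if $h(t):=\dot y(\gamma_s(t))=x(t)+ay(t)$ records the value of $\dot y$ along the trajectory, then $h'(t)=F_p\bullet N$, so $\gamma_s$ meets $U_p$ at $t_0$ exactly when $h(t_0)=0$ and $h'(t_0)<0$, i.e. exactly when $y(t)$ has a transverse local maximum at $t_0$. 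This reformulation, ``hitting $U_p$ $=$ local maximum of $y$,'' will drive the second bullet.

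For the first bullet I argue at the level of tangent spaces. Since $U_p\subseteq Y$ and $Y$ is a plane, $T_{P_{In}}U_p=T_{P_{In}}Y=N^{\perp}=:Y_0$, and transversality of the $2$-dimensional $W^u_{In}$ to $U_p$ at $P_{In}$ amounts to $T_{P_{In}}W^u_{In}+Y_0=\mathbf{R}^3$; as both summands are $2$-planes in $\mathbf{R}^3$ this is equivalent to $E^u_{In}\neq Y_0$, where $E^u_{In}=T_{P_{In}}W^u_{In}$ is the unstable eigenspace of $J_p(P_{In})$. Because $E^u_{In}$ is invariant under $J_p(P_{In})$, it suffices to show $Y_0$ is \emph{not} invariant. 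Using
\begin{equation}
J_p(P_{In})=\begin{pmatrix}0&-1&-1\\ 1&a&0\\ b&0&-c\end{pmatrix},
\end{equation}
and the basis $(a,-1,0),(0,0,1)$ of $Y_0$, a direct computation gives $J_p(P_{In})(a,-1,0)^{T}=(1,0,ab)^{T}\notin Y_0$, so $Y_0$ is not invariant and transversality follows. The assertion for $W^s_{Out}$ at $P_{Out}$ is identical after replacing the matrix by $J_p(P_{Out})=\begin{pmatrix}0&-1&-1\\ 1&a&0\\ c/a&0&-ab\end{pmatrix}$, for which the same basis vector maps to $(1,0,c)^{T}\notin Y_0$.

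For the second bullet I argue by contradiction: assume $\gamma_s(t)\notin U_p$ for all $t>0$. By the reformulation above this says $y(t)$ has no transverse local maximum on $(0,\infty)$. I first check that tangential contacts of $\gamma_s$ with the fold line $l_p$ cannot create local maxima of $y$ either: at such a point $h=h'=0$, and a short Taylor analysis shows $\dot y$ does not change sign from $+$ to $-$ there. Hence $y$ has no local maximum at all, and since a bounded function on $(0,\infty)$ with no local maximum cannot oscillate (otherwise it would cross some intermediate level infinitely often, producing infinitely many local maxima), $\liminf_t y=\limsup_t y$ and $y(t)\to L$ for some finite $L$. Consequently every point of the $\omega$-limit set $\omega(s)$ has $y$-coordinate $L$; as $\omega(s)$ is flow-invariant, any trajectory inside it satisfies $\dot y\equiv 0$, hence lies in $Y\cap\{y=L\}$, forcing $x\equiv -aL$ and then (from $\dot x=-y-z=0$) $z\equiv -L$. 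Thus $\omega(s)$ consists of fixed points; being compact, connected, and built from the isolated points $P_{In},P_{Out}$, it must be a single fixed point, contradicting the hypothesis that $\Gamma(s)$ does not limit to a fixed point. The final claim is then immediate: a periodic trajectory is bounded and, being non-constant, has non-constant periodic $y(t)$, whose maximum is a local maximum and hence a transverse crossing of $U_p$.

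The computations for the first bullet are routine, so the main obstacle lies entirely in the second, and within it in the careful bookkeeping of how $\gamma_s$ can meet $Y$. The delicate points are showing that tangential contacts with the fold line $l_p$ behave like inflection points of $y$ rather than maxima, so that the absence of $U_p$-crossings genuinely forbids \emph{every} local maximum of $y$, and then that the resulting convergence $y(t)\to L$ really forces the connected $\omega$-limit set to collapse onto a single fixed point.
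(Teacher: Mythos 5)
The paper does not actually prove Corollary \ref{TR}; it imports it wholesale from Lemmas 2.1 and 2.2 of \cite{I}, so there is no in-paper argument to compare against. Judged on its own terms, your proof is correct and self-contained, and the overall architecture is sound: the observation that $F_p\bullet N$ restricted to $Y$ equals $\frac{x}{a}-z$, so that every encounter with $U_p$ is automatically transverse and ``hitting $U_p$'' is synonymous with ``transverse local maximum of $y$,'' is exactly the right reduction; the linear-algebra argument for the first bullet (the unstable/stable $2$-plane is $J_p$-invariant while $Y_0=N^{\perp}$ is not, as witnessed by $(a,-1,0)^T\mapsto(1,0,ab)^T$, resp. $(1,0,c)^T$) is clean and complete; and the $\omega$-limit-set argument correctly converts ``no local maximum of $y$'' into convergence to a fixed point, contradicting the hypothesis.

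The one place where you assert rather than prove is the claim that a tangential contact with the fold line $l_p$ cannot be a local maximum of $y$ --- and this is precisely the delicate point of the whole second bullet, so it should be carried out. It does check out: writing $h=\dot y=x+ay$, one has $h''=\ddot x+a\ddot y=a\dot x+(a^2-1)\dot y-\dot z$, and on $l_p$ both $\dot y=0$ and $\dot x=-y-z=\frac{x}{a}-\frac{x}{a}=0$, so $h''=-\dot z=-\frac{x}{a}\,(x-c+ab)$, which vanishes only at $x=0$ and $x=c-ab$, i.e. only at $P_{In}$ and $P_{Out}$. Hence at any non-fixed point of $l_p$ the contact is quadratic, $h$ keeps a constant sign on a punctured neighborhood of $t_0$, $y$ is strictly monotone there, and no local maximum occurs; this also shows the zeros of $h$ along $\gamma_s$ are isolated away from the fixed points, so your case analysis (crossing in $U_p$, in $L_p$, or tangency on $l_p$) is exhaustive. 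With that computation inserted, the proof is complete.
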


\begin{figure}[h]
\centering
\begin{overpic}[width=0.4\textwidth]{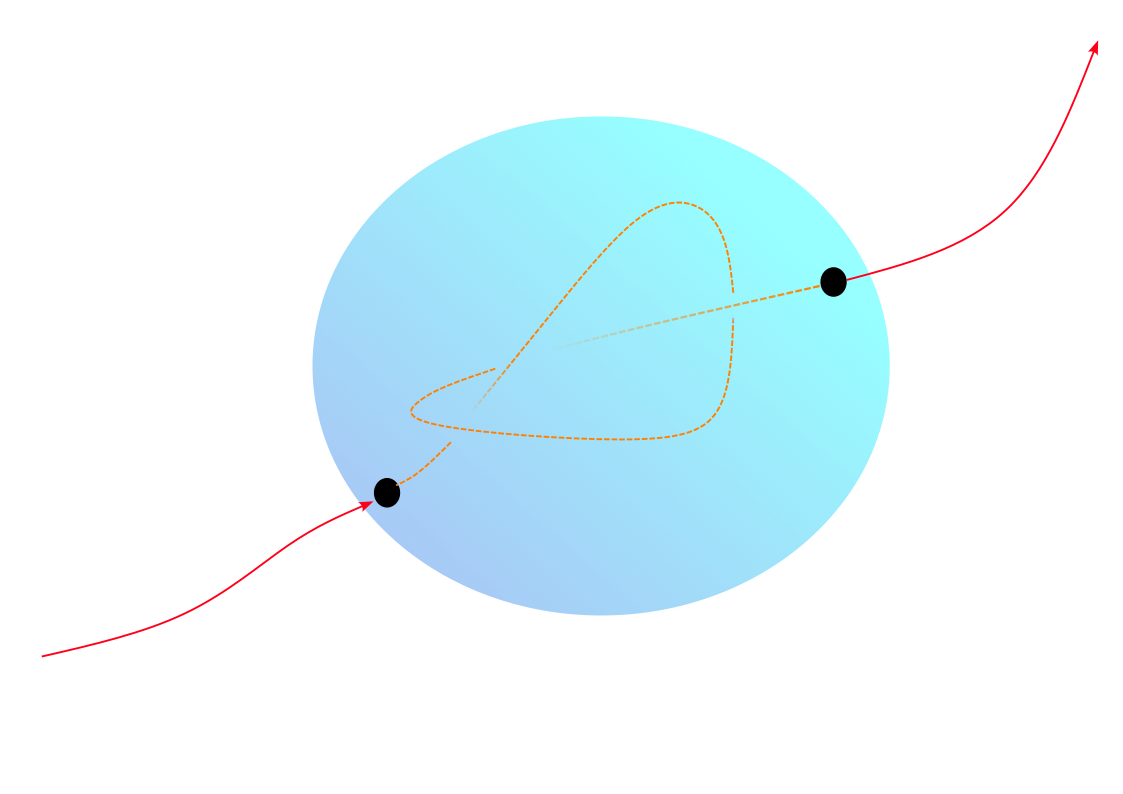}
\put(460,250){$\Theta$}
\put(270,190){$P_{In}$}
\put(160,80){$\Gamma_{In}$}
\put(400,480){$B_\alpha$}
\put(680,390){$P_{Out}$}
\put(860,420){$\Gamma_{Out}$}
\end{overpic}
\caption[The ball $B_\alpha$.]{\textit{The heteroclinic trajectory $\Theta$ trapped inside the topological ball $B_\alpha$, sketched as a the blue sphere.}}
\label{boundedtref}
\end{figure}

As stated in the introduction, in \cite{I} the author had proven a criterion for the existence of complex dynamics for the Rössler system. Since the proofs of both Th.\ref{geometric} and \ref{deform} are heavily based on that criterion, let us now introduce it. In order to do so, we first make the following observation - assume $p\in P$ is a parameter s.t. the vector field $F_p$ generates a bounded, heteroclinic trajectory $\Theta$ which flows from $P_{Out}$ to $P_{In}$ - in particular, $\Theta=W^s_{In}\cap W^u_{Out}$ (see the illustration in Fig.\ref{fig7}). Now, consider the set $\Lambda=W^s_{In}\cup W^u_{Out}\cup\{P_{In},P_{Out},\infty\}$. Since $W^s_{In}=\Theta\cup\Gamma_{In}$ and $W^u_{Out}=\Theta\cup\Gamma_{Out}$ by Th.\ref{th21} it immediately follows $\Lambda$ is a knot in $\mathbf{R}^3$. Motivated by this observation, from now on we consider a very specific type of such heteroclinic knots, trefoil parameters, defined below:
\begin{definition}\label{def32}
With the notations above, we say $p=(a,b,c)\in P$ is a \textbf{trefoil parameter for the Rössler system} provided the following three conditions are satisfied by the corresponding vector field $F_p$:
\begin{itemize}
    \item There exists a bounded heteroclinic trajectory $\Theta$ as in Fig.\ref{fig7}. Consequentially, $\Lambda$ (as defined above) forms a trefoil knot in $S^3$.
    \item The two-dimensional manifolds $W^u_{In}$ and $W^s_{Out}$ coincide. This condition implies $\overline{W^u_{In}}=\overline{W^s_{Out}}$ forms the boundary of an open topological ball - which, from now on, we always denote by $B_\alpha$. It is easy to see $\Theta\subseteq B_\alpha$, while $\Gamma_{In},\Gamma_{Out}\not\subseteq B_\alpha$ (see the illustration in Fig.\ref{boundedtref}).
    \item $\Theta\cap \overline{U_p}=\{P_0\}$ is a point of transverse intersection - see the illustration at Fig.\ref{intersect} and Fig.\ref{fig9}.
    \item  $\Theta\cap \overline{U_p}=\{P_0\}$ is a point of transverse intersection - see the illustration at Fig.\ref{intersect} and Fig.\ref{fig9}\end{itemize}.
\end{definition}

\begin{figure}[h]
\centering
\begin{overpic}[width=0.4\textwidth]{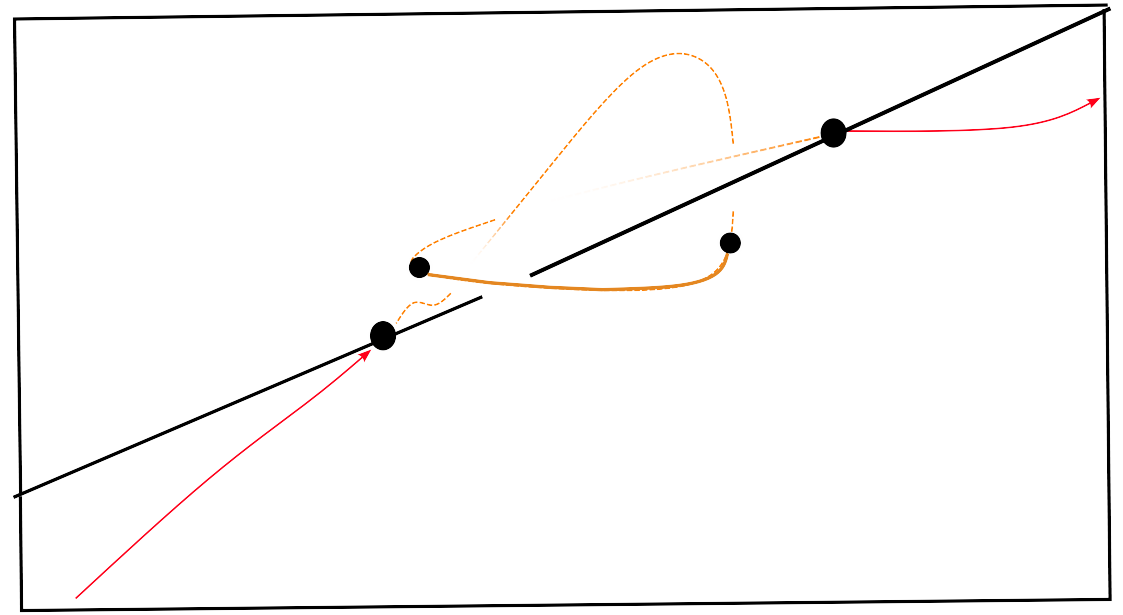}
\put(480,250){$\Theta$}
\put(320,180){$P_{In}$}
\put(160,50){$\Gamma_{In}$}
\put(310,340){$P_0$}
\put(670,310){$P_1$}
\put(100,450){$U_p$}
\put(700,50){$L_p$}
\put(730,370){$P_{Out}$}
\put(900,400){$\Gamma_{Out}$}
\end{overpic}
\caption[The intersection of the heteroclinic trefoil with $\{\dot{y}=0\}$.]{\textit{The heteroclinic trajectory $\Theta$ (for a trefoil parameter) winds once around $P_{In}$ - hence it intersects the half-plane $U_p$ at $P_0$ and the half-plane $L_p$ at $P_1$.}}
\label{intersect}
\end{figure}

As observed numerically, for a large class of parameters $v\in P$, the two-dimensional invariant manifold shields trajectories from escaping to $\infty$, thus repelling them towards the attractor. It is easy to see something similar happens at trefoil parameters - namely, the trajectories of initial conditions $s\in B_\alpha$ tend towards $P_{Out}$ along $\partial B_\alpha=W^s_{Out}$ until getting repelled along the bounded heteroclinic trajectory $\Theta$ towards $P_{In}$. Additionally, the existence of parameters $p\in P$ at which a heteroclinic trajectory $\Theta$ as in Fig.\ref{fig7} was observed numerically (see Fig.$5.B1$ in \cite{MBKPS}). In light of this discussion, it is easy to see that trefoil parameters can be considered as an idealized form of the Rössler system (for a more detailed discussion, see the discussion at the beginning of Section $3$ in \cite{I}).\\ 

\begin{figure}[h]
\centering
\begin{overpic}[width=0.4\textwidth]{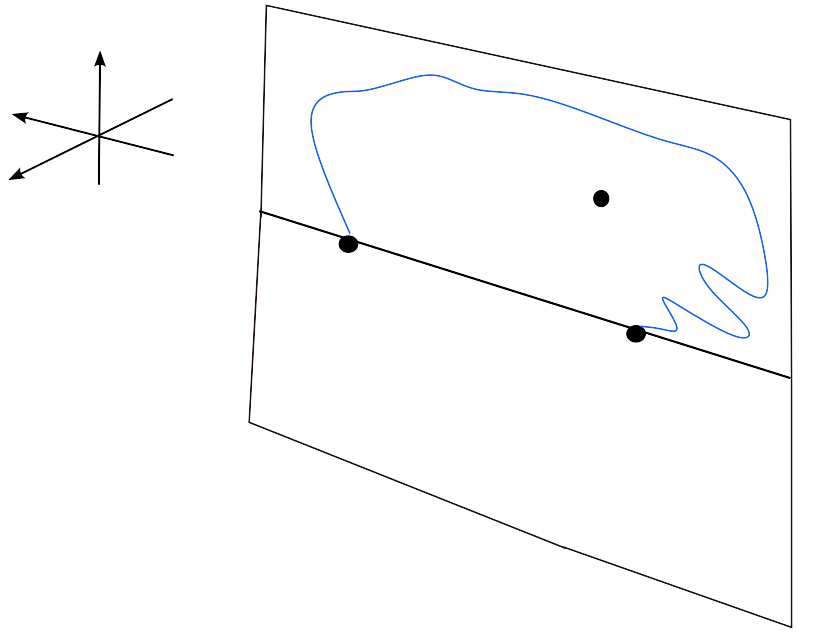}
\put(690,340){$P_{In}$}
\put(670,530){$P_0$}
\put(1050,200){$\{\dot{y}<0\}$}
\put(100,150){$\{\dot{y}>0\}$}
\put(350,440){$P_{Out}$}
\put(570,450){$l$}
\put(770,630){$L$}
\put(350,730){$U_p$}
\put(460,550){$D_\alpha$}
\put(700,200){$L_p$}
\put(-20,640){$x$}
\put(-20,550){$y$}
\put(110,730){$z$}
\end{overpic}
\caption[The disc $D_\alpha$.]{\textit{The Jordan domain $D_\alpha\subseteq U_p$, bounded by the blue curve $L=\partial B_\alpha\cap U_p$, the fixed points, and the boundary arc $l$.}}
\label{fig9}
\end{figure}

The reason we are interested in trefoil parameters is because, as stated earlier, at trefoil parameters one can prove the flow dynamics are complex - i.e., at trefoil parameters the Rössler system generates an infinite collection of periodic trajectories. In order to state these results, we first recall several facts from \cite{I} about the first-return map for the flow at trefoil parameters. We begin with the following fact, which is the amalgamation of Lemmas 3.1, 3.2, Prop.3.3 and Prop.3.4 proven in \cite{I}:

\begin{proposition}
  \label{boundball} Let $p\in P$ be a trefoil parameter, and set $D_\alpha=B_\alpha\cap U_p$, $l=B_\alpha\cap l_p$ (where $U_p$ and $l_p$ are as defined immediately before Cor.\ref{TR}). Then, we have the following:

  \begin{itemize}
      \item The set $L=\partial B_\alpha\cap U_p$ is a curve, connecting $P_{In}$ and $P_{Out}$, and $l$ is a straight line connecting $P_{In}$ and $P_{Out}$. Consequentially, $D_\alpha$ is a topological disc on $U_p$ (see the illustration in Fig.\ref{fig9}). 
      \item Consequentially, $\partial D_\alpha$ in $U_p$ consists of the union $L\cup l\cup\{P_{In},P_{Out}\}$ (see the illustration in Fig.\ref{fig9}).
      \item $P_0$, the intersection of the bounded heteroclinic trajectory $\Theta$ with $U_p$, is interior to $D_\alpha$ (see the illustration in Fig.\ref{fig9}).
      \item The first-return map $f_p:\overline{D_\alpha}\setminus\{P_0\}\to\overline{D_\alpha}\setminus\{P_0\}$ is well-defined on the cross-section $\overline{D_\alpha}\setminus\{P_0\}$. In particular, $\overline{D_\alpha}\setminus\{P_0\}$ is homeomorphic to a punctured closed disc (see the illustration in Fig.\ref{fig9}).
  \end{itemize}
\end{proposition}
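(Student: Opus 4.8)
The plan is to separate the statement into its geometric content (the first three bullets, describing $D_\alpha$ as a punctured topological disc) and its dynamical content (the fourth bullet, the first-return map), and to prove the geometric part first, since the return map is defined precisely on the object the geometric part produces.

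For the geometry, the central object is the invariant sphere $\partial B_\alpha=\overline{W^u_{In}}=\overline{W^s_{Out}}$. First I would record that every point of $\partial B_\alpha$ other than $P_{In},P_{Out}$ lies simultaneously in $W^u_{In}$ and $W^s_{Out}$, hence on a heteroclinic trajectory running from $P_{In}$ to $P_{Out}$; thus the flow restricted to $\partial B_\alpha$ is a source-to-sink flow with no further recurrence, and by Assumption $2$ both ends are foci. Since $U_p\subseteq\{\dot y=0\}$ and the flow is transverse to $U_p$, the set $L=\partial B_\alpha\cap U_p$ is a curve transverse to this flow. Using the transversality of $W^u_{In},W^s_{Out}$ to $U_p$ at the fixed points (Cor.\ref{TR}) together with the explicit description $l_p=\{s\in Y\mid F_p(s)\bullet N=0\}$, I would show that the straight line $l_p$ meets $\partial B_\alpha$ exactly at $P_{In}$ and $P_{Out}$; consequently $l=l_p\cap B_\alpha$ is the open segment joining them, and $L$ is a single arc lying in the open half-plane $U_p$ with the same endpoints. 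The union $J=L\cup l\cup\{P_{In},P_{Out}\}$ is then a Jordan curve in $\overline{U_p}$, and a point of $U_p$ lies inside it exactly when it lies in $B_\alpha$; by the Jordan curve theorem the enclosed region is precisely $D_\alpha$, which is therefore a topological disc with $\partial D_\alpha=J$. This yields the first two bullets. For the third, $P_0=\Theta\cap\overline{U_p}$ lies in the open ball $B_\alpha$ because $\Theta\subseteq B_\alpha$, and transversality of the intersection forces $P_0\notin l_p$ (the flow is tangent to $Y$ along $l_p$); hence $P_0\in U_p$, and since $D_\alpha=B_\alpha\cap U_p$ is open in $U_p$, the point $P_0$ is interior to $D_\alpha$.

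For the fourth bullet I would first observe that $\overline{B_\alpha}$ is a compact invariant set: $\partial B_\alpha$ is a union of invariant manifolds and fixed points, so by uniqueness of solutions no orbit from $B_\alpha$ can cross it, and $B_\alpha$ is the bounded complementary component of the sphere (it contains $\Theta$ but not the trajectories escaping to $\infty$). Hence every forward orbit from $\overline{D_\alpha}$ stays in $\overline{B_\alpha}$, and any return to $U_p$ automatically lands in $\overline{B_\alpha}\cap U_p=\overline{D_\alpha}$. It then remains to show that each $q\in\overline{D_\alpha}\setminus\{P_0\}$ (the two fixed points being fixed by $f_p$) returns to $U_p$. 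If $q$ is interior, its bounded orbit cannot converge to $P_{Out}$ (that would force $q\in W^s_{Out}=\partial B_\alpha$) nor to $P_{In}$ (that would force $q\in W^s_{In}\cap B_\alpha=\Theta$, whose only point on $\overline{U_p}$ is $P_0$); so by Cor.\ref{TR} its orbit crosses $U_p$ transversely, giving $f_p(q)$, and this crossing is not $P_0$ since $P_0$ lies only on $\Theta$. If $q\in L$, its orbit stays on $\partial B_\alpha$ and runs from $P_{In}$ to $P_{Out}$; because both are foci the orbit spirals and so re-crosses the arc $L\subseteq U_p$ before limiting on $P_{Out}$, giving $f_p(q)\in L$. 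Points of the segment $l$ are interior to $B_\alpha$ and are handled as in the interior case. The excluded point $P_0$ lies on $W^s_{In}$, so its forward orbit is the arc of $\Theta$ converging to $P_{In}$ and never returns; this is exactly the puncture. As $\overline{D_\alpha}$ is a closed disc with $P_0$ interior, $\overline{D_\alpha}\setminus\{P_0\}$ is a punctured closed disc.

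The hard part will be the purely topological step of controlling how the sphere $\partial B_\alpha$ meets the plane $Y$ and the line $l_p$ — ruling out that $l_p$ re-enters the ball or that $\partial B_\alpha\cap U_p$ acquires extra components — since $\partial B_\alpha$ is only a topological, not a round, sphere. I expect this to require combining the local transversality at the two foci (Cor.\ref{TR}) with the global source--sink structure of the flow on $\partial B_\alpha$, rather than any explicit coordinate computation.
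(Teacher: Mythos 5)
The paper does not actually prove this proposition: it is imported verbatim from \cite{I}, stated as ``the amalgamation of Lemmas 3.1, 3.2, Prop.3.3 and Prop.3.4'' of that reference, so there is no in-paper argument to compare yours against line by line. Judged on its own terms, your outline is the natural reconstruction and its logical skeleton is sound: the decomposition into (i) the geometry of $\partial B_\alpha\cap\overline{U_p}$ and (ii) the well-definedness of the return map via invariance of $\overline{B_\alpha}$ plus Cor.\ref{TR} is exactly the division the cited lemmas suggest, and your case analysis for the fourth bullet (interior points cannot limit to $P_{In}$ or $P_{Out}$ unless they lie on $\Theta$ or $\partial B_\alpha$; points of $L$ are handled by the spiralling into the focus $P_{Out}$; $P_0$ is the unique non-returning point because $\Theta\cap\overline{U_p}=\{P_0\}$ also kills any preimage of $P_0$) is correct. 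Your treatment of $P_0$ being interior --- transversality forces $P_0\notin l_p$, hence $P_0\in B_\alpha\cap U_p=D_\alpha$, which is open in $U_p$ --- is clean and complete.

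That said, you have correctly located, but not closed, the one step that carries all the content: that $L=\partial B_\alpha\cap U_p$ is a \emph{single} arc and that $l_p$ meets $\partial B_\alpha$ only at the two fixed points. Be aware that this is more delicate than ``a curve transverse to the flow'': since both fixed points are foci, every heteroclinic orbit in $\partial B_\alpha$ crosses $U_p$ infinitely many times near each end, so the connectedness of $L$ cannot be argued one-orbit-at-a-time; $L$ must be produced as the continuation of the local arc $W^u_{In}\cap\overline{U_p}$ emanating from $P_{In}$ (which exists by the transversality in Cor.\ref{TR}), and one must show this continuation never terminates in $U_p$, never meets $l_p$ before reaching $P_{Out}$, and exhausts all of $\partial B_\alpha\cap U_p$. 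This is precisely what Lemmas 3.1--3.2 of \cite{I} supply, and without it the Jordan-curve step identifying $D_\alpha$ with the region enclosed by $L\cup l\cup\{P_{In},P_{Out}\}$ does not go through. So your proposal is a faithful plan rather than a proof; it is consistent with the source but leaves its hardest ingredient as an acknowledged black box.
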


The first-return map $f_p:\overline{D_\alpha}\setminus\{P_0\}\to\overline{D_\alpha}\setminus\{P_0\}$ is not continuous on the punctured disc $D_\alpha\setminus\{P_0\}$ - in fact, as far as its continuity properties are concerned we have the following fact (see Prop.3.8, Lemma 3.12 and Cor.3.13 in \cite{I}):

\begin{proposition}
    \label{lem33}
    Let $p\in P$ be a trefoil parameter, and let $l$, $L$ and $f_p:\overline{D_\alpha}\setminus\{P_0\}\to\overline{D_\alpha}\setminus\{P_0\}$ be the first-return map as in Th.\ref{boundball}. Then, we have the following:
    \begin{itemize}
        \item $f_p(L)=L$, and $f_p$ is continuous on $L$.
        \item There exists a component $\delta$ of $f^{-1}_p(l)\cap D_\alpha$ s.t. its closure $\overline\delta$ is homeomorphic to a closed interval, and connects $P_0$ and some $\delta_0\in l$ (see the illustration in Fig.\ref{fig91}). In particular, $\delta_0\ne P_{In}$.
        \item There exist a topological disc $H_p\subseteq D_\alpha$ s.t. $P_{In},P_{Out}$ and $L$ are all subsets of $\partial H_p$ - while $\delta$ and $P_0$ are at most subsets of $\partial H_p$ (where the boundary is taken in $\overline{U_p}$ - see the illustration in Fig.\ref{fig91}). 
        \item The first-return map $f_p:\overline{H_p}\setminus\overline{\delta}\to\overline{D_\alpha}$ is continuous. Moreover, $P_0\not\in f_p(\overline{H_p})$ while $P_{In},P_{Out}$ and $L$ are all subsets of $f_p(\overline{H_p})$ (see the illustration in Fig.\ref{firstpic}).  
        \item Finally, given any curve $\gamma\subseteq{D_\alpha}$ whose closure connects $P_{In}$ and $P_0$, $f_p(\gamma\cap H_p)$ includes at least one arc whose closure connects $P_{In}$ and $\overline{l}$.
    \end{itemize}
\end{proposition}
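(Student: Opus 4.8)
The plan is to reduce every assertion of Proposition \ref{lem33} to two inputs: the invariance of the topological sphere $\partial B_\alpha=\overline{W^u_{In}}=\overline{W^s_{Out}}$, and a Shilnikov-type analysis of the flow in a neighborhood of the saddle-focus $P_{In}$. Recall that on the invariant sphere $\partial B_\alpha$ the point $P_{In}$ behaves as a source (since $\partial B_\alpha=W^u_{In}$) while $P_{Out}$ behaves as a sink (since $\partial B_\alpha=W^s_{Out}$), so the surface dynamics carries every non-fixed trajectory from $P_{In}$ to $P_{Out}$. First I would linearize $F_p$ at both fixed points, using Assumptions $2$ and $3$; this supplies local coordinates in which the one-dimensional stable direction at $P_{In}$ (containing $\Theta\subseteq W^s_{In}$), the two-dimensional unstable sheet $W^u_{In}$, the tangency locus $l_p$, and the cross-section $U_p$ all take explicit normal forms.

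For the first bullet, since $L=\partial B_\alpha\cap U_p$ lies on the invariant sphere and $U_p$ is transverse to it by Cor.\ref{TR}, the forward orbit of any point of $L$ remains on $\partial B_\alpha$; being bounded and not limiting onto a fixed point, it must re-cross $U_p$ transversely, again by Cor.\ref{TR}, and that intersection again lies on $\partial B_\alpha\cap U_p=L$. This yields $f_p(L)\subseteq L$, and running the flow backwards on $\partial B_\alpha$ gives the matching surjectivity, so $f_p(L)=L$. Continuity on $L$ follows because orbits issuing from $L$ stay on $\partial B_\alpha$, hence never approach the escape point $P_0\in\Theta$, so the implicit-function theorem applies uniformly along $L$.

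The heart of the argument is the local picture at $P_{In}$. A point of $D_\alpha$ near $P_0$ follows $\Theta$ into a neighborhood of $P_{In}$ and then spirals outward along $W^u_{In}=\partial B_\alpha$; composing the logarithmically spiralling local transition map with the global transition along $\Theta$ produces the classical Shilnikov return, which sends a neighborhood of $P_0$ onto an infinite family of spiralling strips accumulating on the image of the local stable manifold. The curves separating consecutive strips are precisely the points whose first return lands on the fold $l_p$, i.e. components of $f_p^{-1}(l)$; taking one such component gives the arc $\delta$, whose closure runs from $P_0$ to a point $\delta_0\in l$ with $\delta_0\neq P_{In}$ (the spiral does not close up at the source). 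Cutting $D_\alpha$ along $\overline{\delta}$ and retaining the side containing $L$ defines the disc $H_p$. By construction $f_p$ is continuous on $\overline{H_p}\setminus\overline{\delta}$, misses $P_0$ (nothing returns to the escape orbit), and its image sweeps from the source $P_{In}$ across $\partial B_\alpha$, covering $P_{Out}$ and $L$.

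The final bullet is the Markov-type covering property, and establishing it is the main obstacle. A curve $\gamma$ whose closure joins $P_{In}$ to $P_0$ must cross every spiralling strip, since these accumulate exactly on $P_0$; tracking the stretching forced by the expanding eigenvalues $\rho_{In}\pm i\psi_{In}$ then shows that $f_p(\gamma\cap H_p)$ contains an arc running monotonically from $P_{In}$ to the fold $\overline{l}$. The delicate point throughout is the quantitative control of the Shilnikov transition map — the logarithmic spiralling, the accumulation of the discontinuity curves on $P_0$, and the uniform expansion — so that the strips genuinely stretch across $D_\alpha$ rather than degenerating. This is exactly the content of Prop.3.8, Lemma 3.12 and Cor.3.13 in \cite{I}, on which I would rely for the sharp estimates.
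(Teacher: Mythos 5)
The paper itself offers no proof of Prop.\ref{lem33}: it is imported verbatim from Prop.3.8, Lemma 3.12 and Cor.3.13 of \cite{I}, so the only honest comparison is against the strategy of that source. Your sketch does reconstruct the right geometric picture --- the invariance of the sphere $\partial B_\alpha$ for the first bullet, and the Shilnikov transition map at the saddle-focus $P_{In}$ generating spiralling strips whose separating curves are components of $f_p^{-1}(l)$ accumulating at $P_0$ --- and since you explicitly defer the quantitative control of the spiral to \cite{I}, you end up in essentially the same place as the paper: the existence and arc-structure of $\delta$, the non-return to $P_0$, and the Markov covering property of the last bullet are asserted with the hard estimates outsourced rather than established.

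One concrete misstep worth fixing: in your argument for $f_p(L)=L$ you invoke the second bullet of Cor.\ref{TR}, whose hypothesis is that the forward trajectory is bounded and \emph{does not} limit to a fixed point. Points of $L\subseteq\partial B_\alpha=\overline{W^s_{Out}}$ do limit to $P_{Out}$ in forward time, so that corollary does not apply to them. The re-intersection with $U_p$ instead follows from the first bullet of Cor.\ref{TR}: $W^s_{Out}$ is two-dimensional and transverse to $U_p$ at the saddle-focus $P_{Out}$, so a trajectory spiralling into $P_{Out}$ along $\partial B_\alpha$ crosses $U_p$ transversely infinitely many times. This is exactly the argument the paper itself deploys later (for the set $W'$ in the proof of Lemma \ref{corhp}), and substituting it repairs your first bullet; the remaining bullets stand or fall with the Shilnikov estimates you have already delegated to \cite{I}.
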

\begin{figure}[h]
\centering
\begin{overpic}[width=0.4\textwidth]{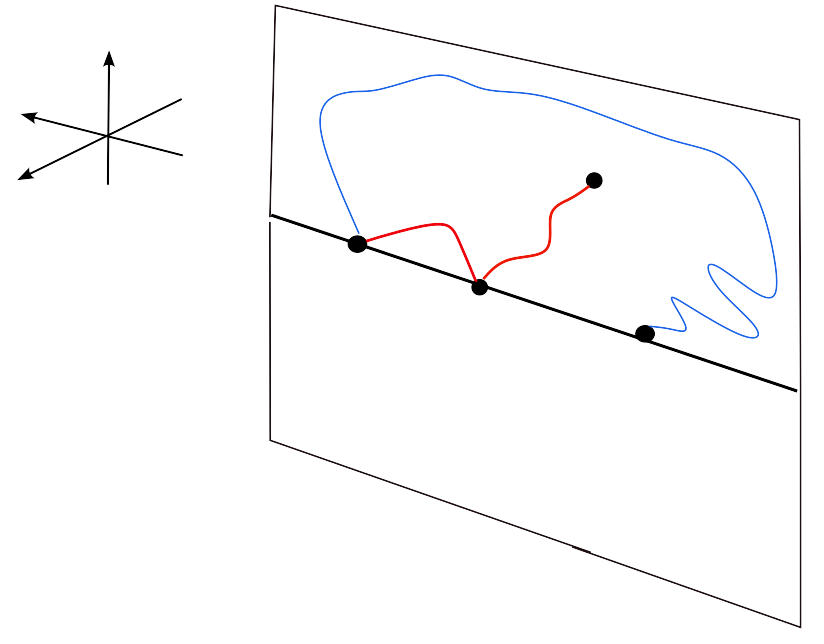}
\put(750,320){$P_{In}$}
\put(380,440){$P_{Out}$}
\put(630,500){$\delta$}
\put(580,370){$\delta_0$}
\put(800,630){$L$}
\put(350,720){$U_p$}
\put(460,620){$H_p$}
\put(650,580){$P_0$}
\put(350,265){$L_p$}
\put(-5,640){$x$}
\put(-5,535){$y$}
\put(125,735){$z$}
\end{overpic}
\caption[Case $B$.]{\textit{The cross-section $H_p$ - in this scenario, $f^{-1}_p(l)$ sketched as the red curve.}}
\label{fig91}
\end{figure}
Using the results listed in Prop.\ref{boundball} and Prop.\ref{lem33}, it was proven in \cite{I} the dynamics of the Rössler system at trefoil parameters are chaotic in the following sense: there exists a countable collection of periodic trajectories for the flow inside the topological ball $B_\alpha$. In more detail, the following result was proven: 
\begin{claim}
    \label{th31}
    Let $p\in P$ be a trefoil parameter for the Rössler system, and denote by $\sigma:\{1,2\}^\mathbf{N}\to\{1,2\}^\mathbf{N}$ the one-sided shift. Then, there exists a bounded, infinite, collection of periodic trajectories for the flow inside the topological ball $B_\alpha$. In more detail, consider the first-return map $f_p:\overline{H_p}\setminus\overline{\delta}\to\overline{D_\alpha}$ given by Prop.\ref{lem33} - then, there exists an $f_p$-invariant set $Q\subseteq\overline{H_p}\setminus\delta$ s.t. the following is satisfied:
    \begin{itemize}
        \item $f_p$ is continuous on on $Q$.
        \item There exists a continuous $\pi:Q\to\{1,2\}^\mathbf{N}$ s.t. $\pi\circ f_p=\sigma\circ\pi$. Moreover, $\pi(Q)$ includes every periodic sequence in $\{1,2\}^\mathbf{N}$.
        \item Given any periodic $s\in\{1,2\}^\mathbf{N}$ of minimal period $k$ s.t. $s$ is not the constant $\{1,1,1...\}$, $\pi^{-1}(s)=D_s$ is nonempty and connected. Moreover, $D_s$ includes $x_s$, a periodic point for $f_p$ of minimal period $k$.
    \end{itemize}
\end{claim}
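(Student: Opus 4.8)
The plan is to run a topological--horseshoe construction on the punctured disc $\overline{D_\alpha}\setminus\{P_0\}$, using the crossing properties recorded in Prop.\ref{lem33} as a substitute for the usual Conley--Moser stretching conditions. First I would use the arc $\overline\delta$, which by Prop.\ref{lem33} joins $P_0$ to a point $\delta_0\in l$ with $\delta_0\ne P_{In}$, as a crosscut separating $\overline{H_p}$ into two closed sub-discs $H_p^1,H_p^2$, each of which still carries $P_{In}$ and a sub-arc of $L$ on its boundary. These are the natural symbol cells, since $f_p$ is continuous on $\overline{H_p}\setminus\overline\delta$ but generically discontinuous across $\overline\delta=\overline{f_p^{-1}(l)}\cap\overline{H_p}$. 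I define the coding $c:\overline{H_p}\setminus\overline\delta\to\{1,2\}$ by $c(x)=i$ when $x\in\mathrm{int}(H_p^i)$, set the invariant set $Q=\{x\in\overline{H_p}\setminus\overline\delta : f_p^n(x)\in\overline{H_p}\setminus\overline\delta \text{ for all } n\ge 0\}$, and let $\pi(x)=(c(x),c(f_p(x)),c(f_p^2(x)),\dots)$. By construction $\pi\circ f_p=\sigma\circ\pi$; moreover $f_p$ is continuous on $Q$ because $Q$ avoids $\overline\delta$, and $\pi$ is continuous on $Q$ since the product topology only tests finitely many coordinates at once and $f_p(L)=L$ keeps the coding well behaved on the invariant boundary arc. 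This part is essentially bookkeeping.

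The heart of the argument is a covering relation asserting that each cell stretches across both cells. Call a curve in $H_p^i$ whose closure joins $P_{In}$ to $P_0$ a \emph{spoke}. By the last item of Prop.\ref{lem33}, the image of a spoke contains an arc whose closure joins $P_{In}$ to $\overline l$; because $\overline\delta$ separates $H_p^1$ from $H_p^2$ and terminates on $l$ at $\delta_0\ne P_{In}$, any such image arc must run across \emph{both} cells, and hence contains inside each cell a sub-arc that is again the carrier of a spoke. This is exactly the ``every symbol cell maps over every symbol cell'' property that drives a full two-shift. Iterating and pulling back through $f_p^{-1}$, for each finite word $w=(w_0,\dots,w_{n-1})$ the spokes with itinerary $w$ form a nested family of compact connected sub-spokes; intersecting over $n\to\infty$ shows $D_s=\pi^{-1}(s)$ is a nested intersection of continua, hence nonempty and connected, for every $s\in\{1,2\}^{\mathbf{N}}$. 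In particular every periodic sequence is realized, giving the second bullet and the nonemptiness/connectedness in the third.

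It remains to produce, for each periodic $s$ of minimal period $k$ other than the constant word $\{1,1,1,\dots\}$, a periodic point $x_s\in D_s$ of minimal period $k$. Here I would apply $f_p^k$ to the spoke-continuum associated with $(s_0,\dots,s_{k-1})$: by the covering relation it maps this continuum back across itself from the $P_{In}$-corner to $\overline l$, so that the entrance and exit boundaries are correctly aligned. A topological fixed-point argument for such a self-covering (a crossing/connecting-continuum argument, or equivalently a correctly aligned window argument, reducing to Brouwer on a suitable sub-disc on which $f_p^k$ is continuous) then yields a fixed point of $f_p^k$ lying in $D_s$. Minimality of the period follows since the itinerary of $x_s$ equals $s$ while $\sigma^j(s)\ne s$ for $0<j<k$; the constant word is excluded precisely because its only invariant carrier degenerates onto the flow-fixed point $P_{In}$, which is not a transverse periodic point. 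Finally, suspending $f_p$ lifts each $x_s$ to a genuine periodic trajectory of $F_p$ inside $B_\alpha$, delivering the claimed infinite family.

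The step I expect to be the main obstacle is this fixed-point conclusion. Because $f_p$ is only defined and continuous on the punctured, non-compact piece $\overline{H_p}\setminus\overline\delta$ and its image spills outside $H_p$, a naive Brouwer application is unavailable, so I must first isolate a genuinely $f_p^k$-invariant compact connector (a correctly aligned window) on which the map is continuous and self-covering, and then check that the stretching from $P_{In}$ to $\overline l$ forces a transverse interior fixed point rather than a spurious boundary one. Controlling the \emph{connectedness} of each $D_s$ under the infinite intersection, in the presence of the discontinuity across $\overline\delta$ and the puncture at $P_0$, is the other delicate point, and is where the precise geometry of $\delta$, $L$ and $H_p$ from Prop.\ref{lem33} will have to be used carefully.
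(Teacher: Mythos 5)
There is a genuine gap, and it sits at the foundation of your construction: the choice of symbol cells. You propose to use $\overline\delta$ as a crosscut separating $\overline{H_p}$ into two sub-discs $H_p^1,H_p^2$. But Prop.\ref{lem33} only guarantees that $\delta$ and $P_0$ are \emph{at most} subsets of $\partial H_p$ --- the arc $\overline\delta$ never enters the interior of $H_p$, and in the configuration depicted in Fig.\ref{firstpic} one even has $\overline\delta\cap\overline{H_p}=\emptyset$. So $\overline\delta$ does not separate $H_p$ into two cells, and the coding $c$ is not defined. The discontinuity locus $\overline\delta$ is what you must \emph{excise} from the domain, not what generates the symbols. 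The two symbols in the intended argument come instead from the fact that the folded image $f_p(\overline{H_p})$ crosses $\overline{H_p}$ in (at least) two disjoint strips, i.e.\ from two components of $\overline{H_p}\cap f_p^{-1}(\overline{H_p})$ each stretching fully across $\overline{H_p}$ --- exactly the structure exploited for $g_p$ in Prop.\ref{doubla} of this paper. Your covering relation inherits the same defect: the last item of Prop.\ref{lem33} produces an image arc whose closure joins $P_{In}$ to $\overline l$, but such an arc need not meet $\overline\delta$ at all (it can land on $l$ on the $P_{In}$ side of $\delta_0$), and it need not lie in $H_p$, so the conclusion that it ``runs across both cells'' does not follow. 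Since the full two-shift, the connectedness of $D_s$, and the fixed-point step all rest on this covering relation, the argument as written does not go through.

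For comparison, the paper does not reprove Th.\ref{th31} at all: it cites Th.3.15 of \cite{I}, whose strategy is to isotope $f_p:\overline{H_p}\setminus\overline\delta\to\overline{D_\alpha}$ to a Smale horseshoe map using the Bestvina--Handel algorithm, with the unremovability and non-collapsing of periodic orbits (hence the minimal-period and connectedness statements) coming from isotopy stability as in Cor.\ref{deformation11}, rather than from a direct Conley--Moser window argument. A direct topological-horseshoe proof along your lines is not hopeless, but it would have to start from the two strips of $\overline{H_p}\cap f_p^{-1}(\overline{H_p})$ as symbol cells and verify the stretching of each strip across both, which is precisely the geometric input your proposal leaves unestablished.
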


\begin{figure}[h]
\centering
\begin{overpic}[width=0.3\textwidth]{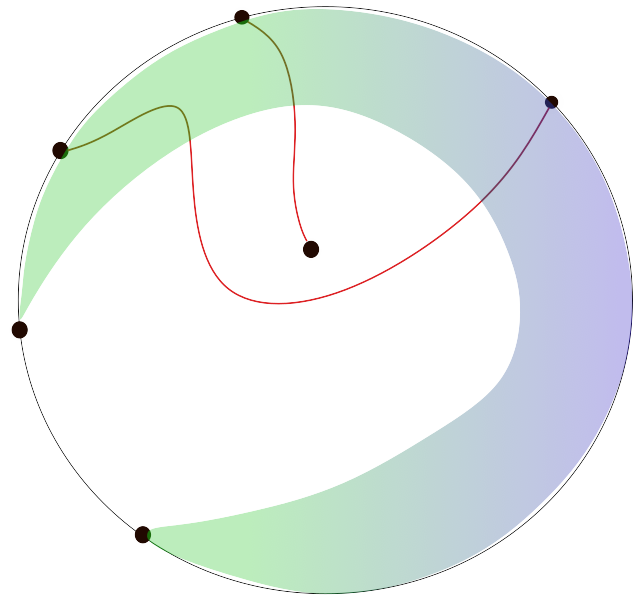}
\put(200,-10){$P_{In}$}
\put(520,550){$P_{0}$}
\put(400,350){$H_p$}
\put(300,430){$\rho$}
\put(600,350){$f_p(l_1)$}
\put(0,750){$r_0$}
\put(-40,390){$r_1$}
\put(30,250){$l$}
\put(350,820){$\delta_0$}
\put(590,850){$f_p(\rho)$}
\put(870,760){$P_{Out}$}
\put(990,380){$L=f_p(L)$}
\end{overpic}
\caption[The first-return map in Case $A$.]{\textit{The cross-section $D_\alpha$ sketched as a disc, where $H_p$ is the region trapped between $\rho,L$ and the arc on $l$ connecting $r_0,P_{In}$. $f_p(H_p)$ imposed on $H_p$ as the crescent-like figure - as can be seen, in this scenario $\overline\delta\cap\overline{H_p}=\emptyset$. $l_1$ is the arc connecting $P_{In}$ and $r_0$ in $\partial H_p$, while $f_p(l_1)$ is the arc connecting $P_{In}$ and $r_1=f_p(r_0)$ (note $f_p(L)=L$).}}
\label{firstpic}
\end{figure}

For a proof, see Th.3.15 in \cite{I}. The idea of the proof is fairly simple, and ultimately based on isotoping the first-return map $f_p:\overline{H_p}\setminus\delta\to\overline{D_\alpha}$ to a Smale Horseshoe map by utilizing the Betsvina-Handel algorithm (see \cite{BeH}, and the illustration in Fig.\ref{remove2}). This argument allows one to prove the following fact, a corollary of Th.\ref{th31} which will be utilized throughout this paper (see Cor.3.27 in \cite{I}) - let $g:\overline{H_p}\setminus\overline{\delta}\to\overline{D_\alpha}$ be a homeomorphism isotopic to $f_p:\overline{H_p}\setminus\overline{\delta}\to \overline{D_\alpha}$, s.t. the following is satisfied (see the illustration in Fig.\ref{remove2}):

\begin{itemize}
\item $g(P_{In})=P_{In}$, $g(P_{Out})=P_{Out}$ and $g(L)=L$.
    \item  Whenever $P_0\in\partial H_p$, we have  $\lim_{s\to P_0}g(s)=P_{In}$. 
    \item $P_0\not\in\overline g(\overline{H_p}\setminus\delta)$.
\end{itemize}

Then, we have the following result:

\begin{corollary}
\label{deformation11}    Let $p\in P$ be a trefoil parameter, and let $g:\overline{H_p}\setminus\delta\to\overline{D_\alpha}$ be as above. Then, if $s\in\{1,2\}^\mathbf{N}$ is periodic of minimal period $k$, we have the following:

    \begin{itemize}
        \item There exists $x_s\in D_s$, periodic of minimal period $k$, which is continuously deformed to $y_s$, a periodic orbit of minimal period $k$ for $g$. In particular, $y_s$ is in the invariant set of $g$ in $\overline{H_p}\setminus\overline{\delta}$. That is, the periodic dynamics in $Q$ are unremovable.
        \item Let $s,\omega\in\{1,2\}^\mathbf{N}$ be periodic orbits of minimal periods $k_1$ and $k_2$, and let $x_1\in D_s$ and $x_2\in D_\omega$ be periodic for $f_p$ of minimal periods $k_1$ and $k_2$. If the isotopy deforms $x_i$ to $y_i$, $i=1,2$, then $y_1=y_2$ if and only if $s=\omega$ - that is, the periodic dynamics are also uncollapsible. 
        \item Consequentially, there exists an invariant set $Q'\subseteq \overline{H_p}\setminus\overline\delta$ and a continuous $\phi:Q'\to\{1,2\}^\mathbf{N}$ s.t. $\phi\circ g=\sigma\circ \phi$. Moreover, $\phi(Q')$ includes every periodic symbol in $\{1,2\}^\mathbf{N}$, and $Q'$ includes periodic points of all minimal periods for $g$.
    \end{itemize}
\end{corollary}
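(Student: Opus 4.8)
The plan is to treat the statement as an instance of braid-type forcing in Nielsen--Thurston theory, building on the horseshoe reduction already used to prove Th.\ref{th31}. Recall that in \cite{I} the first-return map $f_p\colon\overline{H_p}\setminus\overline\delta\to\overline{D_\alpha}$ was shown, via the Bestvina--Handel algorithm \cite{BeH}, to be isotopic to a Smale horseshoe map on the punctured disc $\overline{D_\alpha}\setminus\{P_0\}$, with $P_{In},P_{Out}$ and the puncture $P_0$ serving as the marked set. The homeomorphism $g$ of the statement satisfies exactly the normalizations (it fixes $P_{In},P_{Out}$, preserves $L$, sends $s\to P_0$ to $P_{In}$ in the limit, and omits $P_0$ from its image) that place it in this same mapping class relative to the marked set. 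The engine of the argument is then the isotopy invariance of essential Nielsen periodic-point classes: the periodic orbits produced for $f_p$ occupy essential, irreducibly period-$k$ classes, and such classes persist -- together with their minimal period -- in every homeomorphism isotopic to $f_p$, in particular in $g$. This is precisely the forcing phenomenon surveyed in \cite{Bo} and the global shadowing of \cite{Han}.

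For the first bullet (unremovability) I would proceed as follows. The period-$k$ periodic point $x_s\in D_s$ supplied by Th.\ref{th31} lies in the Nielsen periodic-point class of $f_p^{\,k}$ labelled by the itinerary $s$, and the horseshoe computation of \cite{I} exhibits this class as essential with nonzero fixed-point index; moreover it is irreducible, i.e. the $k$ points of the orbit occupy $k$ distinct classes cyclically permuted by $f_p$. Because essential classes are carried bijectively to one another along any isotopy, tracking the class of $x_s$ through an isotopy joining $f_p$ to $g$ produces a continuous deformation of $x_s$ onto a $g$-periodic point $y_s$ lying in the corresponding class and belonging to the maximal $g$-invariant subset of $\overline{H_p}\setminus\overline\delta$. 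The minimal period is preserved because a proper divisor $d\mid k$ would force the itinerary of $y_s$ to have period $d$, contradicting the fact that the $k$ classes remain distinct along the isotopy. This is the global shadowing of \cite{Han} specialized to the horseshoe, and it shows the orbits of $Q$ cannot be destroyed by passing to $g$.

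For the second bullet (uncollapsibility) I would use that the itinerary is a faithful label of these essential classes. If $s\ne\omega$, then $x_1\in D_s$ and $x_2\in D_\omega$ sit in distinct essential classes of the relevant iterate of $f_p$; since the isotopy to $g$ is a bijection on essential classes, the deformed orbits $y_1,y_2$ lie in distinct classes for $g$, whence $y_1\ne y_2$. Conversely, if $s=\omega$ then $x_1$ and $x_2$ share the itinerary $s$, and since a periodic forward itinerary of period $k$ determines a \emph{unique} periodic orbit of the horseshoe model (its two-sided extension being forced), both deform into the same $y_s$, so $y_1=y_2$. Combined with the first bullet this yields the claimed equivalence $y_1=y_2\iff s=\omega$.

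Finally, for the third bullet I would assemble the semi-conjugacy for $g$ directly from its own horseshoe coding: take $Q'$ to be the maximal $g$-invariant subset of $\overline{H_p}\setminus\overline\delta$ and $\phi\colon Q'\to\{1,2\}^{\mathbf{N}}$ the forward-itinerary map, so that $\phi\circ g=\sigma\circ\phi$ holds by definition, $\phi(Q')$ contains every periodic sequence (each realized by the $y_s$ of the first bullet), and $Q'$ contains periodic points of all minimal periods. The main obstacle, and where the real care must go, is justifying that the symbolic coding genuinely labels essential, irreducible Nielsen periodic-point classes that survive the isotopy with exact control of the period, and -- logically prior to this -- checking that the puncture at $P_0$ together with the degenerate boundary behavior $\lim_{s\to P_0}g(s)=P_{In}$ does not obstruct the Nielsen--Thurston theory on $\overline{D_\alpha}\setminus\{P_0\}$. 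The three normalizing hypotheses on $g$ are imposed precisely to remove this difficulty, after which the forcing results of \cite{Bo}, \cite{Han} and the horseshoe structure of \cite{I} apply directly.
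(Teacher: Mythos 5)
Your proposal is correct and follows essentially the same route as the paper: the paper defers this corollary to Cor.3.27 of \cite{I}, describing its proof as isotoping $f_p:\overline{H_p}\setminus\overline{\delta}\to\overline{D_\alpha}$ to a Smale horseshoe via the Bestvina--Handel algorithm and invoking the isotopy stability (unremovability and uncollapsibility) of the resulting periodic orbits --- which is exactly the essential-Nielsen-class / global-shadowing argument of \cite{Bo} and \cite{Han} that you reconstruct. Your closing caveat about the puncture at $P_0$ and the boundary behaviour of $g$ is precisely what the three normalizing hypotheses on $g$ are there to handle, as the paper also indicates.
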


\begin{remark}
   The function $g$ from Cor.\ref{deformation11} need not necessarily be a first-return map for a flow. That is, it need only be a homeomorphism of $\overline{H_p}\setminus\delta$.
\end{remark}
\begin{figure}[h]
\centering
\begin{overpic}[width=0.6\textwidth]{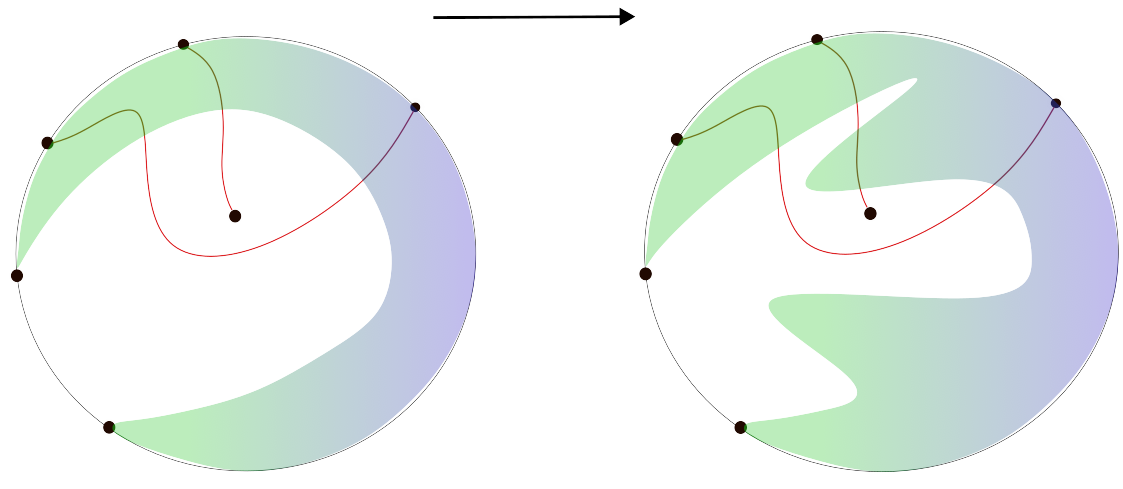}
\put(50,20){$P_{In}$}
\put(230,250){$P_{0}$}
\put(160,290){$\delta$}
\put(-10,300){$r_0$}
\put(545,300){$r_0$}
\put(-100,200){$f_p(r_0)$}
\put(480,200){$g(r_0)$}
\put(380,350){$P_{Out}$}
\put(720,230){$P_0$}
\put(730,300){$\delta$}
\put(600,20){$P_{In}$}
\put(180,150){$H_p$}
\put(420,150){$L$}
\put(-0,150){$l$}
\put(650,190){$H_p$}
\put(1000,150){$L$}
\put(560,150){$l$}
\put(960,360){$P_{Out}$}
\put(790,100){$g(H_p)$}
\put(200,350){$f_p(H_p)$}
\end{overpic}
\caption[An isotopy which preserves $Q$.]{\textit{Since $g$ (on the right) and the first-return map $f_p$ (on the left) are isotopic and $g$ satisfies the assumptions of Cor.\ref{deformation11}, the periodic orbits in $Q$ all persist when $f_p$ is isotoped to $g$.}}
\label{remove2}
\end{figure}
\subsection{Template Theory and the dynamical complexity of three-dimensional flows.}
In addition to the results from \cite{I} listed above, to prove Th.\ref{geometric}, we will also need to several notions from Knot Theory - in particular, we will need the notion of a Template, and its connection to periodic trajectories of flows via the Birman-Williams Theorem (see below). Therefore, we conclude Section 2 with a brief introduction to Template Theory. To begin, let us first recall the notion of a knot:

\begin{definition}
    \label{torusknot} A \textbf{knot} is an isotopy class of an embedding of the circle $S^1$ into $\mathbf{R}^3$. We say two knots in $S^3$, $T_1$ and $T_2$ \textbf{have the same knot type}, provided there exists an isotopy of $\mathbf{R}^3$ which continuously deforms $T_1$ to $T_2$ (see the illustration in Fig.\ref{type}). A knot is said to be a \textbf{Torus Knot} provided it can be looped on a two-dimensional Torus.
\end{definition}

\begin{figure}[h]
\centering
\begin{overpic}[width=0.3\textwidth]{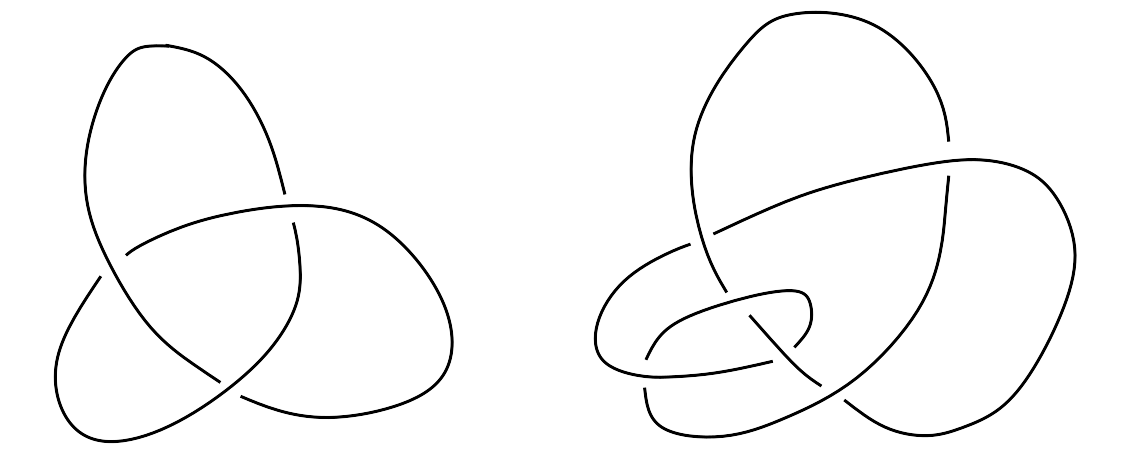}
\end{overpic}
\caption[Two knot types.]{\textit{Two knots, belonging to different knot types. The knot on the left has the same type as the trefoil knot.}}
\label{type}
\end{figure}

It is immediate that given any smooth vector field $F$ on $\mathbf{R}^3$, every periodic trajectory for $\dot{s}=F(s)$ is a knot. It is well-known in the theory of topological dynamics of surface homeomorphisms that one can classify and study surface homeomorphisms (up to isotopy) based on their periodic orbits (for a survey of these results, see \cite{Bo}). Since in this paper we are primarily interested in studying periodic trajectories for flows, let us now introduce the following definition:
\begin{definition}
    \label{dyncomp} Let $F$ be a smooth vector field on $\mathbf{R}^3$ - then, we define its \textbf{dynamical complexity} as the collection of knot types $F$ generates as periodic trajectories (which are not fixed points).
\end{definition}
To see why this definition makes sense, consider two smooth vector fields $F_1$ and $F_2$. If the dynamical complexity of $F_1$ includes different knot types than those in $F_2$, the dynamics of $F_1$ and $F_2$ cannot generate orbitally equivalent flows. As such, provided we can answer which knot types are generated by a given vector field $F$ as periodic trajectories, we essentially answer what makes its dynamics unique.\\

Unfortunately, the problem of classifying the dynamical complexity cannot be solved in full generality for any given smooth vector field on $\mathbf{R}^3$. However, provided we impose a few restriction on the dynamics of the vector field, this question can be answered - an answer we review below. To begin, we first introduce the notion of hyperbolicity for vector fields:

\begin{definition}
    \label{hyperbolicvector} Let $M$ be a smooth $3-$manifold, let $F$ be a $C^\infty$ vector field on $M$, and denote by $\phi_t$ the resulting flow on $M$. An invariant set $\Lambda$ w.r.t. the flow is said to \textbf{have a} \textbf{hyperbolic structure on $\Lambda$} or in short, \textbf{hyperbolic on $\Lambda$}, provided at every $x\in\Lambda$ one can split the tangent space, $T_xM=E^s_x\oplus E^u_x\oplus E^c(x)$ s.t. the following is satisfied:
    \begin{itemize}
        \item $E^c(x)$ is spanned by $F(x)$.
        \item $E^s_x,E^u_x$ and $E^c_x$ vary continuously to $E^s_{\phi_t(x)},E^u_{\phi_t(x)}$ and $E^c_{\phi_t(x)}$ (respectively) as $x$ flows to $\phi_t(x)$, $t\in\mathbf{R}$ - in particular, $D_{\phi_t}(x)E^j_x=E^j_{\phi_t(x)}$ where $j\in\{s,u,c\}$ and $D_{\phi_t}(x)$ denotes the differential of the flow at time $t$.
        \item There exist constants $C>0$ and $\lambda>1$ s.t. for every $t>0$ and every $x\in\Lambda$ we have: 
        
        \begin{enumerate}
            \item For $v\in E^s_x$, $||D_{\phi_t}(x)v||<Ce^{-\lambda t}||v||$.
            \item For $v\in E^u_x$, $||D_{\phi_t}(x)v||>Ce^{\lambda t}||v||$. 
        \end{enumerate}
    \end{itemize}

In other words, the flow expands (or stretches) in one direction along the trajectories on $\Lambda$, and contracts in another direction. 
\end{definition}
We will also need the following definition:

\begin{definition}
    \label{nonwanderchain}
    Let $M$ be a smooth $3-$manifold, let $F$ be a $C^\infty$ vector field on $M$, and denote by $\phi_t$ the resulting flow on $M$. We define the \textbf{non-wandering set in $M$}, as the maximal invariant set of $\phi_t$ in $M$.
    \end{definition}
When $M$ is a manifold with a boundary and trajectories can escape $M$ in finite time under the flow, the non-wandering set includes the maximal set of initial conditions which never escape $M$ (in particular, it includes every periodic trajectory in $M$).\\

\begin{figure}[h]
\centering
\begin{overpic}[width=0.3\textwidth]{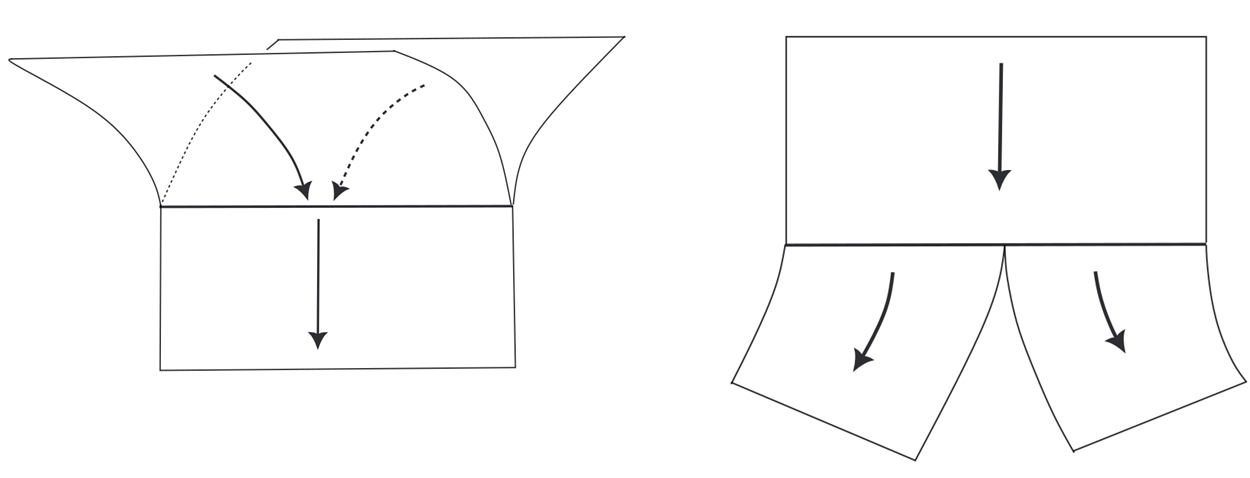}
\end{overpic}
\caption[Joining and splitting charts.]{\textit{A joining chart on the left, and a splitting chart on the right.}}
\label{TEMP1}
\end{figure}

As it turns out, given a $3-$manifold $M$ endowed with a flow $\phi_t$ hyperbolic on its non-wandering set in $M$, one can describe the dynamical complexity of $\phi_t$ in $M$ - i.e., we can answer the question which knot types $\phi_t$ can and cannot generate as periodic trajectories. This is done by reducing the flow to an object called a Template, per the Birman and Williams Theorem (see Th.\ref{BIRW} below). We first define:

\begin{definition}\label{deftemp}
    A \textbf{Template} is a compact branched surface with a boundary endowed with a smooth expansive semiflow, built locally from two types of charts - \textbf{joining} and \textbf{splitting} (see the illustration in Fig.\ref{TEMP1}). Additionally, the gluing maps between charts all respect the semiflow, and act linearily on the edges (see Fig.\ref{TEMP2} for illustrations).
\end{definition}

\begin{figure}[h]
\centering
\begin{overpic}[width=0.2\textwidth]{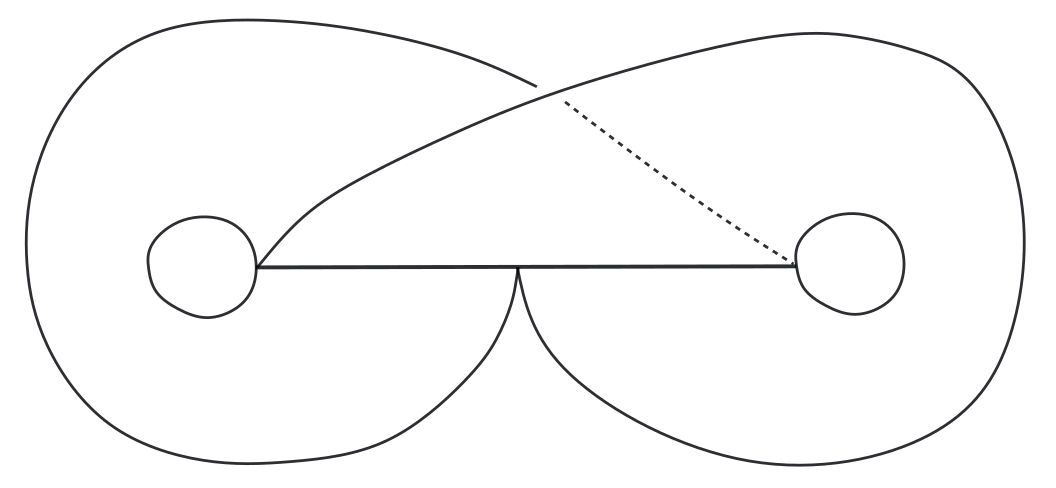}
\end{overpic}
\caption[A Template.]{\textit{A Template.}}
\label{TEMP2}
\end{figure}

Now, let $M$ be a three-manifold, and let $\phi_t$ be flow with a hyperbolic non-wandering set $\Lambda\subseteq M$ ( for example, consider a flow generated by suspending a Smale Horseshoe). Moreover, let $P(\Lambda)$ denote the periodic trajectories for $\phi_t$ in $\Lambda$. Then, under these assumptions, we have the following result:
\begin{claim}{\textbf{The Birman-Williams Theorem} -}\label{BIRW}
    Given $M,\phi_t,\Lambda$ as above, there exists a unique Template $T$, embedded in $M$ and endowed with a smooth semiflow $\psi_t$. Moreover, letting $P(T)$ denote the periodic trajectories of $\psi_t$ in $T$, there exists an injective projection $f:P(\Lambda)\to P(T)$ s.t. $P(T)\setminus f(P(\Lambda))$ contains at most two periodic trajectories for $\psi_t$. Moreover, $T=\overline{P(T)}$.
\end{claim}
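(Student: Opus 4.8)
The plan is to reproduce the collapsing construction of Birman and Williams, whose guiding idea is to quotient $\Lambda$ along its strongly contracting direction and check that the result is a template carrying essentially the same periodic data. First I would extract from the hyperbolic splitting $T_xM=E^s_x\oplus E^u_x\oplus E^c_x$ of Def.\ref{hyperbolicvector} the strong stable manifolds: standard hyperbolic theory (stable manifold theorem, local product structure) furnishes at each $x\in\Lambda$ a one-dimensional strong stable leaf $W^{ss}(x)$ tangent to $E^s_x$ and characterized dynamically by $W^{ss}(x)=\{y: d(\phi_t x,\phi_t y)\to 0 \text{ as } t\to+\infty\}$, and these leaves depend continuously on $x$, foliating $\Lambda$. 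I would then define the equivalence relation $x\sim y\iff y\in W^{ss}(x)$, set $T:=\Lambda/{\sim}$ with quotient map $q:\Lambda\to T$, and take $\psi_t$ to be the induced map $q\circ\phi_t\circ q^{-1}$.

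Next I would verify that $\psi_t$ is a well-defined \emph{semi}flow and that $T$ has the required local structure. Since $\phi_t(W^{ss}(x))=W^{ss}(\phi_t x)$ for $t\ge 0$, the relation $\sim$ is preserved under forward time, so $\psi_t$ descends; it fails to be a flow precisely at the loci where two distinct strong stable leaves are identified under $q$, which is where backward uniqueness breaks down. Expansivity of $\psi_t$ is inherited from the uniform expansion along $E^u_x$ guaranteed by Def.\ref{hyperbolicvector}. For the branched structure, the local product structure presents a flow box around a point of $\Lambda$ as (stable interval)$\times$(unstable interval)$\times$(flow interval); collapsing the stable interval leaves a smooth two-dimensional patch (unstable $\times$ flow), and where two such collapsed boxes meet one recovers exactly the joining and splitting charts of Def.\ref{deftemp}. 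One then checks the gluing maps respect the semiflow and act linearly on the edges, so that $T$ is a compact branched surface with boundary.

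The embedding and the periodic-orbit correspondence are handled together. Because each leaf $W^{ss}(x)$ is a contractible arc, the collapse $q$ can be realized, within a tubular neighborhood of $\Lambda$, as the time-one map of an ambient isotopy $h_s$ of $M$ that shrinks each stable leaf onto its core; the image $h_1(\Lambda)$ is an embedded copy of $T$. Each periodic orbit $\gamma$ of $\phi_t$ is a single $\phi_t$-orbit, hence $q(\gamma)$ is a periodic orbit of $\psi_t$, and I set $f(\gamma)=q(\gamma)$; since $q$ is realized by an ambient isotopy, $f(\gamma)$ has the same knot type as $\gamma$, and two orbits collapse together only if they share a strong stable leaf, a degenerate coincidence that a boundary analysis confines to the orbits running along $\partial T$. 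This gives injectivity of $f$ and, by the same count, bounds $P(T)\setminus f(P(\Lambda))$ by the (at most two) boundary orbits of the template. Finally $T=\overline{P(T)}$ follows because periodic orbits are dense in a hyperbolic non-wandering set (spectral decomposition and shadowing) and density is preserved by the continuous surjection $q$, while uniqueness of $T$ follows from the fact that the strong stable foliation, and hence $\sim$, is canonically determined by the hyperbolic structure.

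The hard part will be twofold. First, proving that the quotient is a genuine branched surface with the chart structure of Def.\ref{deftemp}, rather than a more singular space, requires the full force of the local product structure together with a careful description of how the collapsed unstable sheets meet along the branch locus. Second, controlling injectivity — showing the collapse identifies at most two periodic orbits and creates no more than two spurious boundary orbits — genuinely rests on the contractibility of the stable leaves and on a precise analysis of the boundary of $T$; this is where the exceptional ``at most two'' orbits in the statement actually originate, and it is the step I expect to demand the most care.
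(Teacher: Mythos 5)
The paper does not actually prove this statement: it is the Birman--Williams Theorem quoted as a black box, with the proof delegated to \cite{BW} and Th.2.2.4 of \cite{KNOTBOOK}. Your outline is precisely the standard collapsing argument from those references (quotient $\Lambda$ along the strong stable foliation, use the local product structure to verify the joining/splitting chart structure, realize the collapse by an ambient isotopy so knot types are preserved, and trace the at-most-two exceptional orbits to the boundary of the branched surface), so it takes essentially the same approach as the paper's source and is sound as a sketch.
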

For a proof, see either \cite{BW} or Th.2.2.4 in \cite{KNOTBOOK}. In other words, the Birman-Williams Theorem states that whenever we have a flow hyperbolic on some invariant set $\Lambda$, the dynamical complexity of the flow on $\Lambda$ is encoded by some Template, which dictates which knot types exist in $\Lambda$ (save possibly for two).
\section{Constructing a dynamically minimal flow}

In this section we prove Th.\ref{deform} and Th.\ref{geometric} - namely, in this section we prove the dynamics of the Rössler system at trefoil parameters can be reduced to those of an explicit flow, hyperbolic on its non-wandering set. We do so by proving Th.\ref{deform} - whose proof would consist this entire section.\\

The idea behind the proof is as follows - using Th.\ref{th31} we construct an explicit, smooth deformation of the vector field $F_p$ to $G$ - a smooth vector field on $\mathbf{R}^3$, hyperbolic on its non-wandering set. As will be made clear, the flow generated by $G$ does not contain "extra"-dynamical information - or, more precisely, every periodic trajectory for $G$ (save perhaps for one), can be deformed by an isotopy of $\mathbf{R}^3$ to a periodic trajectory for $F_p$.

\begin{theorem}
\label{deform}    Let $p\in P$ be a trefoil parameter for the Rössler system, and recall we denote the corresponding vector field by $F_p$. Then, there exists a smooth vector field on $S^3$, $G$, s.t. the following is satisfied: 
    \begin{itemize}

        \item The vector field $F_p$ can be smoothly deformed to $G$ in $\mathbf{R}^3$.
        \item $G$ has a first-return map which is a Smale Horseshoe map.
        \item The knot-types for the periodic trajectories generated by $F_p$ include every knot type generated by $G$ as a periodic trajectory (save possibly for two knot-types).
    \end{itemize}
\end{theorem}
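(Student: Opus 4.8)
The plan is to realize the isotopy between the first-return map $f_p$ and a Smale Horseshoe map — furnished by Cor.\ref{deformation11} — as a smooth one-parameter deformation of vector fields $\{F_t\}_{t\in[0,1]}$ on $S^3$ with $F_0=F_p$, and then set $G=F_1$. First I would fix the geometric scaffolding: by Prop.\ref{boundball} and Prop.\ref{lem33} the recurrent dynamics of $F_p$ are organized by the partial first-return map $f_p:\overline{H_p}\setminus\overline\delta\to\overline{D_\alpha}$ on the cross-section $U_p$, and by Th.\ref{th31} every periodic trajectory inside $B_\alpha$ is detected by this map. I would thicken $U_p$ to a flow box $N\cong U_p\times(-\varepsilon,\varepsilon)$ transverse to $F_p$, chosen small enough to avoid the fixed points $P_{In},P_{Out}$ and the point at infinity, so that the holonomy of $F_p$ across $N$ recovers $f_p$.

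Next I would invoke Cor.\ref{deformation11} to obtain a homeomorphism $g$ isotopic to $f_p$ which, after smoothing, is a Smale Horseshoe map whose invariant set $Q'$ carries the full periodic symbolic dynamics of the shift $\sigma$ on $\{1,2\}^{\mathbf{N}}$. Writing $h_s$, $s\in[0,1]$, for a smooth isotopy with $h_0=f_p$ and $h_1=g$, I would set $\psi_s=h_s\circ f_p^{-1}$, so that $\psi_0=\mathrm{id}$ and $\psi_1=g\circ f_p^{-1}$. I would then modify $F_p$ only inside $N$ by adding a smooth, compactly supported vector field tangent to the cross-section fibers $U_p\times\{s\}$ whose accumulated holonomy across $N$ realizes $\psi_1$, so that the new first-return map becomes $\psi_1\circ f_p=g$. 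This yields a smooth family $F_t$ supported in $N$, equal to $F_p$ near the fixed points and at infinity, with $G=F_1$ having first-return map exactly the horseshoe $g$. Since $g$ is a Smale Horseshoe, the suspension of its invariant set is a hyperbolic invariant set for $G$; combined with the hyperbolic saddle-foci and the absence of other recurrence inside a trapping region built from $B_\alpha$, the non-wandering set of $G$ is hyperbolic, establishing the first two bullets.

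For the third bullet I would use that $\{F_t\}$ is an isotopy of flows on $S^3$: the time-reversed deformation carries each periodic trajectory of $G$ to a periodic trajectory of $F_p$ by an ambient isotopy of $\mathbf{R}^3$, hence preserving knot type. Every periodic orbit of $g$ lying in $Q'$ is matched in this way by Cor.\ref{deformation11} (the dynamics there are unremovable and uncollapsible), and the only periodic orbits of $G$ that may fail to be matched are the two boundary orbits of the Birman–Williams template associated to $G$ (Th.\ref{BIRW}) — equivalently, the orbit excluded in Th.\ref{th31} as the constant symbol $\{1,1,\dots\}$ together with one companion — which accounts for the ``save possibly for two'' clause.

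The hard part will be the flow-box modification of the middle paragraph. I must guarantee that inserting the shear $\psi_s$ creates no new recurrent behaviour outside the horseshoe, so that the non-wandering set of $G$ is exactly the hyperbolic suspended horseshoe together with the saddle-foci, and I must reconcile the partial nature of $f_p$ — trajectories escaping through $l$ or drifting toward infinity — with the construction of a genuine trapping region on which $G$ is hyperbolic. Certifying that no non-hyperbolic recurrence survives the deformation and that the escaping behaviour is routed cleanly toward the fixed points and infinity is where the real work lies; once the deformation is in place, the knot-type bookkeeping and the hyperbolicity of the suspended horseshoe are comparatively routine.
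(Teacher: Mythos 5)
Your overall strategy --- realize the isotopy of Cor.\ref{deformation11} by a smooth deformation of the vector field and track periodic orbits through it --- matches the paper's, but the central mechanism of your middle paragraph does not work, and it is precisely where the paper does something different. You propose to modify $F_p$ only inside a thin flow box $N\cong U_p\times(-\varepsilon,\varepsilon)$ by a compactly supported shear whose holonomy converts $f_p$ into a genuine Smale horseshoe $g=\psi_1\circ f_p$. But the failure of $f_p$ to be a horseshoe is not a defect localized in $N$: by Prop.\ref{boundball} and Prop.\ref{lem33}, $f_p$ is singular at $P_0=\Theta\cap U_p$ (return times blow up and images wind without bound as one approaches the one-dimensional stable manifold of $P_{In}$) and is discontinuous along $\delta=f_p^{-1}(l)$, and both pathologies are forced by the saddle-foci $P_{In},P_{Out}$ and the heteroclinic connection --- structures lying outside any flow box around the cross-section. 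A homeomorphism $\psi_1$ of the fibers cannot cancel a discontinuity of $f_p$, so $\psi_1\circ f_p$ inherits the puncture at $P_0$ and cannot literally be a Smale horseshoe map on a full rectangle; likewise $f_p^{-1}$ is not globally available. The paper resolves this by modifying the field \emph{at the fixed points}: first it deforms $F_p$ to $R_p$ (Th.\ref{th21}), removing the degenerate fixed point at $\infty$ and closing $\Gamma_{In},\Gamma_{Out}$ into a single heteroclinic orbit $\Gamma$ (your sketch never addresses smoothness of $G$ on $S^3$ at $\infty$, which the statement requires); it then performs Hopf bifurcations at $P_{In}$ and $P_{Out}$, blowing the heteroclinic knot $W=\Theta\cup\Gamma$ up into a tube. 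This opens $P_0$ into a disc, makes the return map continuous on $\overline{H_p}$, and only then can the straightening into an honest horseshoe be carried out (Prop.\ref{doubla} and the final stage).

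A secondary but telling discrepancy is your accounting of the ``save possibly for two'' clause. In the paper the at-most-two exceptional periodic trajectories of $G$ are the two periodic orbits created by the Hopf bifurcations, which collapse back onto the fixed points $P_{In},P_{Out}$ when $G$ is deformed back to $F_p$; they are not the boundary orbits of the Birman--Williams template, as you assert. That this bookkeeping only makes sense once the fixed points have been opened up is another indication that the Hopf-bifurcation step cannot be bypassed by a cross-sectional shear.
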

\begin{proof}
We prove Th.\ref{deform} by constructing the deformation from $F_p$ to $G$ explicitly, using an argument which closely mirrors the one used to prove Th.\ref{th31}. We do so in several steps, as outlined below:
    
    \begin{itemize}
        \item First, we deform $F_p$ to the vector field $R_p$ given by Th.\ref{th21} - this allows us to extend $F_p$ to a smooth vector field of $S^3$, without losing any of the periodic trajectories given by Th.\ref{th31}.
        \item Second, we deform $R_p$ to $G_p$ by opening up the fixed-points by Hopf-bifurcations. As will be made clear, this will allow us to describe the first-return map for $G_p$ as a "distorted" Smale Horseshoe.
        \item Finally, we deform $G_p$ to $G$ by "straightening" its dynamics - that is, if $G_p$ generates a flow which is that of a suspended, distorted horseshoe, we deform the said flow by removing that said "distortions" - from which Th.\ref{deform} would follow. 
\end{itemize}
\subsection{Stage $I$ - deforming $F_p$ to $R_p$.}
Recall the vector field $R_p$ from Th.\ref{th21} - as proven in Th.\ref{th21}, given any sufficiently large radius $r>0$ we can choose $R_p$ s.t. it coincides with the vector field $F_p$ on $B_r(0)$. As a consequence, by the boundedness of $B_\alpha$ we can smoothly deform $F_p$ to $R_p$ without destroying any of the dynamics of $F_p$ inside $B_\alpha$.\\
\begin{figure}[h]
\centering
\begin{overpic}[width=0.65\textwidth]{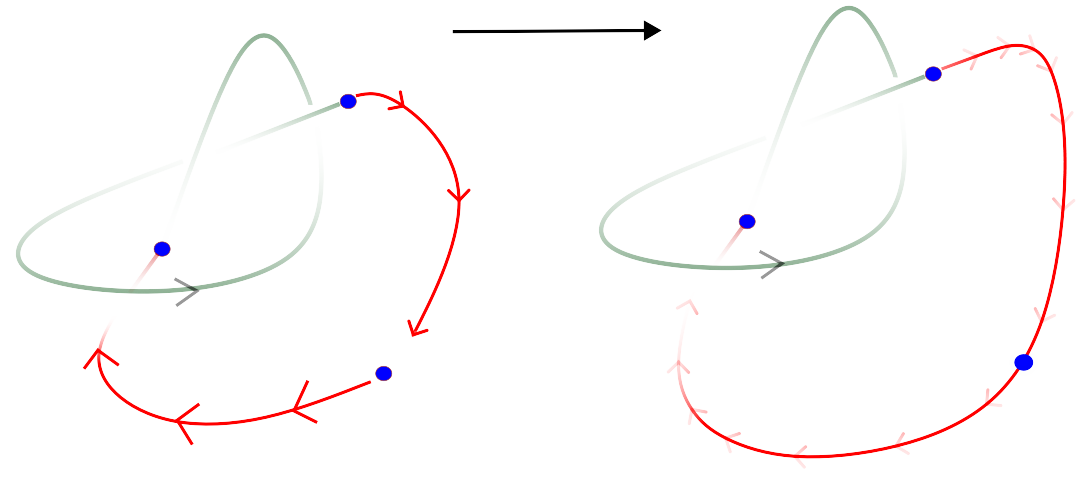}%
\put(930,280){$\Gamma$}
\put(840,410){$P_{Out}$}
\put(320,380){$P_{Out}$}
\put(390,80){$\infty$}
\put(60,200){$P_{In}$}
\put(600,230){$P_{In}$}
\put(970,120){$\infty$}
\put(420,180){$\Gamma_{Out}$}
\put(250,110){$\Gamma_{In}$}
\end{overpic}
\caption[Deforming $F_p$ to $R_p$ for trefoil parameters.]{\textit{The deformation of $F_p$ to $R_p$ - $\Gamma_{Out},\Gamma_{In}$ are connected by the deformation to a heteroclinic trajectory $\Gamma$ (thus destroying the fixed point at $\infty$).}}\label{TRANS}
\end{figure}

Since $\overline{B_\alpha}\subseteq B_r(0)$, the topological ball $B_\alpha$ persists as $F_p$ is smoothly deformed to $R_p$. Therefore, since $B_\alpha$ is invariant under $F_p$, it remains so under $R_p$ as well. Let us now consider the cross-section $U_p$, and recall it is defined as the maximal subset on $\{\dot{y}=0\}$ on which trajectories cross from $\{\dot{y}>0\}$ to $\{\dot{y}<0\}$. Since $U_p$ is a topological disc, we construct the deformation from $F_p$ to $R_p$ s.t. $U_p$ persists as well. That is, we construct the smooth deformation from $F_p$ to $R_p$ s.t. $U_p$ both remains the maximal set on $\{\dot{y}=0\}$ at which trajectories cross from $\{\dot{y}>0\}$ to $\{\dot{y}<0\}$, and a half-plane. Now, using Def.\eqref{def32} we conclude:

\begin{corollary}
\label{coin}    If $p\in P$ is a trefoil parameter, then the one-dimensional invariant manifolds for $P_{In}$ and $P_{Out}$ coincide to a one-dimensional manifold $W$ - in particular, $W=\Theta\cup\Gamma$ (see the illustration in Fig.\ref{TRANS}). Moreover, $W$ is the one-dimensional stable manifold for $P_{In}$, and the one-dimensional unstable manifold of $P_{Out}$ (w.r.t. $R_p$).
\end{corollary}

Cor.\ref{coin} essentially implies the following - given an initial condition $s\in\mathbf{R}^3\setminus W$, neither its forward nor backwards trajectory under $R_p$ can tend to $\infty$. With this idea in mind, let $r_p:\overline{U_p}\setminus W\to\overline{U_p}\setminus W$ denote the first-return map w.r.t. $R_p$ - wherever defined in $\overline{U_p}\setminus W$. We now prove:

\begin{lemma}\label{corhp}
Let $p\in P$ be a trefoil parameter, and let $W$ be as in Cor.\ref{coin}. Then, $R_p$ and $r_p$ satisfy the following:

\begin{itemize}

    \item Given any initial condition $s\in\mathbf{R}^3\setminus W$ which is not a fixed point, the trajectory of $s$ w.r.t. $R_p$ eventually hits $U_{p}$ transversely.
    \item The first return map $r_p:\overline{U_p}\setminus W\to\overline{U_p}\setminus W$ is well-defined. 
    \item We have $r_p|_{\overline{D_\alpha}\setminus \{P_0\}}=f_p|_{\overline{D_\alpha}\setminus \{P_0\}}$ - that is, the first-return maps $f_p$ for $F_p$ and $r_p$ for $R_p$ coincide on the $\overline{D_\alpha}\setminus \{P_0\}$ (they possibly differ on the rest of $U_p$).
\end{itemize}

\end{lemma}
\begin{proof}
Given $s\in\mathbf{R}^3$, denote by $\gamma_s$ its solution curve w.r.t. $R_p$, and parameterize it s.t. $\gamma_s(0)=s$ - moreover, denote by $\gamma_s(0,\infty)$ the forward trajectory of $s$. Now,  recall that by definition, the topological disc $U_{p}$ is the maximal set on $\{\dot{y}=0\}$ on which $R_p$ points into $\{\dot{y}<0\}$. As such, we conclude that if $\dot{y}$ changes sign along $\gamma_s(0,\infty)$ at least twice, $\gamma_s(0,\infty)$ must intersect $U_{p}$ transversely.\\

We now prove that given $s\in\mathbf{R}^3\setminus W$ which is not a fixed point, its forward trajectory hits $U_{p}$ transversely infinitely many times. By the discussion above, it would suffice to prove the sign of $\dot{y}$ changes infinitely many times along $\gamma_s$. We do so at two stages - first, we prove this is the case for $s\in\mathbf{R}^3$ whose forward trajectory does not tend to the fixed points $P_{In}$ and $P_{Out}$. To do so, let us first note that since $\Gamma\subseteq W$, by $s\not\in W$ it follows the forward trajectory of $s$ cannot tend to $\infty$ - which implies that because $\gamma_s(0,\infty)$ does not limit to either saddle foci $P_{In},P_{Out}$ in forward time, the sign of $\dot{y}$ (w.r.t. $R_p$) along $\gamma_s(0,\infty)$ must change infinitely many times. Therefore, whenever the forward trajectory of $s$ does not tend to either $P_{In}$ or $P_{Out}$ it intersects transversely with $U_{p}$ infinitely many times.\\

Now, recall $W=\Theta\cup\Gamma$, and that both $\Gamma$ and $\Theta$ are heteroclinic trajectories directed from $P_{Out}$ to $P_{In}$. Recalling $P_{In}$ is a saddle-focus with a one-dimensional stable manifold, it is immediate $W$ is the said one-dimensional stable manifold of $P_{In}$. As such, we conclude that whenever $s\not\in W$, its forward trajectory cannot tend to $P_{In}$. As far as the saddle-focus $P_{Out}$ is concerned, recall its stable manifold, $W^s_{Out}$, is two-dimensional (see the discussion at page \pageref{eq:9}), and that is its transverse to $U_p$ around $P_{Out}$ - w.r.t. the vector field $F_p$ (see Lemma \ref{TR}). Since $F_p$ and $R_p$ coincide around $P_{In}$ and $P_{Out}$ (as both are interior to $B_r(0)$ - see the discussion before Lemma \ref{coin}), the same is true for $R_p$ - that is, if $W'$ is the stable manifold for $P_{Out}$ w.r.t. $R_p$, $W'$ is transverse to $U_p$ at $P_{Out}$ (see Fig.\ref{loci}). Consequentially, since $P_{Out}$ is a saddle-focus, given any $s\in W'$ its forward trajectory eventually flows to some neighborhood of $P_{Out}$ where it hits $U_{p}$ transversely infinitely many times.\\

All in all, given any $s\in\mathbf{R}^3\setminus W$ that is not a fixed point, because $s$ cannot lie on either $\Gamma$ or $\Theta$ (i.e., its trajectory cannot tend to $P_{In}$), there are precisely two possibilities: either the forward trajectory of $s$ lies on $W'$, or it does not. If $s\not\in W'$, its forward trajectory hits $U_p$ transversely infinitely many times - and by the discussion above, the same is true whenever $s\in W'$. Therefore, given any initial condition $s\in\mathbf{R}^3$ that is not a fixed point and does not lie in $W$, its forward trajectory would eventually hit $U_p$ transversely. As a consequence, $U_p$ is a universal cross-section for $R_p$ - and moreover, that first-return map $r_p:\overline{U_p}\setminus W\to\overline{U_p}\setminus W$ is well-defined.\\

Finally, let us consider the closed ball $\overline{B_\alpha}\subseteq B_r(0)$ - by the construction of $R_p$ it follows that because $F_p$ and $R_p$ coincide on $B_r(0)$, they also coincide on any invariant sets inside $B_r(0)$. Let us recall $\overline{B_\alpha}$ is $F_p$-invariant, hence the first-return maps $r_p$ and $f_p$ coincide on the set $\overline{B_\alpha}\setminus W\cap \overline{U_p}$. Now, let us recall $\overline{B_\alpha}\cap \overline{U_p}$ is the disc $\overline{D_\alpha}$ (see Prop.\ref{lem33}), therefore, $f_p$ and $r_p$ coincide on $\overline{D_\alpha}\setminus W$. Since by Def.\ref{def32} we know $W\cap \overline{D_\alpha}=\{P_0\}$, Lemma \ref{corhp} now follows.
\end{proof}

\subsection{Stage $II$ - deforming $R_p$ to $G_p$.}    
Having smoothly deformed $F_p$ to $R_p$ we now continue our analysis - and in this stage we smoothly deform $R_p$ to a $C^\infty$ vector field on $S^3$, $G_p$, whose dynamics include a suspended topological horseshoe. To begin let us perform Hopf-Bifurcations at both fixed-points $P_{In}$ and $P_{Out}$, replacing them by a sink $P'_{In}$ and a source $P'_{Out}$, respectively (see the illustration in Fig.\ref{HOPFA}) - this smooth deformation expands up the set $W=\Theta\cup\Gamma$ to an knotted tube, as indicated in Fig.\ref{HOPFBB}.\\ 

\begin{figure}[h]
\centering
\begin{overpic}[width=0.45\textwidth]{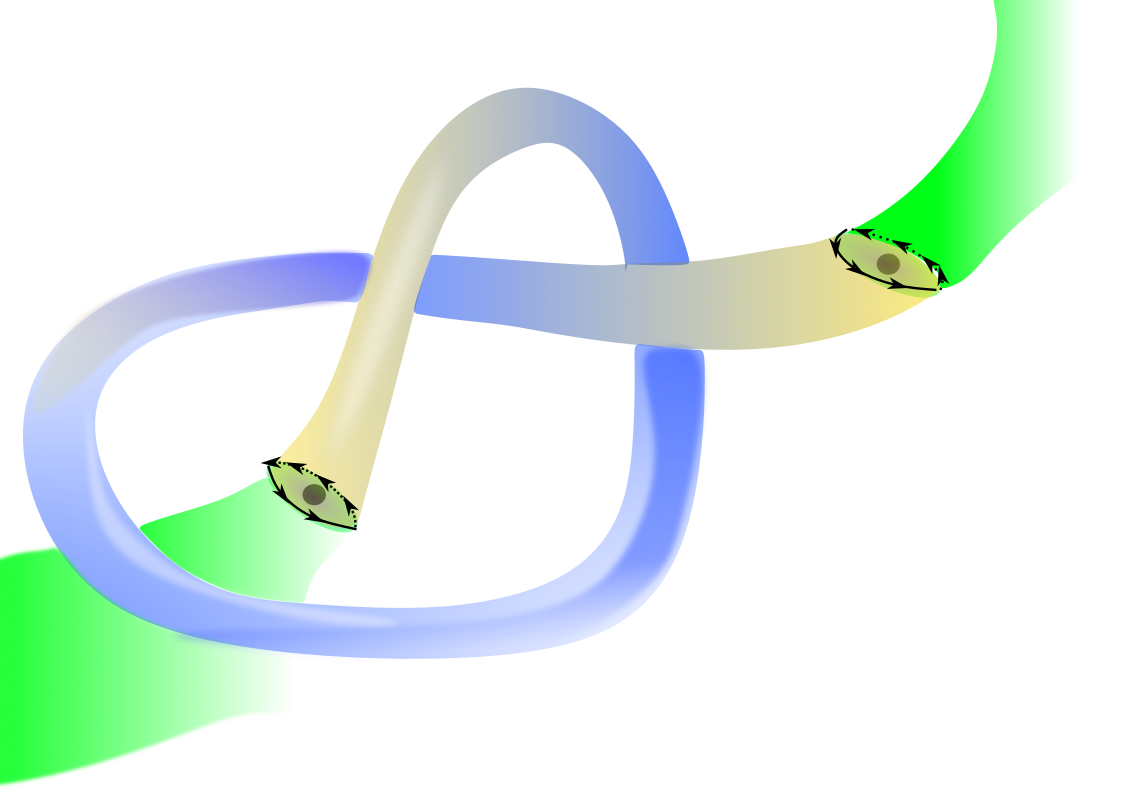}
\put(840,390){$P_{Out}$}
\put(350,220){$P_{In}$}
\end{overpic}
\caption[Expanding the heteroclinic trajectories to tubes.]{\textit{The effects of the Hopf bifurcations - $P_{In}$ and $P_{Out}$ become periodic orbits surrounding the sink and the source $P'_{In}$ and $P'_{Out}$ (respectively), while the heteroclinic trajectories are expanded from curves into tubes. The blue-yellow tube denotes the blow-up of $\Theta$, while the green one denotes the blow-up of $\Gamma$.}}\label{HOPFBB}
\end{figure}
Since $W$ becomes a tube under this deformation, it follows the points in $W\cap U_p$ all becomes open discs - in particular, the point $P_0=\Theta\cap\overline{U_p}$ is opened into an open disc (see the illustration in Fig.\ref{HOPFA}). In addition, we choose this deformation the first-return map of $R_p$, $r_p:\overline{U_p}\setminus W\to\overline{U_p}\setminus W$ is isotopically deformed to $g_p:\overline{U_p}\setminus W\to\overline{U_p}\setminus W$, the corresponding first return map for $G_p$. In particular, we choose $G_p$ s.t. the following is satisfied (see Fig.\ref{HOPFA}):

\begin{itemize}
    \item $P'_{In}$ is an attracting fixed point on $\partial U_p$, while $P'_{Out}$ is a repeller on  $\partial U_p$. Moreover, $P_{In},P_{Out}$ are hyperbolic fixed points for $g_p$, strictly interior to the cross-section $U_p$.
    \item The set $D_\alpha$ is preserved - that is, we deform $R_p$ to $G_p$ s.t. the two dimensional unstable manifold of $P_{In}$ persists as the two-dimensional stable manifold of $P_{Out}$. That is, we do not destroy the topological ball $B_\alpha$.
    \item The basin of attraction of $P'_{In}$ on $U_p$ includes a half-disc $D_{In}$ s.t. $P'_{In},P_{In}\in\partial D_{In}$ - see the illustration in Fig.\ref{HOPFA}.
    \item We deform $R_p$ to $G_p$ s.t. $G_p$ s.t. both $g_p(P_0)\subseteq D_{In}$, and $g_p$ is continuous around the closed disc $\overline{P_0}$ (see the illustration in Fig.\ref{HOPFBB}). In more detail, similarly to what we did in the proof of Th.\ref{th31} we open the curve $\delta$ given by Prop.\ref{lem33} to an open sector $\delta'$ containing $P_0$, as illustrated in Fig.\ref{HOPFA} and Fig.\ref{HOPFBB} - and we do so s.t. $g_p$ is continuous on $\overline{\delta'}$, and $\delta'\cap\overline{H_p}\subseteq\partial H_p$ (as in Fig.\ref{HOPFA}).
\end{itemize}

 \begin{figure}[h]
\centering
\begin{overpic}[width=0.4\textwidth]{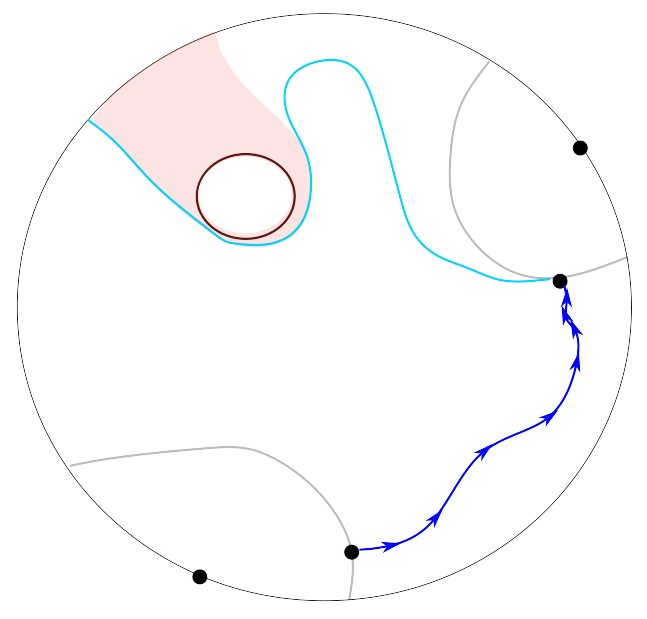}
\put(190,0){$P'_{In}$}
\put(550,35){$P_{In}$}
\put(750,335){$L$}
\put(320,155){$D_{In}$}
\put(750,680){$D_{Out}$}
\put(350,645){$P_0$}
\put(280,745){$\delta'$}
\put(910,750){$P'_{Out}$}
\put(870,480){$P_{Out}$}
\put(350,445){$H_p$}
\end{overpic}
\caption[The effects of the Hopf bifurcation on $U_p$.]{\textit{The Hopf bifurcation opens $P_{In}$ and $P_{Out}$ to periodic orbits, while $P_0$ is opened to a disc. $D_{In}$ denotes the immediate basin of attraction for a sink $P'_{In}$ while $D_{Out}$ denotes the immediate basin of repelling for the source $P'_{Out}$. $L$ persists under the deformation, while the red region is $\delta'$, the sector generated by opening $\delta$. $H_p$ is the region trapped between $L$ and $\partial U_p$.}}
\label{HOPFA}
\end{figure}

Now, consider the cross-section $H_p\subseteq D_\alpha\subseteq U_p$ (see Prop.\ref{lem33}), and recall that by Lemma \ref{corhp} we have $f_p|_{\overline{D_\alpha}\setminus\{P_0\}}=r_p|_{\overline{D_\alpha}\setminus\{P_0\}}$ - where $f_p$ is the first-return map for the original vector field, $F_p$ (i.e., for the original Rössler system at trefoil parameters), while $r_p$ is the first-return map for $R_p$. This allows us to choose the deformation of $F_p$ to $G_p$ s.t. the set $H_p$ persists as a topological disc inside $U_p$ (see the illustration in Fig.\ref{HOPFA}). Therefore, as we deform deform $F_p$ smoothly to $R_p$ and then to $G_p$, by the construction of $G_p$ outlined above it follows the deformation of $F_p$ to $G_p$ induces an isotopy between $r_p:\overline{H_p}\setminus\delta\to\overline{U_p}$ and $g_{p}:\overline{H_p}\setminus\delta\to\overline{U_p}$ - as illustrated in Fig.\ref{HOPFB} and Fig.\ref{HOPFBBB}. \\

 \begin{figure}[h]
\centering
\begin{overpic}[width=0.4\textwidth]{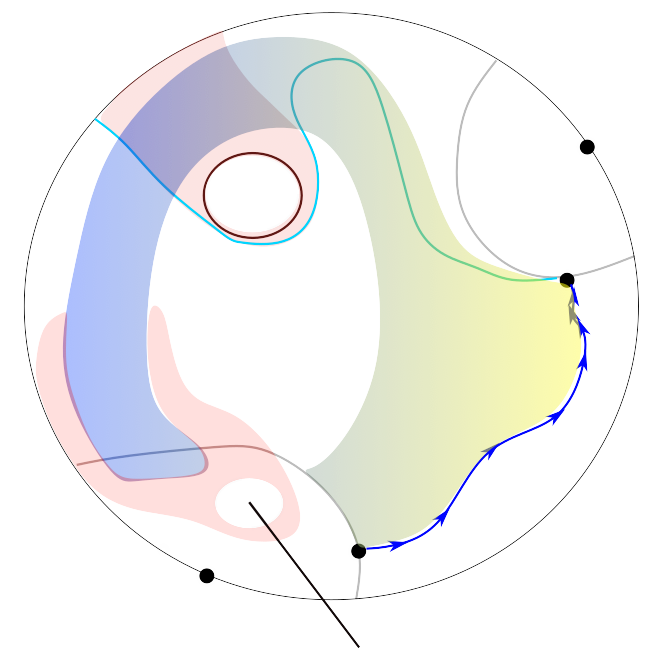}
\put(190,70){$P'_{In}$}
\put(550,115){$P_{In}$}
\put(550,5){$g_p(P_0)$}
\put(750,265){$L$}
\put(320,155){$D_{In}$}
\put(750,680){$D_{Out}$}
\put(350,705){$P_0$}
\put(280,775){$\delta'$}
\put(900,790){$P'_{Out}$}
\put(870,520){$P_{Out}$}
\put(630,480){$g_p(H_p)$}
\put(350,445){$H_p$}
\end{overpic}
\caption[The image of $H_p$ under $g_p$.]{\textit{The image of $H_p$ under $g_p$ is sketched as the yellow-blue region. As can be seen, $g_p(P_0)\subseteq D_{In}$, while $g_p$ is continuous around $\delta'$.}}
\label{HOPFB}
\end{figure}

Now, recall that per Cor.\ref{deformation11} the periodic orbits for $f_p$ in the set $Q\subseteq H_p$ are both isotopy stable and uncollapsible. Since neither $f_p(H_p)$ or $g_p(H_p)$ intersect $P_0$ (see the illustration in Fig.\ref{HOPFBBB}), by Cor.\ref{deformation11} we know every periodic orbit for $f_p$ in $Q$ is isotopically deformed to a unique periodic orbit for $g_p$ in $Q$ - of the same minimal period. Therefore, because $F_p$ is smoothly deformed to $G_p$ this immediately implies:

\begin{corollary}
\label{corhalf}    Let $p\in P$ be a trefoil parameter, and let $F_p,G_p, U_p$ and $Q$ be as above. Let $T_0$ be a periodic trajectory for $F_p$ s.t. $T_0\cap U_p\subseteq Q$. Then, as we smoothly deform $F_p$ to $G_p$, $T_0$ is isotopically deformed to $T_\frac{1}{2}$, a periodic trajectory for $G_p$, s.t. $T_\frac{1}{2}\cap U_p\subseteq H_p$. Consequentially, $T_0$ and $T_\frac{1}{2}$ have the same knot type, while $T_0\cap U_p$ and $T_\frac{1}{2}\cap U_p$ have the same cardinality.
\end{corollary}

\begin{figure}[h]
\centering
\begin{overpic}[width=0.75\textwidth]{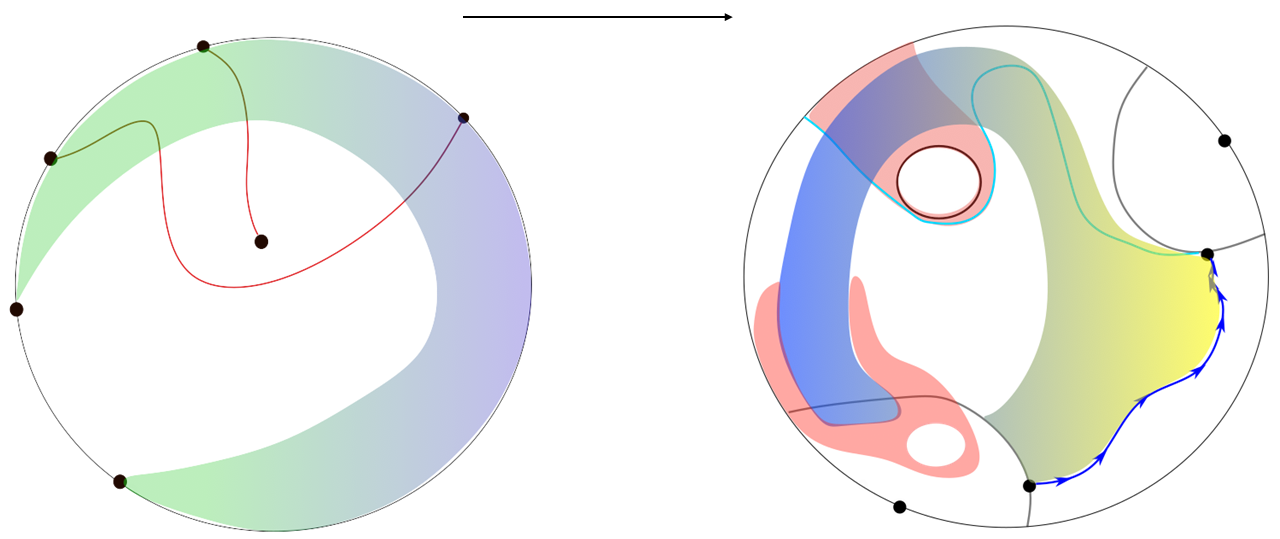}%
\put(880,280){$D_{Out}$}
\put(940,330){$P'_{Out}$}
\put(380,330){$P_{Out}$}
\put(150,280){$\delta$}
\put(680,350){$\delta'$}
\put(60,-10){$P_{In}$}
\put(810,10){$P_{In}$}
\put(650,-10){$P'_{In}$}
\put(960,190){$P_{Out}$}
\put(180,180){$P_0$}
\put(710,270){$P_0$}
\put(250,60){$f_p(H_p)$}
\put(830,150){$g_p(H_p)$}
\end{overpic}
\caption[Isotoping $f_p$ to $g_p$.]{\textit{The isotopy deforming $f_p$ to $g_p$, induced by the smooth deformation of the vector field $F_p$ to $G_p$. }}\label{HOPFBBB}
\end{figure}

To continue, recall we constructed $G_p$ s.t. $g_p$ is continuous around the closed disc $\overline{P_0}$, and, more generally, on the closed sector $\overline{\delta'}$ as defined above (see the illustrations in Fig.\ref{HOPFA} and \ref{HOPFBBB}) - in particular, recall $g_p(P_0)\subseteq D_{In}$. Let us now denote by $AB$ an arc on $\partial D_{In}$ with one end at $P_{In}$ as in Fig.\ref{HOPFD}, and by $CD$ some arc in $\partial H_p$, s.t. the following is satisfied:
\begin{itemize}
    \item $g_p(H_p)\setminus CD$ includes (at least) two components connecting $AB$ and $CD$ (as illustrated in Fig.\ref{HOPFD}).
    \item $H_p$ and $P_0$ lie in different components of $U_p\setminus CD$.
    \item By smoothly deforming $G_p$ (if necessary), we may also arrange $g_p(CD)\subseteq D_{In}$.
\end{itemize}

 \begin{figure}[h]
\centering
\begin{overpic}[width=0.5\textwidth]{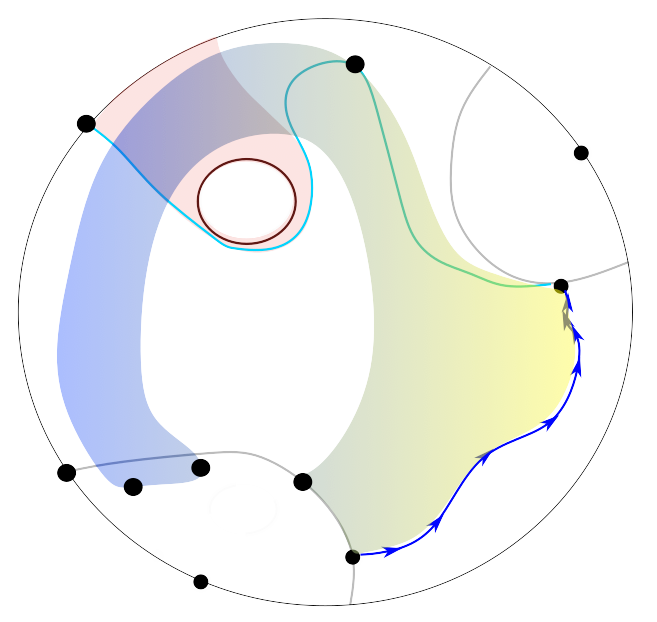}
\put(190,0){$P'_{In}$}
\put(550,40){$P_{In}=B=g_p(B)$}
\put(450,250){$g_p(A)$}
\put(290,280){$g_p(C)$}
\put(200,165){$g_p(D)$}
\put(850,265){$L$}
\put(320,105){$D_{In}$}
\put(720,680){$D_{Out}$}
\put(340,630){$P_0$}
\put(280,775){$\delta'$}
\put(80,775){$C$}
\put(500,890){$D$}
\put(20,215){$A$}
\put(900,760){$P'_{Out}$}
\put(870,520){$P_{Out}$}
\put(630,400){$g_p(H_p)$}
\put(350,445){$H_p$}
\end{overpic}
\caption[Choosing sides in $H_p$.]{\textit{Choosing the $AB$ and $CD$ sides in $\partial H_p$.}}
\label{HOPFD}
\end{figure}

By considering the $AB$ and $CD$ arcs on $\partial H_p$ we can now denote $H_p$ as $ABCD$, a topological rectangle. Consequentially, we now sketch $g_{p}$ as a rectangle map, $g_{p}:ABCD\to\overline{U_p}$, as appears in Fig.\ref{HOPFD1}. As can be seen in Fig.\ref{HOPFD1}, $g_p$ looks very similar to a Smale Horseshoe map - in the sense that even though it may be more complicated than a Smale Horseshoe map, it still appears to stretch and fold $H_p=ABCD$ very similarly to a Smale Horseshoe map. We now make this idea precise by proving the following fact:
\begin{proposition}
\label{doubla}    There exists an invariant set $I$ in $\overline{D_\alpha}$ and a continuous, surjective map $\xi:I\to\{1,2\}^\mathbf{Z}$ s.t. $\xi\circ g_{p}=\zeta\circ\xi$, where $\zeta:\{1,2\}^\mathbf{Z}\to\{1,2\}^\mathbf{Z}$ is the double-sided shift. Additionally, we have:

\begin{itemize}
    \item Given any $s\in\{1,2\}^\mathbf{Z}$, $\zeta^{-1}(s)$ is contractible.
    \item If $s\in\{1,2\}^\mathbf{Z}$ is periodic of minimal period $k$, $\xi^{-1}(s)$ includes a periodic point $x_s$ for $g_p$ of minimal period $k$.
    \item $x_s$ can be isotopically deformed to a periodic point of $f_p$ in $\overline{H_p}$ of minimal period $k$.
\end{itemize}

\end{proposition}

 \begin{figure}[h]
\centering
\begin{overpic}[width=0.4\textwidth]{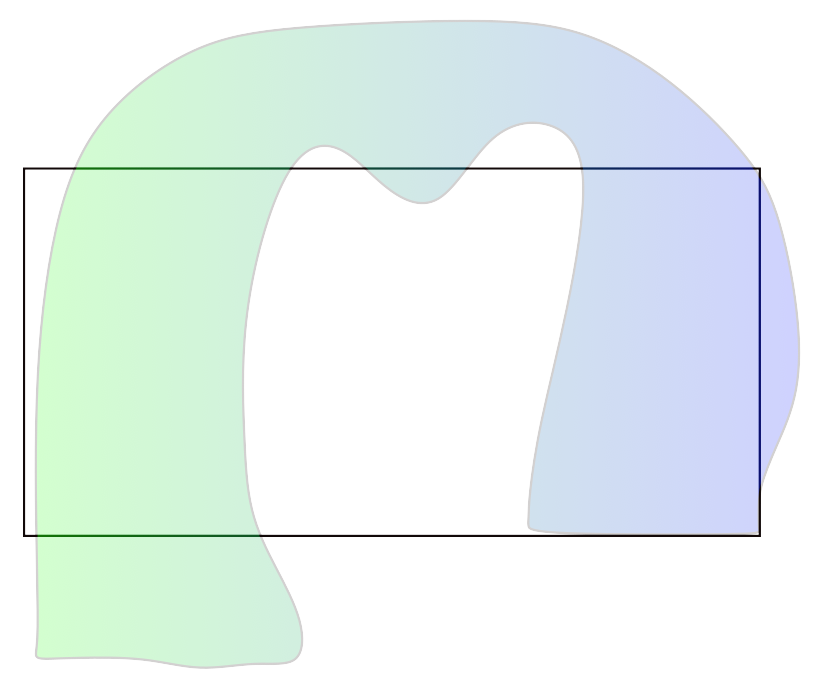}
\put(950,165){$P_{In}=B=g_p(B)$}
\put(650,100){$g_p(A)$}
\put(0,-50){$g_p(C)$}
\put(250,-45){$g_p(D)$}
\put(-40,600){$C$}
\put(950,600){$D$}
\put(-40,165){$A$}
\put(290,720){$g_p(ABCD)$}
\end{overpic}
\caption[Sketching $g_p$ as a rectangle map.]{\textit{Sketching $g_p$ as a rectangle map of $ABCD=H_p$.}}
\label{HOPFD1}
\end{figure}

\begin{proof}
We prove Prop.\ref{doubla} directly, by applying a constriction similar to the original Smale Horseshoe argument (see \cite{S}). To begin, note that by Fig.\ref{HOPFD1} $ABCD\cap g^{-1}_{p}(ABCD)$  includes at least two components, $L_1,L_2$, s.t. both connect the $AC$ and $BD$ sides in non-trivial arcs (see the illustration in Fig.\ref{HOPFE}). Moreover, for $i,j\in\{1,2\}$ the set $g_{p}(L_i)$ connects the $AB$ and $CD$ sides, hence $g_{p}(L_i)\cap L_j\ne\emptyset$. Therefore, for $i=1,2$ we can split $L_i$ to at least three sets, $L_{i,1},L_{i,2}$ and $L_{i,3}$ (all of which connect the $AC$ and $BD$ sides) s.t. $g_p(L_{i,j})\subseteq L_j$ when $i,j\in\{1,2\}$, and $g_p(L_{i,3})\cap ABCD=\emptyset$ (see the illustration in Fig.\ref{HOPFF}). However, since $g^2_p(L_{i,j})$ also connects the $AB$ and $CD$ sides it follows we can also split $L_{i,j}$ to at least three closed sets, $L_{i,j,1}$, $L_{i,j,2}$ and $L_{i,j,3}$, all of which are closed topological discs which connect the $AC$ and $BD$ sides, s.t. $g^3_p(L_{i,j,k})\subseteq L_k$ when $k=1,2$, and $g^3_{p}(L_{i,j,3})\cap ABCD=\emptyset$.\\

 \begin{figure}[h]
\centering
\begin{overpic}[width=0.4\textwidth]{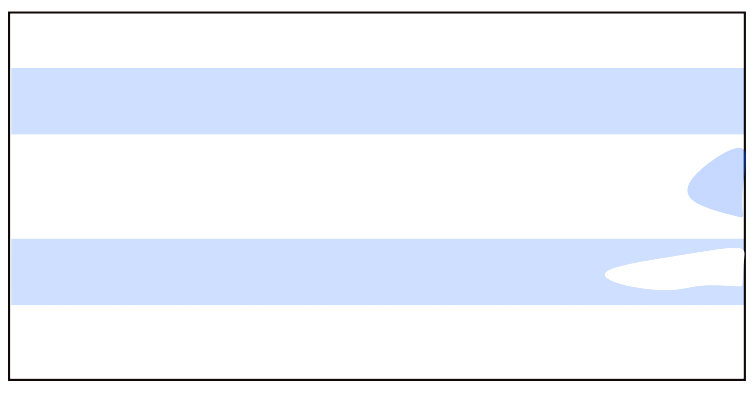}
\put(990,5){$B$}
\put(-50,480){$C$}
\put(990,480){$D$}
\put(-50,5){$A$}
\put(-70,155){$L_1$}
\put(-70,275){$L_3$}
\put(-70,405){$L_2$}
\end{overpic}
\caption[The sub-rectangles $L_1$ and $L_2$.]{\textit{The (topological) sub-rectangles $L_1$ and $L_2$. The other blue regions correspond to different components of $g^{-1}_p(ABCD)\cap ABCD$. $L_3$ denotes the region which is mapped outside under $g_p$. }}
\label{HOPFE}
\end{figure}

Repeating this argument ad infinitum, it follows that for any $\{i_0,...,i_n\}\in\{1,2\}^{n}$, $n>0$ we can define a closed topological disc $L_{i_0,...,i_n}$ s.t. $g^k_{p}(L_{i_0,...,i_n})\subseteq L_{i_k}$, $0\leq k\leq n$, while $g^{n+1}_{p}(L_{i_0,...,i_n})$ connects the $AB$ and $CD$ sides. Hence, given any $s\in\{1,2\}^\mathbf{N}$, $s=\{i_0,i_1,...\}$ by considering $L_s=\cap_{n\geq0} L_{i_0,...,i_n}$, we conclude $L_s$ is non-empty and forms the intersection of a nested sequence of closed topological discs in $ABCD$, all connecting the $AC$ and $BD$ sides - consequentially, $L_s$ also connects the $AC$ and $BD$ sides..\\

 \begin{figure}[h]
\centering
\begin{overpic}[width=0.4\textwidth]{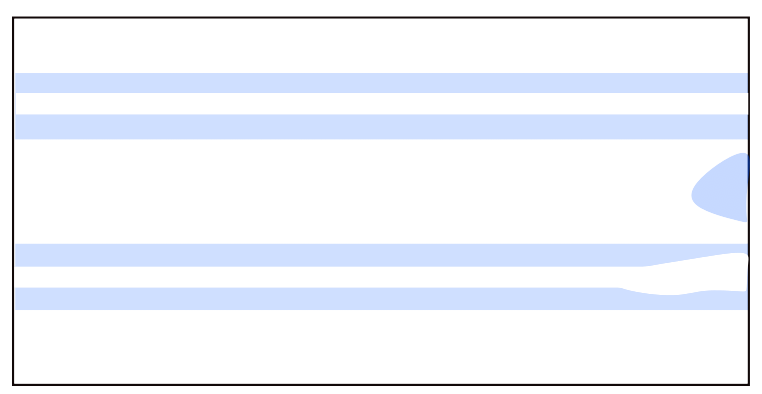}
\put(990,-25){$B$}
\put(-50,480){$C$}
\put(990,480){$D$}
\put(-100,-25){$A$}
\put(-50,185){$L_{1,2}$}
\put(-100,115){$L_{1,1}$}
\put(-100,315){$L_{2,2}$}
\put(-100,415){$L_{2,1}$}
\end{overpic}
\caption[Splitting $L_1$ and $L_2$.]{\textit{$L_1$ and $L_2$ are split to $L_{1,2},L_{1,1},L_{2,1},L_{2,2}$ s.t. $g_p(L_{i,j})\subseteq L_j$, and $g^2_p(L_{i,j})$ stretches from the $AB$ to the $CD$ side. }}
\label{HOPFF}
\end{figure}

Now, let us consider $L_s\cap g_{p}(ABCD)$ - it is immediate $g_{p}(L_i)\cap L_s\ne\emptyset$ for both $i=1,2$ - which implies we can partition $L_s$ to $L_{1,s}$ and $L_{2,s}$ s.t. $g^{-1}_{p}(L_{i,s})\subseteq L_i$, $i=1,2$ (see the illustration in Fig.\ref{HOPFG}). Similarly, by considering $g^n_p(ABCD)$ and iterating this argument, it follows that given any sequence $\{i_{-n},...,i_{-1}\}\in\{1,2\}^n$ we can generate a set $L_{i_{-n},...i_{-1},s}$ s.t. for every $-n\leq k\leq-1$ we have $g^{-k}_{p}(L_{i_{-n},...i_{-1},s})\subseteq L_{i_k}$. Therefore, repeating this proccess ad inifnitum, by considering $L_s\cap(\cap_{n\geq0} L_{i_{-n},...,i_{-1},s})$ we conclude that for every $\{...,i_{-1}\}=\omega$ there exists at least one component $L_{\omega,s}\subseteq L_s$ s.t. if $\{\omega,s\}=\{....,i_{-1},i_0,i_1,...\}$, for every $k\in\mathbf{Z}$ we have $g^k_{p}(L_{\omega,s})\subseteq L_{i_k}$.\\

 \begin{figure}[h]
\centering
\begin{overpic}[width=0.4\textwidth]{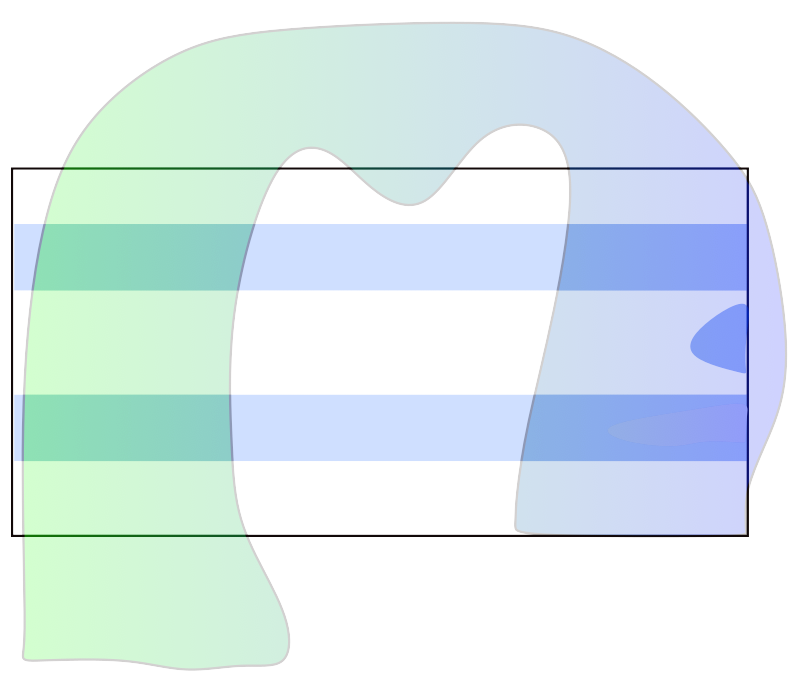}
\put(950,165){$B=g_p(B)$}
\put(650,100){$g_p(A)$}
\put(0,-50){$g_p(C)$}
\put(250,-45){$g_p(D)$}
\put(-60,600){$C$}
\put(950,600){$D$}
\put(-60,165){$A$}
\put(290,720){$g_p(ABCD)$}
\end{overpic}
\caption[Decomposing $L_1$ and $L_2$.]{ \textit{$g^{-1}_p(L_i), i=1,2$ intersects both $L_1$ and $L_2$}.}
\label{HOPFG}
\end{figure}

Consequentially, it again follows that $L_{\omega,s}$ in homeomorphic to the intersection of a nested sequence of closed topological disc - as such, by deforming $g_p$ isotopically (if necessary) we can ensure $L_{\omega,s}$ is a convex set, hence contractible. All in all, for every $\mu\in\{1,2\}^\mathbf{Z}$, $L_\mu$ is well defined - and when $\mu$ is periodic of minimal period $k$,  as $g^k_p(L_\mu)\subseteq L_\mu$, by the Brouwer Fixed-Point Theorem it follows $L_\mu$ includes at least one periodic point of minimal period $k$. Now, set $I$ as the collection of $L_\mu$, $\mu\in\{1,2\}^\mathbf{Z}$, and set $\xi:I\to\{1,2\}^\mathbf{Z}$ by $\xi(x)=\mu$ for $x\in L_\mu$ - by the discussion above, it is immediate $\xi$ is onto. It is easy to see that since $g_{p}$ is continuous on $\overline{H_p}=ABCD$, it is also continuous on $I$ - which makes $\xi$ continuous as well. Moreover, it is also immediate that we have $\xi\circ g_{p}=\zeta\circ\xi$, where $\zeta:\{1,2\}^\mathbf{Z}\to\{1,2\}^\mathbf{Z}$ is the double-sided shift - and since by construction for every $\mu\in\{1,2\}^\mathbf{Z}$ we have $L_\mu=\xi^{-1}(\mu)$, we conclude $\xi^{-1}(\mu)$ is contractible.\\

\begin{figure}[h]
\centering
\begin{overpic}[width=0.75\textwidth]{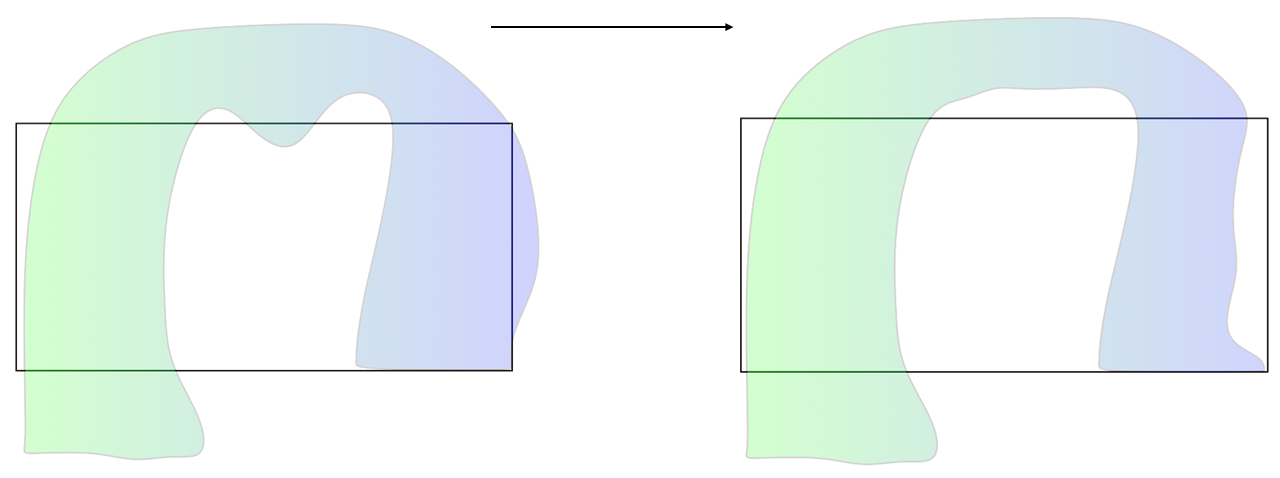}%
\put(1000,70){$B$}
\put(1000,280){$D$}
\put(410,280){$D$}
\put(410,70){$B$}
\put(-30,280){$C$}
\put(-30,70){$A$}
\put(150,300){$g_p(ABCD)$}
\put(730,320){$g'(ABCD)$}
\put(60,-10){$g_p(CD)$}
\put(850,30){$g'(AB)$}
\put(600,-30){$g'(CD)$}
\put(540,70){$A$}
\put(540,280){$C$}
\put(280,30){$g_p(AB)$}
\end{overpic}
\caption[Isotoping $g_p$ to $g'$.]{\textit{The isotopy deforming $g_p$ to $g'$ - both $g_p$ and $g'$ satisfy the assumptions of Cor.\ref{deformation11}. Moreover, this isotopy removes every component in $J\setminus I$ (as defined below).} }\label{HOPFHH}
\end{figure}

To conclude the proof of The proof of Prop.\ref{doubla} it remains to show every periodic orbit in $Q$ for $f_p$ is continuously deformed to a periodic orbit in $I$. By the discussion preceding Cor.\ref{corhalf}, we know every periodic orbit in $Q$ is continuously deformed to a periodic orbit for $g_p$ in $H_p$. Now, let us note that if $J$ is the invariant set of $g_p$ in $ABCD$, we can always smoothly deform the vector field $G_p$ as to induce an isotopy of $g_p:\overline{H_p}\setminus\delta\to \overline{D_\alpha}$ to $g':\overline{H_p}\setminus\delta\to\overline{D_\alpha}$ which removes every component of $J\setminus I$ - moreover, we can smoothly deform $G_p$ s.t. the isotopy keeps $g_p(AB)$ and $g_p(CD)$ fixed (see the illustration in Fig.\ref{HOPFHH}).\\

Since $g_p$ is isotopic to $g'$ on $\overline{H_p}\setminus\delta$, $g'$ must also be isotopic to $f_p$ in $\overline{H_p}\setminus\delta$ - and in particular, $g'$ satisfies the assumptions of Cor.\ref{deformation11}. Therefore, by Cor.\ref{deformation11} we conclude the periodic orbits in $Q$ persists under the isotopy from $f_p$ to $g'$ - and in particular, no minimal period is changed, nor do any two periodic orbits in different components of $Q$ collapse into one another. It therefore follows that as we isotopically deform $f_p$ to $g_p$ the periodic orbits in $Q$ are continuously deformed to a periodic orbit in $I$ (of the same minimal period), and the assertion follows.
\end{proof}
\begin{remark}
    Note that given any curve $\gamma\subseteq ABCD$ connecting the $AB$ and $BD$ sides, $g_p(\gamma)\cap ABCD$ includes at least two components, s.t. each connects the $AB$ and $CD$ sides. This implies $g_p:ABCD\to\overline{D_\alpha}$ is a topological horseshoe (see \cite{KY}) - in \cite{XSYS03} it was proven the first-return map for the Rossler system is also a topological horseshoe (albeit in a non-heteroclinic setting).
\end{remark}
\subsection{Stage $III$ - deforming $G_p$ to $G$ and concluding the proof.}
Having constructed the vector field $G_p$, we now smoothly deform it in $S^3$ to $G$, a vector field whose first-return map is a Smale Horseshoe map, thus concluding the proof of Th.\ref{deform}.\\

To begin, denote by $J$ denote the maximal invariant set of $g_{p}$ in the rectangle $ABCD$, where $I$ is the invariant set given by Prop.\ref{doubla} (by definition, $I\subseteq J$). Now, smoothly deform in the three-sphere $\mathbf{S}^3$ the vector field $G_p$ to a vector field $G$, by isotoping $g_p$ to a first-return map $g:\overline{U_p}\to\overline{U_p}$ as indicated below:

\begin{itemize}
    \item First, we begin by isotopically removing from $J$ all the components of $J\setminus I$, as illustrated in Fig.\ref{HOPFHH} and Fig.\ref{HOPFI}. We can do so since $J\subseteq\overline{H_p}$, and $g_p:\overline{H_p}\to \overline{D_\alpha}$ is continuous.  
    \item Second, for every initial condition in $s\in\overline{U_p}\setminus I$, we move its forward trajectory s.t. it eventually hits $D_{In}$ transversely - that is, we ensure every trajectory which lies away from $I$ eventually flows towards the sink $P'_{In}$ (see the illustration in Fig.\ref{HOPFI}).
    \item Finally, let $W$ denote the collection of forward and backwards trajectories for initial conditions in $I$ - to complete the deformation, collapse every component in $W$ to a singleton (we can do so, since by Prop.\ref{doubla} every component of $I$ is contractible).  
\end{itemize}

\begin{figure}[h]
\centering
\begin{overpic}[width=0.75\textwidth]{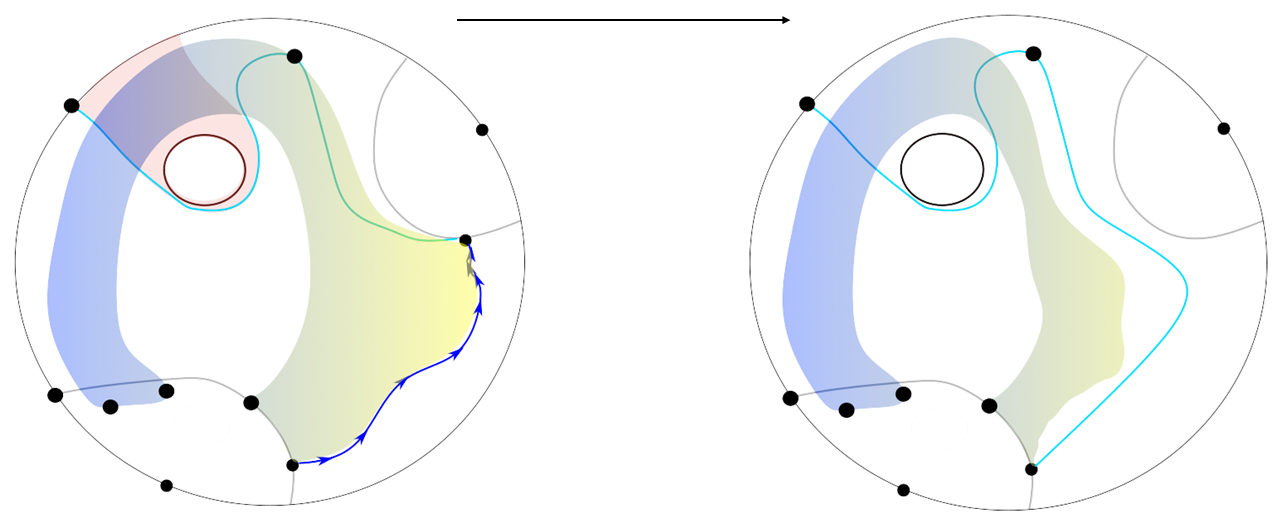}%
\put(940,330){$P'_{Out}$}
\put(380,330){$P'_{Out}$}
\put(140,270){$P_0$}
\put(80,-10){$P'_{In}$}
\put(810,10){$P_{In}=B$}
\put(230,10){$P_{In}=B$}
\put(0,70){$A$}
\put(580,70){$A$}
\put(20,320){$C$}
\put(590,320){$C$}
\put(240,360){$D$}
\put(800,380){$D$}
\put(650,-10){$P'_{In}$}
\put(380,190){$P_{Out}$}
\put(710,270){$P_0$}
\put(220,160){$g_p(H_p)$}
\put(830,150){$g(H_p)$}
\end{overpic}
\caption[Isotoping $g_p$ to $g$.]{\textit{The isotopy deforming $g_p$ to $g$, a Smale Horseshoe map.} }\label{HOPFI}
\end{figure}

By its definition, the deformation above induces an isotopy of $g_p:ABCD\to\overline{U_p}$ to some diffeomorphism $g:ABCD\to\overline{U_p}$ - the first-return map for $G$, as illustrated in Fig.\ref{HOPFI}. Setting $V$ as the invariant set of $g$ in $ABCD$, by the construction above $V$ is generated from $I$ by collapsing every component of $I$ to a singleton. As proven in Prop.\ref{doubla}, there exists a continuous surjection $\xi:I\to\{1,2\}^\mathbf{Z}$ $\xi\circ g_p=\zeta\circ\xi$ (where $\zeta:\{1,2\}^\mathbf{Z}\to\{1,2\}^\mathbf{Z}$ is the double-sided shift) - which implies there exists a homeomorphism $\mu:V\to\{1,2\}^\mathbf{Z}$ s.t. $\mu\circ g=\zeta\circ\mu$. Or, in other words, $\mu\circ g\circ\mu^{-1}=\zeta$, i.e., the dynamics of $g$ on $V$ are conjugate to those of the double-sided shift.\\

As such, it now follows $g:ABCD\to\overline{U_p}$ is a Smale Horseshoe map - hence, it is hyperbolic on $V$, i.e., there exists some $1>\lambda>0$ and $\theta>1$ s.t. for every $s\in V$ its differential w.r.t. $g$, $D_sg$, has eigenvalues $\gamma$ and $\theta$. Summarizing our results, we conclude:

\begin{corollary}
\label{gex}    The first-return map $g:V\to V$ is conjugate to a Smale Horseshoe map on its invariant set. In particular, $g$ is hyperbolic on $V$.
\end{corollary}

Therefore, to conclude the proof of Th.\ref{deform} it remains to prove every knot type generated by $G$ as a periodic trajectory (save perhaps for two) is also generated by $F_p$ as a periodic trajectory. To do so, let us note the maps $f_p:\overline{H_p}\setminus\delta\to\overline{U_p}$ and $g:\overline{H_p}\setminus\delta\to\overline{U_p}$ are isotopic. It is also easy to see that by construction, the periodic orbits for $f_p$ in $Q$ given by Th.\ref{th31} are all continuously and isotopically deformed to periodic orbits for $g$ in $V$ - of the same minimal period, and moreover, without collapsing into one another.\\

As such, it follows that if $T_1$ is a solution curve generated as a periodic trajectory for the vector field $G$, as $G$ is smoothly deformed to $F_p$, $T_1$ is deformed by an isotopy of $\mathbf{R}^3$ to $T_0$ - where $T_0$ is a solution curve corresponding to a periodic trajectory for $F_p$. Moreover, if $T_{1,1}$ and $T_{1,2}$ are two distinct periodic trajectories for $G$, they do not collapse to one another under this deformation: i.e., when we smoothly deform $G$ back to $F_p$, $T_{1,2}$ and $T_{1,1}$ are isotopically deformed to $T_{0,2}$ and $T_{0,1}$, two distinct periodic trajectories for $F_p$.\\

Finally, recall $F_p$ has precisely two fixed points in $\mathbf{R}^3$, and that $F_p$ is deformed to $G$ by opening these fixed points to periodic trajectories by Hopf-bifurcations. As such, there exist at most two periodic trajectories for $G$ which are closed to fixed-points as we return to $F_p$. Therefore, we conclude:

\begin{corollary}
Every periodic trajectory for $G$ (save perhaps for two) can be deformed to a periodic trajectory of $F_p$ by an isotopy of $\mathbf{R}^3$. Consequentially, save possibly for two knot-types, every knot type generated as a periodic trajectory for $G$ is also realized as a periodic trajectory for $F_p$.
\end{corollary}
The proof of Th.\ref{deform} is now complete.
\end{proof}

\section{Classifying the knot types and proving their persistence}
Having proven Th.\ref{deform}, in this section we study its corollaries. Our main results in this Section are Th.\ref{geometric} and Th.\ref{persistence}. Together, these two results classify the knot types generated as periodic trajectories for the Rössler system at trefoil parameters - as well as prove their persistence under. perturbations of the flow. As will be made clear below, this topological classification of periodic trajectories would essentially allow us to consider the dynamics of $G$, the vector field given by Th.\ref{deform}, as a topological lower bound for the dynamics of $F_p$ - i.e., the dynamics of $F_p$ are complex at least like those of $G$.\\

We begin by introducing the following definition:
\begin{definition}
\label{model} Let $\phi$ be a $C^\infty$ flow on a $3$-manifold $M$, with a hyperbolic non-wandering set, $K\subseteq M$. A \textit{\textbf{plug}} for $\phi$ is a $3$-manifold $N$ with boundary, satisfying the following properties:
\begin{itemize}
    \item There exists a flow $\psi$ on $N$, transverse to $\partial N$ and hyperbolic on its invariant set in $N$.
    \item  $\phi,\psi$ are orbitally equivalent on their respective non-wandering sets in $M,N$.
\end{itemize}
A \textbf{\textit{model}} is a plug with a minimal total genus.
\end{definition}
As far as models are concerned, we have the following theorem, proven in \cite{BB}:

\begin{claim}\label{begbon}
Given $\phi,M,K$ as above, there exists a model - and moreover, the model is unique up to orbital equivalence of the flows.
\end{claim}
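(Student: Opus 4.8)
The plan is to separate the statement into its two halves, existence and uniqueness, since they require rather different arguments: I would treat existence as essentially a minimization over the integers and reserve the genuine work for uniqueness. For existence, the first point to establish is that the collection of plugs (in the sense of Def.\ref{model}) for $\phi$ is non-empty. Here I would exploit the hyperbolicity of $K$ (Def.\ref{hyperbolicvector}): a compact hyperbolic invariant set admits arbitrarily small isolating neighborhoods whose boundary is transverse to the flow. Concretely, using an adapted (Lyapunov) metric compatible with the splitting $T_xM = E^s_x \oplus E^u_x \oplus E^c(x)$, one builds a smooth Lyapunov function for $\phi$ near $K$ and takes $N$ to be a regular sublevel neighborhood of $K$; the flow is then transverse to $\partial N$, the set $K$ is exactly the maximal invariant set inside $N$, and the restricted flow $\psi=\phi|_N$ is hyperbolic on its invariant set. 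Since $\psi$ coincides with $\phi$ near $K$, the two are trivially orbitally equivalent on their non-wandering sets, so $(N,\psi)$ is a plug. As the total genus of $\partial N$ is a non-negative integer, the set of total genera realized by plugs is a non-empty subset of $\mathbf{N}$ and so has a minimum; any plug realizing it is a model, which settles existence.

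For uniqueness up to orbital equivalence, the content is much deeper, and I would proceed through the structure of the flow on the complement of $K$. Given two models $(N_1,\psi_1)$ and $(N_2,\psi_2)$, by definition their non-wandering sets carry orbitally equivalent flows, so there is a homeomorphism $h_0$ conjugating $\psi_1|_{K_1}$ to $\psi_2|_{K_2}$ up to time-reparametrization; the goal is to extend $h_0$ to an orbital equivalence of the full plugs. The idea is that the wandering part of each plug is swept by orbits entering through the incoming boundary and leaving through the outgoing boundary, and these orbits are organized by the stable and unstable laminations of $K$, so that the topology of $N_i$ away from $K$ is forced by how these laminations meet $\partial N_i$. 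I would first use $h_0$ to match the stable and unstable laminations of $K_1$ and $K_2$, then extend the equivalence orbit-by-orbit along the connecting trajectories, and finally across the transverse boundary. The decisive role of \textbf{minimal total genus} is to eliminate all handles of $\partial N_i$ that are not dynamically forced: a non-minimal plug can always be simplified by a surgery along a sphere or torus disjoint from $K$ that reduces the genus, so a minimal plug admits no such removable handle, and its boundary is determined canonically by the laminations. Two minimal plugs therefore present the same forced boundary structure, and the partial equivalence extends to a global one.

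The main obstacle I expect is precisely this uniqueness step: proving that minimality of the genus rules out inequivalent completions of the germ of $\phi$ near $K$. This amounts to a rigidity statement, namely that the minimal filtrating neighborhood of a hyperbolic set is canonical, and it requires careful control of the embedded stable and unstable manifolds, of how they accumulate on $\partial N_i$, and of the handle decomposition of $N_i\setminus K$. Showing that every genus-reducing surgery is dynamically irrelevant, and that after performing all of them one lands on a unique object up to orbital equivalence, is the crux; by comparison the existence half and the reduction to a germ near $K$ are routine. For the complete argument I would follow the construction carried out in \cite{BB}.
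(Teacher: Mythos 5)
First, a point of comparison: the paper does not prove Th.\ref{begbon} at all --- it is quoted as a result of B\'eguin--Bonatti and attributed entirely to \cite{BB}. So the only honest benchmark is that reference, and your closing sentence ("I would follow the construction carried out in \cite{BB}") is in effect exactly what the paper does. Your outline of the overall architecture (existence of at least one plug, then minimize the genus over $\mathbf{N}$; uniqueness as a rigidity statement for the minimal-genus neighborhood, with the deep work concentrated there) is a fair description of how that reference is organized.

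There is, however, a genuine gap in the one step you present as "routine": the existence of a plug via a regular sublevel set of a Lyapunov function. If $N=\{L\le c\}$ is a compact neighborhood of $K$ with $L$ strictly decreasing along orbits off $K$ and $\partial N$ transverse to the flow, then the flow points \emph{inward} at every point of $\partial N$, hence $\phi_t(N)\subseteq N$ for all $t\ge 0$ and $K$ is an attractor whose basin contains $N$. But the set $K$ to which Th.\ref{begbon} is applied in this paper (Prop.\ref{orbiteq}) is the suspended horseshoe in $S^3\setminus(B_{In}\cup B_{Out})$, a saddle set with a nontrivial unstable lamination whose leaves escape every small neighborhood; no such one-sided Lyapunov neighborhood exists for it. Since transversality of the flow to $\partial N$ forces each boundary component to be entirely entrance or entirely exit, the correct object is a \emph{filtrating} neighborhood $N=A\cap R$ with $A$ attracting, $R$ repelling and $\partial A\cap\partial R=\emptyset$ --- i.e.\ two Lyapunov-type functions, not one --- and producing such an $N$ with $K$ as its maximal invariant set is itself a nontrivial part of the construction in \cite{BB} (in the spirit of Conley/Smale filtrations), not a corollary of hyperbolicity alone. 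The uniqueness half of your proposal is, as you acknowledge, a heuristic rather than a proof (in particular, "every genus-reducing surgery is dynamically irrelevant" is asserted, not argued), but since the paper itself supplies no argument either, the only concrete defect to flag is the Lyapunov sublevel step.
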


Therefore, by applying Th.\ref{begbon}, we now prove the following fact, which is a corollary of both Th.\ref{deform} and Th.\ref{begbon}:
\begin{proposition}\label{orbiteq}
Let $p\in P$ be a trefoil parameter for the Rössler system, and let $G$ be the vector field given by Th.\ref{deform}. Then, we have the following:
\begin{itemize}
    \item The flow generated by $G$ is unique - up to orbital equivalence on the invariant set of its first-return map $g$ in $ABCD$.
    \item There exists a Template $T$, independent of $G$, s.t. any knot type on $T$ is generated by $G$ - and vice versa.
    \item Every knot type on $T$ (save perhaps for two) is realized as a periodic trajectory for $F_p$ - that is, one can associate a template with the Rössler system at trefoil parameters.
\end{itemize}
\end{proposition}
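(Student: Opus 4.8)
The plan is to obtain the three assertions by feeding the Smale horseshoe flow produced in Th.\ref{deform} into two external results — the Béguin--Bonatti theorem (Th.\ref{begbon}) and the Birman--Williams theorem (Th.\ref{BIRW}) — and then transporting the resulting template back to $F_p$ along the isotopy already constructed in Th.\ref{deform}.

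For the first bullet I would begin from Cor.\ref{gex}: the first-return map $g\colon V\to V$ is conjugate to the two-sided shift and is hyperbolic on $V$. Since a suspension flow is determined up to orbital equivalence by its return map up to topological conjugacy, the flow of $G$ on the invariant set $V$ is already fixed up to orbital equivalence. To package this as the uniqueness of a canonical object, I would check that $G$ meets the hypotheses of Th.\ref{begbon} — that it restricts to a flow transverse to the relevant boundary and hyperbolic on its non-wandering set — and then invoke Th.\ref{begbon} to produce a model, unique up to orbital equivalence.

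For the second bullet I would apply the Birman--Williams theorem to the hyperbolic invariant set of $G$, obtaining a unique embedded template $T$ and an injection $f\colon P(\Lambda_G)\to P(T)$ with $P(T)\setminus f(P(\Lambda_G))$ consisting of at most two orbits. Thus every knot type carried by $T$, save at most two, is realized by $G$; conversely every periodic orbit of $G$ projects into $T$, so the knot types of $G$ are exactly those of $T$. Independence of $T$ from the choice of $G$ then follows from the first bullet: two admissible choices yield orbitally equivalent flows, and orbital equivalence carries the Birman--Williams template to the Birman--Williams template, so the two coincide — concretely, $T$ is the horseshoe template denoted $L(0,1)$ in Th.\ref{geomod}.

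For the third bullet I would chain the second bullet with the final conclusion of Th.\ref{deform}: knot types on $T$ (save at most two) are realized by $G$, and knot types of $G$ (save at most two) are realized by $F_p$ via an isotopy of $\mathbf{R}^3$. The only delicate point — and the main obstacle — is to keep the number of discarded knot types at two rather than four. I expect this from identifying the two exceptional families: the orbits excluded by Birman--Williams are the boundary orbits of the branched surface $T$, while the orbits excluded by Th.\ref{deform} are exactly the periodic orbits born from the Hopf bifurcations at $P_{In}$ and $P_{Out}$ when $G$ was built. Since those Hopf orbits are precisely the orbits lying on the boundary of $T$ after the collapse, the two exceptional sets coincide and the count stays at two. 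Pinning down this coincidence, rather than the routine applications of the two cited theorems, is where I would focus the work.
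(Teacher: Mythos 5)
Your proposal follows essentially the same route as the paper: excise balls around the sink $P'_{In}$ and the source $P'_{Out}$ to obtain a genus-zero plug, invoke Th.\ref{begbon} for uniqueness of the model up to orbital equivalence, apply the Birman--Williams Theorem to produce $T$, and transport knot types back to $F_p$ along the isotopy of Th.\ref{deform}. The one point of divergence is the two-versus-four count you flag as the main obstacle: the paper does not perform that bookkeeping --- it asserts the knot types of $G$ are exactly those on $T$ (absorbing the Birman--Williams exceptional orbits without comment) and subtracts only the two Hopf-created orbits coming from Th.\ref{deform} --- so your proposed identification of the two exceptional families is additional care beyond what the paper's own argument supplies.
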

\begin{proof}
By Th.\ref{deform}, $G$ generates a smooth flow in $S^3$ - and by Cor.\ref{gex}, that flow is hyperbolic on an invariant set. Also recall that by construction the vector field $G$ has precisely two fixed points in $S^3$, $P'_{In}$, a sink, and $P'_{Out}$, a source (both generated by opening the fixed-points by Hopf-bifurcations).\\

Let $B_{In},B_{Out}$ denote open balls centered at $P'_{In},P'_{Out}$ (respectively) s.t. $G$ is transverse to the respective boundaries of both, $S_{In},S_{Out}$ - in particular, on $S_{In}$ $G$ points into $B_{In}$ while on $S_{Out}$ $G$ points outside $B_{Out}$. Consequentially, $B_{In},B_{Out}$ both lie in some positive distance from the periodic trajectories for $G$, which implies all the periodic trajectories for $G$ lie in $S^3\setminus(B_{In}\cup B_{Out})=M$. $M$ has a total genus $0$ - therefore, since by Th.\ref{deform} $G$ is hyperbolic on its maximal invariant set in $M$, by Def.\ref{model} it follows the flow generated by $G$ on $M$ is a model flow. Therefore, from Th.\ref{begbon}, we know the flow generated by $G$ on $M$ is unique up to orbital equivalence.\\

To continue, let $T$ denote the template corresponding to $G$ in $M$ - as given by The Birman-Williams Theorem (see Th.\ref{BIRW}). Additionally, let $p\in P$ denote a trefoil parameter for the Rössler system, and recall we denote the corresponding vector field by $F_p$ (see Eq.\ref{Field}). By the discussion above, it follows that when we deform the vector field $F_p$ to $G$ per Th.\ref{deform}, no matter how we generate $G$, it would always be a model - and by Th.\ref{begbon} these models are all orbitally equivalent to one another. It therefore follows the Template $T$ is also unique, and independent of the vector field $G$.\\

Finally, recall that by Th.\ref{deform}, every periodic trajectory for $G$ (save possibly for two) can be deformed to a periodic trajectory for $F_p$ by an isotopy of $\mathbf{R}^3$ - therefore, given any knot type $T'$ encoded by $T$ (save perhaps for two), there exists a periodic trajectory $T_0$ for $F_p$ whose type is also $T'$. All in all, the proof Prop.\ref{orbiteq} is now complete.
\end{proof}

\begin{figure}[h]
\centering
\begin{overpic}[width=0.4\textwidth]{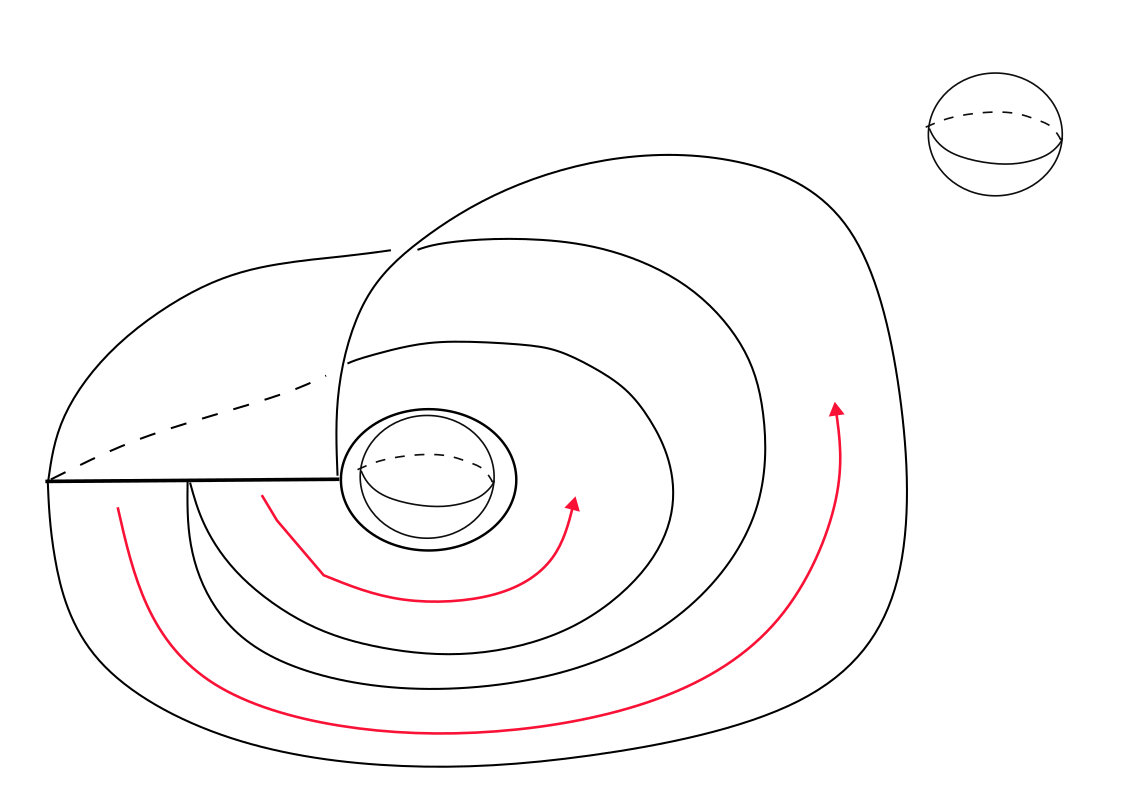}
\put(-30,260){$B$}
\put(350,270){$B_2$}
\put(860,570){$B_1$}
\end{overpic}
\caption[A horseshoe template.]{\textit{A horseshoe template, $V$ suspended around the trefoil.}}
\label{fig34}
\end{figure}

Having proven Prop.\ref{orbiteq}, we now apply Th.\ref{begbon} yet again - this time, to explicitly characterize the template $T$ given by Prop.\ref{orbiteq}. To do so, let us first consider a smooth vector field $H$ on $S^3$ which is generated by suspending the horseshoe map given in Fig.\ref{fig35}. In addition, assume $H$ has two fixed points in $S^3$ - first, $p_1$, a source, and second, $p_2$, a sink. Now, let $S_1$ denotes a sphere, enclosing a ball $B_1$ s.t. $p_1\in B_1$, on which $H$ points outside of $B_1$. Conversely, let $S_2$ denote another sphere, enclosing a ball $B_2$ s.t. $p_2\in B_2$, on which $H$ points into $B_2$. Now, suspend the horseshoe in accordance with the motion described in Fig.\ref{fig34}. Let us remark that since Smale Horseshoes are hyperbolic on their invariant set, by Th.\ref{BIRW} the periodic trajectories of $H$ correspond to a template $V$ as in Fig.\ref{fig34}. Moreover, by Th.\ref{begbon}, the vector field $H$ generates a model flow on $S^3\setminus(B_1\cup B_2)$. \\

\begin{figure}[h]
\centering
\begin{overpic}[width=0.45\textwidth]{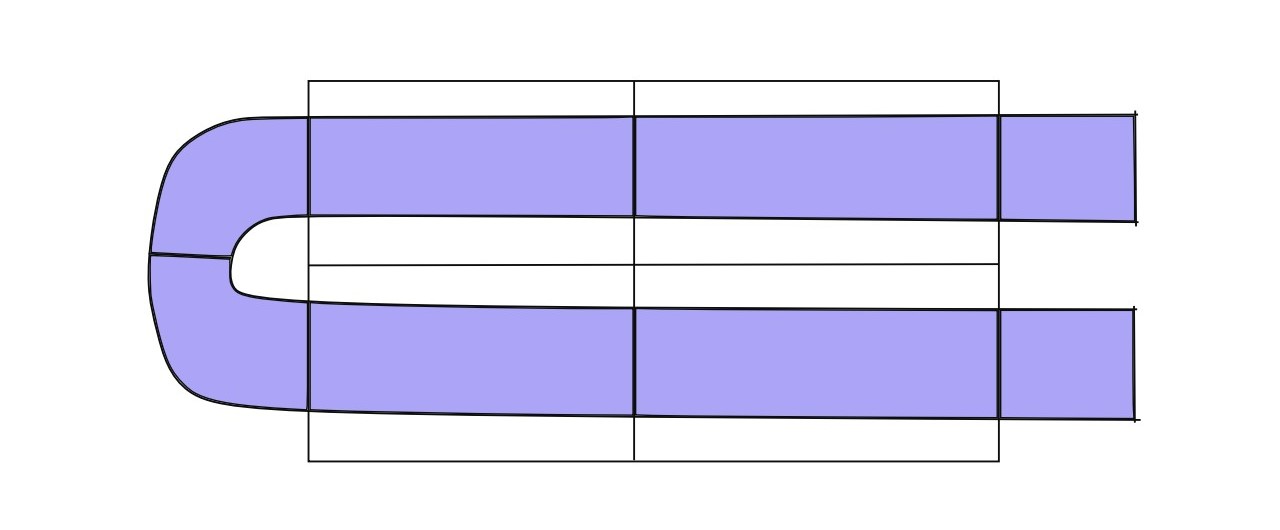}
\put(900,300){$h(b)$}
\put(900,240){$h(a)$}
\put(900,160){$h(c)$}
\put(900,100){$h(d)$}
\put(480,0){$F$}
\put(220,30){$c$}
\put(0,200){$h(F)$}
\put(220,360){$d$}
\put(780,30){$a$}
\put(780,360){$b$}
\put(190,200){$B$}
\put(590,360){$h(S)$}
\end{overpic}
\caption[Suspending a horseshoe and creating a template.]{\textit{$h:S\to S$, a Smale Horseshoe map. $F$ in Fig.\ref{fig35} denotes the fold-line of $h$ in $S$, and $B$ the branch line on the template $V$.}}
\label{fig35}
\end{figure}

Now, recall the first-return map generated by $G$, $g:ABCD\rightarrow \overline{U_p}$, with $ABCD$ as in the proof of Th.\ref{deform}, depicted in Fig.\ref{fig36}. Comparing $h$ with $g$, we see the vector fields $H,G$ suspend a horseshoe in the exact same way, and more importantly - both are models for the same flow (see the discussion before Th.\ref{begbon}). Hence by Th.\ref{begbon} we conclude the flows generated by $G$ and $H$ are orbitally equivalent. It now follows $T$, the template given by Prop.\ref{orbiteq} is the same as the template $V$ - therefore we conclude:
\begin{corollary}
    \label{knotype}
Let $G$ be as in Th.\ref{deform}, let $T$ be the template from Prop.\ref{orbiteq}, and let $V$ denote the template defined above. Then, $V=T$, and in particular, the template $V$ describes all the knot-types generated by the vector field $G$.
\end{corollary}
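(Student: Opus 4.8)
The plan is to prove $V=T$ by showing that the flows generated by $G$ and by $H$ are orbitally equivalent, and then invoking the uniqueness clauses of Th.\ref{begbon} and Th.\ref{BIRW} to conclude their templates coincide. By Prop.\ref{orbiteq}, $T$ is precisely the Birman--Williams template of the flow generated by $G$ on $M=S^3\setminus(B_{In}\cup B_{Out})$, while $V$ is by construction the Birman--Williams template of the flow generated by $H$ on $S^3\setminus(B_1\cup B_2)$. Since both ambient manifolds are diffeomorphic to $S^2\times[0,1]$ and hence have total genus $0$, and since both flows are hyperbolic on their non-wandering sets (Cor.\ref{gex} for $G$, and the fact that a suspended Smale horseshoe is hyperbolic on its invariant set for $H$), Def.\ref{model} shows that each of the two flows is a model flow.

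First I would compare the two first-return maps. By Cor.\ref{gex} the map $g\colon ABCD\to\overline{U_p}$ is conjugate to a Smale horseshoe map on its invariant set, and the same holds for $h\colon S\to S$ by construction (Fig.\ref{fig35}); thus $g$ and $h$ are conjugate as abstract maps. The essential point, however, is not the abstract conjugacy but that the two horseshoes are \emph{suspended in the same isotopy class} inside $S^3$: the stretch-and-fold pattern exhibited by $g$ in Fig.\ref{fig36} reinserts the rectangle $ABCD$ around the trefoil exactly as the motion in Fig.\ref{fig34} reinserts $S$ under $h$. Matching the branch line $B$, the fold line $F$, and the two returning strips of the two suspensions therefore exhibits $G$ and $H$ as two models for one and the same suspended, knotted horseshoe flow.

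With this identification in hand, I would invoke Th.\ref{begbon}: since $G$ and $H$ are both minimal-genus plugs for the same flow, the uniqueness clause forces the flow of $G$ on $M$ and the flow of $H$ on $S^3\setminus(B_1\cup B_2)$ to be orbitally equivalent on their respective non-wandering sets. Orbital equivalence carries periodic orbits to periodic orbits while preserving their knot and link types, so the two flows induce the same Birman--Williams template; by the uniqueness asserted in Th.\ref{BIRW} this gives $T=V$. The final clause of the corollary then follows immediately: by Prop.\ref{orbiteq} every knot type on $T$ is realized by $G$ and conversely, so after the identification $T=V$ the template $V$ describes exactly the knot types generated by $G$.

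The hard part will be the geometric comparison in the second step, namely verifying that $g$ and $h$ suspend the horseshoe with the same knotting and framing, and not merely with conjugate return dynamics. This is where care is genuinely required: two Smale horseshoes suspended with different framings around different knots produce inequivalent templates, so the whole equality $T=V$ rests on checking that the reinsertion of $ABCD$ dictated by the trefoil structure inherited from $F_p$ through Th.\ref{deform} agrees, strip for strip and along the branch line, with the motion prescribed for $h$ in Fig.\ref{fig34}. Once this matching is established, the appeals to Th.\ref{begbon} and Th.\ref{BIRW} are formal.
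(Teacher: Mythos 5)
Your proposal follows essentially the same route as the paper: construct the suspended horseshoe flow $H$, observe that both $G$ and $H$ are genus-zero models for the same suspended knotted horseshoe, invoke the uniqueness clause of Th.\ref{begbon} to get orbital equivalence, and conclude $T=V$ via the Birman--Williams Theorem. You are in fact more explicit than the paper about the one genuinely delicate point --- that the two horseshoes must be suspended with the same knotting and framing, not merely with conjugate return dynamics --- which the paper passes over with the phrase that $H$ and $G$ ``suspend a horseshoe in the exact same way.''
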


\begin{figure}[h]
\centering
\begin{overpic}[width=0.45\textwidth]{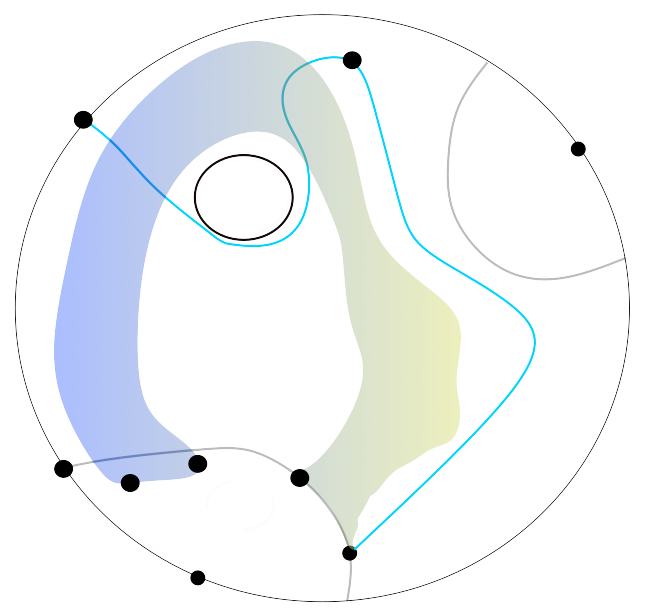}
\put(520,25){$B$}
\put(300,-10){$P'_{In}$}
\put(200,135){$g(CD)$}
\put(0,210){$A$}
\put(535,880){$D$}
\put(95,780){$C$}
\put(400,130){$g(AB)$}
\put(750,750){$P'_{Out}$}
\put(590,340){$g(ABCD)$}
\end{overpic}
\caption[$g$ as a Smale Horseshoe map.]{\textit{The horseshoe map $g:ABCD\to \overline{U_p}$.}}
\label{fig36}
\end{figure}

However, we can say more. Consider the template $T=V$ sketched on the left in Fig.\ref{fig37} below. By deforming that template isotopically (in $\mathbf{R}^3$), $T$ is isotoped to the $L(0,1)$ template, termed the \textbf{Lorenz $0-1$ Template},  sketched on the right of Fig.\ref{fig37} (see \cite{HW} for more details on $L(0,1)$). In other words, every knot type encoded $T$ is also encoded $L(0,1)$ and vice versa - hence, by Cor.\ref{knotype} we immediately conclude:

\begin{corollary}\label{corlorenz}
Let $G$ be the vector field given by Th.\ref{deform}. Then, every knot type on $L(0,1)$ is generated by $G$ as a periodic trajectory - conversely, every periodic trajectory for $G$ corresponds to precisely one knot type on the template $L(0,1)$.
\end{corollary}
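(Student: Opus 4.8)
The plan is to obtain the statement almost immediately from Cor.\ref{knotype} by transporting everything along the isotopy pictured in Fig.\ref{fig37}. First I would record what Cor.\ref{knotype} already buys us: the template $V$ of Fig.\ref{fig34} is identical to the template $T$ attached to $G$ by Prop.\ref{orbiteq}, and $V$ describes \emph{all} the knot types that $G$ realizes as periodic trajectories. Thus the entire content that remains is the comparison between the two templates $V$ and $L(0,1)$; no further analysis of the flow $G$ itself is needed.

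Next I would make the deformation of Fig.\ref{fig37} precise as an \emph{ambient} isotopy. Following \cite{HW}, one writes down a smooth family $\Phi_t$, $t\in[0,1]$, of diffeomorphisms of $S^3$ with $\Phi_0=\mathrm{id}$ and $\Phi_1(V)=L(0,1)$, carried out so that $\Phi_t$ re-routes the two bands of the suspended horseshoe template into the two branches of the standard Lorenz $0$-$1$ template without ever creating a self-intersection of the branched surface and without altering the combinatorics of the joining and splitting charts. Because $\Phi_1$ respects the chart gluings, it intertwines the expansive semiflow on $V$ with that on $L(0,1)$, and hence restricts to a bijection $P(V)\to P(L(0,1))$ between the respective collections of periodic orbits. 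Invoking Def.\ref{torusknot}, since $\Phi_1$ is the time-one map of an isotopy of $S^3$ (equivalently of $\mathbf{R}^3$, the point at infinity being kept off the templates), each periodic orbit $\kappa$ on $V$ and its image $\Phi_1(\kappa)$ on $L(0,1)$ have the same knot type. Consequently the set of knot types carried by $V$ coincides with the set carried by $L(0,1)$, and combining this with Cor.\ref{knotype} yields both directions of the claim: every knot type on $L(0,1)$ is realized by $G$, and every periodic trajectory of $G$ realizes exactly one knot type appearing on $L(0,1)$.

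The main obstacle is not the knot-type bookkeeping, which is forced by the definition of knot type as an ambient-isotopy invariant, but rather verifying that Fig.\ref{fig37} genuinely depicts an ambient isotopy of $S^3$ carrying $V$ to $L(0,1)$ as branched surfaces equipped with their semiflows --- that is, that $V$ and $L(0,1)$ are isotopic as semiflow-carrying branched surfaces and not merely abstractly homeomorphic. Concretely one must check that the re-routing moves can be performed through embeddings (no band is forced to pass through another) and that they preserve the linking data recorded by the template; this is exactly the assertion underlying Fig.\ref{fig37} and is standard for the Lorenz templates treated in \cite{HW}. Once this isotopy is in hand, the corollary follows with no additional work.
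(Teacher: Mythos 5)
Your proposal is correct and follows essentially the same route as the paper: the paper likewise deduces the corollary directly from Cor.\ref{knotype} together with the isotopy of $\mathbf{R}^3$ depicted in Fig.\ref{fig37} carrying $T=V$ to $L(0,1)$, which forces the two templates to encode the same knot types. Your additional remarks on verifying that this is a genuine ambient isotopy of semiflow-carrying branched surfaces only make explicit what the paper asserts with reference to \cite{HW}.
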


\begin{figure}[h]
\centering
\begin{overpic}[width=0.45\textwidth]{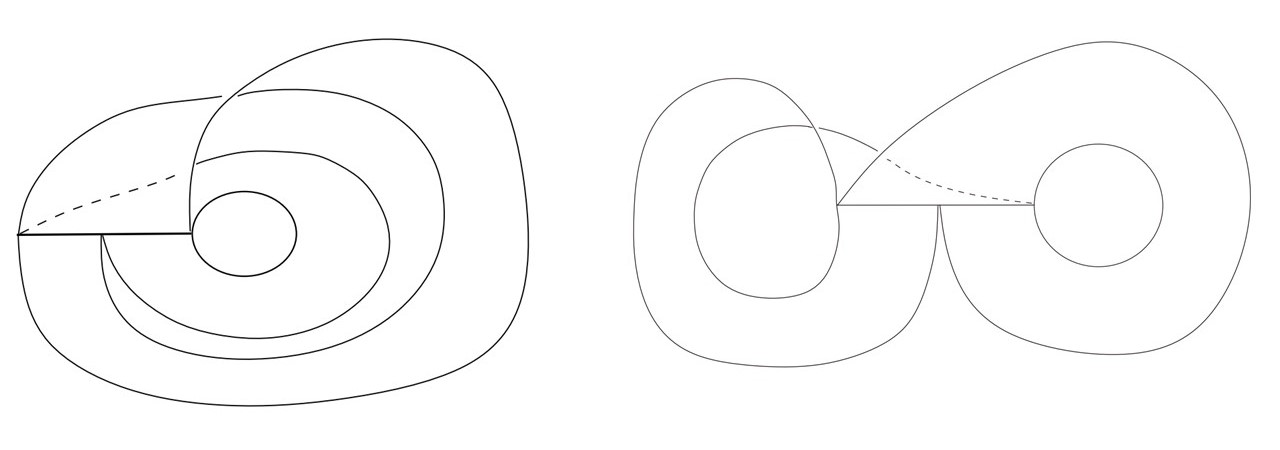}
\put(650,320){$L(0,1)$}
\put(150,320){$T$}
\end{overpic}
\caption[$T$ and $L(0,1)$.]{\textit{The two templates, $T$ and $L(0,1)$. $T$ can be deformed to $L(0,1)$ by an isotopy of $\mathbf{R}^3$.}}
\label{fig37}
\end{figure}

Having proven the $L(0,1)=T$ template describes the dynamical complexity generated by $G$, we are now ready to prove Th.\ref{geometric} - with which we conclude this section. To do so, recall that given a trefoil parameter $p\in P$, we denote by $F_p$ the corresponding vector field (see Eq.\ref{Field}) - that is, $\dot{x}=F_p(x)$ is the Rössler system corresponding to the parameter values $p=(a,b,c)$). We now prove:

\begin{theorem}\label{geometric}
Let $p\in P$ be trefoil parameter for the Rössler system. Then, save perhaps for two knot types, $F_p$ generates every knot-type present in the $L(0,1)$ template. In particular, $F_p$ generates infinitely many torus knots (as defined in Def.\ref{torusknot}).
\end{theorem}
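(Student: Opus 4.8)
The plan is to assemble the consequences of Th.\ref{deform} already recorded in this section — chiefly Cor.\ref{corlorenz} — and then supply the single remaining knot-theoretic input, namely that the template $L(0,1)$ carries infinitely many distinct torus knots.

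First I would invoke Cor.\ref{corlorenz}: the vector field $G$ produced by Th.\ref{deform} realizes, as periodic trajectories, precisely the knot types carried by $L(0,1)$, the correspondence being a bijection between periodic trajectories of $G$ and knot types on $L(0,1)$. In particular the dynamical complexity of $G$ in the sense of Def.\ref{dyncomp} is exactly the set of knot types on $L(0,1)$. I would then push this down to $F_p$ using the final clause of Th.\ref{deform} (equivalently the last item of Prop.\ref{orbiteq}): every periodic trajectory of $G$, save possibly two, is carried to a periodic trajectory of $F_p$ by an ambient isotopy of $\mathbf{R}^3$, and such an isotopy preserves the knot type by Def.\ref{torusknot}. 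Discarding at most the two exceptional trajectories — hence at most two knot types — it follows that every knot type on $L(0,1)$ is realized by $F_p$, which is the first assertion.

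For the second assertion I would show that $L(0,1)$ carries infinitely many mutually distinct torus knots. Working with the expansive semiflow on $L(0,1)$ and its coding over the two branches, one writes down an explicit infinite family of periodic words whose closed orbits wind monotonically around the template, so that each closure may be looped on an unknotted torus and is therefore a torus knot in the sense of Def.\ref{torusknot}; that these fall into infinitely many distinct knot types is then read off from a standard torus-knot invariant such as the genus. This step is where I expect the real difficulty to lie, since it requires identifying the knot type of explicit template orbits on the \emph{twisted} template $L(0,1)$ rather than on the untwisted Lorenz template, and I would rely on the analysis of Lorenz-like templates in \cite{HW} to isolate this torus-knot subfamily.

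Granting this, the theorem follows at once: removing the at most two exceptional knot types of the previous paragraph from an infinite family of torus knots still leaves an infinite family, so $F_p$ realizes infinitely many torus knots, completing the proof. The only genuinely new labour beyond bookkeeping with the already-proven corollaries is the torus-knot count on $L(0,1)$; everything else is transport of knot types along the isotopy of Th.\ref{deform}.
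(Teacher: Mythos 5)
Your proposal follows the paper's proof essentially verbatim: combine Cor.\ref{corlorenz} with the isotopy statement of Th.\ref{deform} to transport all but at most two knot types of $L(0,1)$ to periodic trajectories of $F_p$, then observe that $L(0,1)$ carries infinitely many torus knots. The only divergence is that the ``real difficulty'' you anticipate --- exhibiting infinitely many distinct torus knots on $L(0,1)$ --- is not reproved in the paper but simply cited as Th.~$6.1.2.b$ of \cite{HW}, so your sketch of that step is extra (and optional) labour rather than a different route.
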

\begin{proof}
By Th.\ref{deform}, every periodic trajectory for $G$ (save perhaps for two) can be smoothly deformed to a periodic trajectory for $F_p$ by some isotopy of $\mathbf{R}^3$. Since by Cor.\ref{corlorenz} the knots types corresponding to the periodic trajectories of $G$ are precisely those on the $L(0,1)$ template, it follows $F_p$ also generates every knot-type in $L(0,1)$ (save perhaps for two). Finally, since $L(0,1)$ includes infinitely many Torus knots (see Th.$6.1.2.b$ in \cite{HW}), Th.\ref{geometric} follows.
\end{proof}
Before concluding this section, we now prove Th.\ref{persistence}, which shows the periodic dynamics given by Th.\ref{geometric} all persist in the parameter space of the Rössler system. Namely, we prove the following result:

\begin{theorem}
  \label{persistence}  Let $p\in P$ be a trefoil parameter, and let $v\in P$ be another parameter, s.t. $v\ne p$. Then, given any knot type $T$ on $L(0,1)$ (save perhaps for one) there exists an $\epsilon>0$ s.t. whenever $||v-p||<\epsilon$, $F_v$ generates a periodic trajectory $T_v$ whose knot-type is $T$.
\end{theorem}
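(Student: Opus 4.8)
The plan is to realize each knot type $T$ on $L(0,1)$ as a fixed point of an iterate of the first-return map, to compute its fixed-point index at the trefoil parameter $p$ using the machinery already developed, and then to transport that index to all nearby parameters by combining the $C^1$-dependence of the flow on $(a,b,c)$ with the perturbation-stability of a nonzero fixed-point index. The guiding principle is that a nonzero index is a \emph{local} and \emph{perturbation-stable} quantity, so only it — and not the entire symbolic description — needs to survive the passage from $p$ to $v$.

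First I would reduce the statement to a question about first-return maps. By Th.\ref{geometric}, every knot type $T$ on $L(0,1)$ under consideration is encoded by a periodic word, which under the coding of Th.\ref{th31} corresponds to a periodic sequence $s\in\{1,2\}^\mathbf{N}$ of some minimal period $k$ with $s\neq\{1,1,\dots\}$ (this constant sequence is the single exception allowed in the statement). By Th.\ref{th31} the sequence $s$ is realized by a periodic point $x_s$ of the first-return map $f_p$ of minimal period $k$, whose trajectory $T_p$ is a knot of type $T$; passing freely between periodic orbits of $F_p$ and periodic points of $f_p$ is justified by Cor.\ref{TR}. Thus it suffices to produce, for each $v$ near $p$, a period-$k$ point of the first-return map $f_v$ of $F_v$ whose trajectory $T_v$ is $C^1$-close to $T_p$: since $C^1$-close embedded loops in $\mathbf{R}^3$ are ambient isotopic, such a $T_v$ automatically carries the knot type $T$.

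Next I would show that $x_s$ has nonzero fixed-point index as a fixed point of $f_p^{k}$. Here I would invoke the isotopy between $f_p$ and the genuine Smale horseshoe map $g$ built in the proof of Th.\ref{deform}. By Cor.\ref{gex} the map $g$ is conjugate to the full shift on its hyperbolic invariant set, so the period-$k$ point $y_s$ coded by $s$ is a hyperbolic saddle of $g^{k}$ and therefore has fixed-point index $\pm1$. By Cor.\ref{deformation11} the orbit of $x_s$ is carried to that of $y_s$ along the isotopy without being removed or collapsed; consequently one may choose a small isolating neighborhood $N$ of $x_s$, with $N,f_p(N),\dots,f_p^{k-1}(N)$ pairwise disjoint and bounded away from the cut $\delta$ and the puncture $P_0$, across whose boundary no fixed point of the corresponding iterate crosses during the isotopy. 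Homotopy invariance of the index along this admissible isotopy then gives $\mathrm{ind}(f_p^{k},N)=\mathrm{ind}(g^{k},N)\neq 0$.

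Finally I would run the perturbation argument. Because $N$ and its finite forward images lie in a compact subset of $U_p$ uniformly bounded away from the heteroclinic skeleton $W=\Theta\cup\Gamma$ and from $P_0$, continuous dependence of solutions on the pair (parameter, initial condition), together with the transversality of $U_p$ from Cor.\ref{TR} (an open condition), shows that for $\|v-p\|$ small the map $f_v$ is well-defined and $C^1$-close to $f_p$ on $N$. Choosing $\epsilon$ so small that $f_v^{k}$ has no fixed point on $\partial N$, homotopy invariance yields $\mathrm{ind}(f_v^{k},N)=\mathrm{ind}(f_p^{k},N)\neq 0$, hence a fixed point $x_s^{v}\in N$; since its forward orbit shadows that of $x_s$ through the disjoint blocks $f_p^{j}(N)$, it has minimal period exactly $k$, and its trajectory $T_v$ is $C^1$-close to $T_p$, so $T_v$ has knot type $T$. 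The hard part will be the control near $v$ in the \emph{absence} of the defining heteroclinic structure: at $v\neq p$ the ball $B_\alpha$, the disc $D_\alpha$, the cut $\delta$ and the puncture $P_0$ need no longer exist, so the global first-return map of Prop.\ref{lem33} has no direct counterpart. The resolution is exactly the localization above — to work only inside the compact isolating block $N$, which is separated from all of these singular objects, and to carry across only the index rather than the full symbolic picture; verifying that a single $N$ can be chosen simultaneously isolating for $f_p^{k}$, admissible for the $f_p$–$g$ isotopy, and disjoint from its own iterates is the technical heart of the proof.
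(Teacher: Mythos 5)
Your core strategy --- realize the knot as a period-$k$ point of the first-return map, establish a nonzero fixed-point index for $f_p^{k}$ on an isolating block at the trefoil parameter, and transport that index to nearby parameters by homotopy invariance --- is exactly the strategy of the paper's proof of Th.\ref{persistence}. The paper obtains the isolating disc $B_p$ and the nonvanishing of the index by citing Lemma 4.5 of \cite{I} directly, rather than by running the index along the isotopy from $f_p$ to the horseshoe $g$ as you propose; your route is plausible but, as you yourself flag, it requires an isolating neighborhood that stays admissible (no fixed points of the $k$-th iterate crossing its boundary) for the \emph{entire} isotopy, and Cor.\ref{deformation11} (unremovability and uncollapsibility of the orbits in $Q$) does not by itself furnish such a neighborhood. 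This part is a real obligation you have deferred rather than discharged, but it is not the main problem.

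The genuine gap is in your last step, where you conclude that the trajectory $T_v$ of the new fixed point is $C^1$-close to $T_p$ and hence ambient isotopic to it. The index argument only localizes the new fixed point to the block $N$ (resp.\ $B_v$), and $N$ cannot in general be shrunk to an arbitrarily small neighborhood of $x_p$: by Th.\ref{th31} the set $D_s=\pi^{-1}(s)$ is only known to be nonempty and connected, so the fixed points of $f_p^{k}$ coded by $s$ may form a continuum, and the smallest admissible isolating block must contain all of it. Consequently $x_s^{v}$ is only within $\mathrm{diam}(N)$ of $x_p$, its connecting flow segments are only close to the flow segments emanating from \emph{somewhere} in $f_p^{j}(N)$, and no pointwise $C^1$-closeness of the closed orbits follows. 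The paper closes this gap differently: it considers the solid tube $K_p$ of flow lines joining $B_p,f_p(B_p),\dots,f_p^{k}(B_p)$, observes that both $T_p$ and $T_v$ traverse their respective tubes once (because the blocks are pairwise disjoint and the minimal period is $k$), so that crossings of the curve correspond to crossings of the tube, and then notes that $K_p$ is deformed to $K_v$ by an ambient isotopy --- giving equality of knot types without any closeness of the curves themselves. Your argument needs to be replaced or supplemented by something of this kind.
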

\begin{proof}
        Recall the set $Q$, given by Th.\ref{th31}. By Th.\ref{geometric} and Th.\ref{deform}, we know every periodic trajectory $T_G$ for the vector field $G$ (save perhaps for one) is deformed by an isotopy of $\mathbf{R}^3$ to a periodic trajectory $T_p$ for $F_p$. Moreover, recalling the cross-section $U_p$ and the proof of Th.\ref{deform} (see the discussion before Th.\ref{th21}), we know $T_p\cap U_p\subseteq Q$ - that is, $T_p$ corresponds to some periodic orbit for the first-return map $f_p$ in $Q$ (see the discussion before Th.\ref{th31} and Th.\ref{th21}). Moreover, let us recall the by Th.\ref{deform} $T_G,T_p$ have the same knot type - therefore, since by Th.\ref{geometric} every periodic trajectory $T_G$ lies on the template $L(0,1)$, to prove Cor.\ref{persistence} it would suffice to prove the knot type of $T_p$ persists under sufficiently small perturbations in $P$.\\
        
To continue, following Ch.VII.5 of \cite{Dold}, we define the Fixed Point Index as follows: let $V\subseteq \mathbf{R}^2$ be a topological disc, and let $f:V\to\mathbf{R}^2$ be continuous. Provided $Per(f)=\{x\in V|f(x)=x\}$ is compact in $V$, we define the \textbf{Fixed-Point Index} of $f$ in $V$ to be the degree of $f(x)-x$ in $V$ - in particular, when the Fixed-Point Index is non-zero $f$ has a fixed point in $V$. Now, let $f_t:V\to\mathbf{R}^2$, $t\in[0,1]$ be a homotopy of continuous maps, and set $Per=\{(x,t)|t\in[0,1],f_t(x)=x\}$ - provided $Per$ is compact in $V\times[0,1]$, the Fixed-Point Index is constant along the homotopy: that is, under these assumptions $f_0,f_1$ have the same Fixed Point Index (for a proof, see Th.VII.5.8 in \cite{Dold}).\\

\begin{figure}[h]
\centering
\begin{overpic}[width=0.5\textwidth]{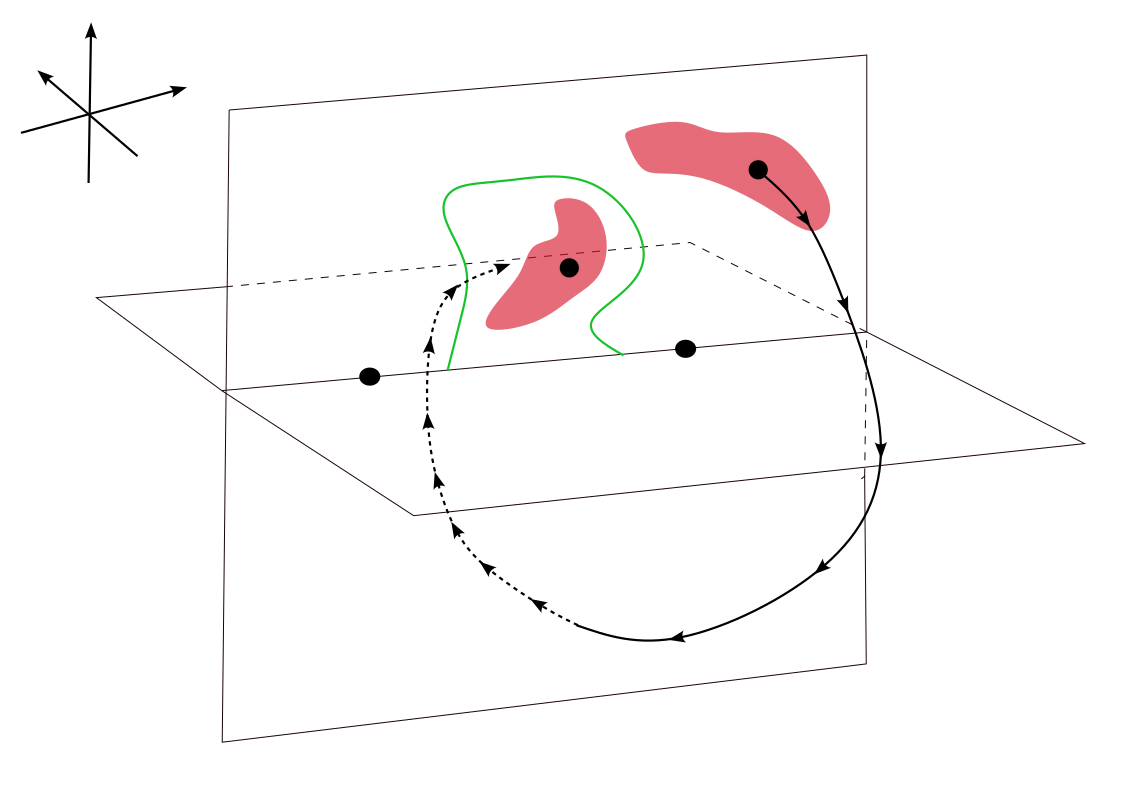}
\put(70,690){$z$}
\put(320,550){$D_1$}
\put(405,500){$D_2$}
\put(450,420){$f_p(B_p)$}
\put(600,480){$\rho$}
\put(600,550){$B_p$}
\put(800,480){$U_p$}
\put(170,630){$x$}
\put(10,650){$y$}
\put(455,60){$\{\dot{y}=0\}$}
\put(455,300){$\{\dot{x}=0\}$}
\put(270,350){$P_{In}$}
\put(630,380){$P_{Out}$}
\end{overpic}
\caption[Fig31]{The flow lines connecting $B_p,f_p(B_p),...,f^{k}_p(B_p)$ are transverse to $U_p$. Consequentially, $f_p,...,f^k_p$ are continuous on $B_p$. See the Prop.4.2 in \cite{I} for a proof.}
\label{pers1}
\end{figure}

We now apply these notions as follows. To do so, let $x_p$ be a point in $T_p\cap U_p$ - that is, $x_p$ is periodic for the first-return map $f_p:U_p\to U_p$ (wherever defined), of, say, minimal period $k$. Now, let us recall that by Lemma 4.5 in \cite{I}, there exists a topological disc $B_p\subseteq U_p$ satisfying the following (in Lemma 4.5 in $I$, $B_p$ is denoted by $V_s$):
\begin{itemize}
    \item $x_p\in B_p$.
    \item $f_p,...,f_p^k$ are all continuous on $B_p$ - that is, the flow lines connecting $f^i_p(B_p),f^{i+1}_p(B_p)$, $0\leq i\leq k$ are all transverse to $U_p$ (see the illustration in Fig.\ref{pers1}). 
    \item For every $i\ne j$, $0\leq i,j<k$, $f^i_p(B_p)\cap f^j_p(B_p)=\emptyset$ (see the illustration in Fig.\ref{pers1}).
    \item The set $Per(f_p)=\{x\in B_p|f^k_p(x)=x\}$ is compact in $B_p$ (that is, it lies away from $\partial B_p$) - moreover, the Fixed-Point Index of $f^k_p$ on $B_p$ is non-zero.
\end{itemize}

Now, let us perturb the vector field $F_p$ to some $F_v$, $v\in P$ - recall this perturbation deforms the cross-section $U_p$ continuously to $U_v$ (see Cor.\ref{TR}). By the discussion above, it also follows $B_p$ to continuously deformed to $B_v$, some topological disc on $U_v$ (see the discussion immediately before Lemma \ref{obs}). Recalling we denote by $f_v:\overline{U_v}\to\overline{U_v}$ the first-return map of $U_v$ (wherever defined), by the discussion above we conclude that when $v$ is sufficiently close to $p$, we have the following:
\begin{itemize}
    \item The flow lines connecting $f^i_v(B_v),f^{i+1}_p(B_v)$, $0\leq i\leq k$ are all transverse to $U_v$ - that is, $f_v,...,f^k_v$ are all continuous on $B_v$.
    \item For every $i\ne j$, $0\leq i,j<k$, $f^i_v(B_v)\cap f^j_v(B_v)=\emptyset$.
    \item For every $0<i\leq k$, $f^i_p:B_p\to U_p$ and $f^i_v:B_v\to U_v$ are homotopic. 
\end{itemize}

Consequentially, $f_p,...,f^k_p$ are continuously deformed to $f_v,...,f^k_v$  - i.e., there exists a homotopy $f_t$ of disc maps, $t\in[0,1]$, s.t. $f_0=f^k_p$ and $f_1=f^k_v$. Moreover, because $f^k_p$ has no fixed points in $\partial B_p$, it follows that whenever $v$ is sufficiently close to $p$, the set $Per(f_v)=\{x\in B_v|f^k_v(x)=x\}$ is compact in $B_v$ - that is, the fixed points of $f^k_v$ all lie away from $\partial B_v$. Consequentially, $f^k_p,f^k_v$ have the same Fixed-Point Index in $B_p,B_v$, respectively - and since we already know the Fixed-Point Index of $f^k_p$ in $B_p$ is non-zero it follows by the discussion above there exists $x_v\in B_v$ s.t. $f^k_v(x_v)=x_v$ - and since for every $i\ne j$, $0\leq i,j<k$, $f^i_v(B_v)\cap f^j_v(B_v)=\emptyset$, the minimal period of $x_v$ is $k$. Moreover, by definition, $x_v$ lies on $T_v$, a periodic trajectory for $F_v$.\\

\begin{figure}[h]
\centering
\begin{overpic}[width=0.5\textwidth]{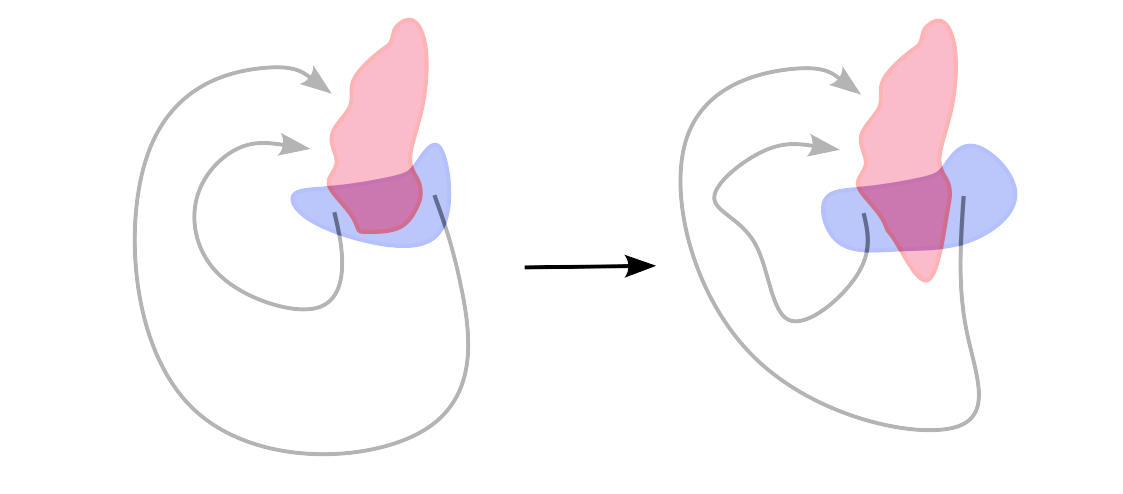}
\put(206,400){$K_p$}
\put(700,400){$K_v$}
\put(170,630){$x$}
\put(10,650){$y$}
\put(915,260){$B_v$}
\put(405,250){$B_p$}
\put(380,350){$f_p(B_p)$}
\put(850,380){$f_v(B_v)$}
\end{overpic}
\caption[Fig31]{The flow lines connecting $B_p,f_p(B_p),...,f^{k}_p(B_p)$ on the left are continuously deformed to the flow lines connecting $B_v,f_v(B_v)...,f^k_v(B_v)$ on the right - i.e., $K_p$ is isotopically deformed to $K_v$. In this illustration, $k=1$.}
\label{pers2}
\end{figure}

We now conclude the proof of Cor.\ref{persistence} by showing the knot type of $T_p,T_v$ is the same. To do so, consider $K_p$, the set of flow-lines connecting $B_p, f_p(B_p),...,f^k_p(B_p)$. It forms a three-dimensional body in $\mathbf{R}^3$, which is knotted with itself (see the illustration in Fig.\ref{pers2}) - and since the periodic trajectory $T_p$ by definition lies inside $K_p$, because the minimal period of $x_p$ is $k$ it follows any crossing on $T_p$ implies a crossing on $K_p$ and vice versa. Additionally, as $K_p$ is composed of the flow lines connecting $B_p, f_p(B_p),...,f^k_p(B_p)$, when we smoothly deform $F_p$ to $F_v$ the body $K_p$ is continuously deformed to $K_v$ - a three-dimensional body generated by the flow lines connecting $B_v,f_v(B_v),...,f^k_v(B_v)$.\\

Since by definition $T_v\subseteq K_v$ and because the minimal period of $x_v$ is (again) $k$, it similarly follows that any crossing on $T_v$ generates a crossing on $K_v$ (see the illustration in Fig.\ref{pers2}). Consequentially, since the three-dimensional bodies $K_v,K_p$ can be deformed to one another by an isotopy of $\mathbf{R}^3$, it follows $T_p,T_v$ must have the same knot type - and by the discussion above, Cor.\ref{persistence} now follows.
\end{proof}

\begin{remark}
    With just a little work, Cor.\ref{persistence} can be generalized to sufficiently small $C^1$ perturbations of trefoil parameters.
\end{remark}
\section{Discussion}
Before we conclude this paper, let us briefly discuss the meaning of Th.\ref{geometric}, Th.\ref{deform} and Th\ref{persistence} - as well as their possible generalizations and the question they raise. As stated at the Introduction, the results of this paper are motivated by the Thurston-Nielsen Classification Theorem - and in more generality, by the theory of topological dynamics on surfaces. Therefore, in the same spirit we are motivated to propose the following generalization of Th.\ref{deform} and Th.\ref{geometric} to non-heteroclinic parameters:

\begin{conj}\label{knot}
Let $p\in P$ be a trefoil parameter for the Rössler system, and recall that for $v\in P$ we denote the corresponding vector field by $F_v$. Then, there exists a neighborhood of $p$ in $P$, $O\subseteq P$, s.t. for every $v\in O$ the following is satisfied:

\begin{itemize}    
\item There exists a smooth vector field $G_v$, hyperbolic on its non-wandering set, s.t. $F_v$ can be smoothly deformed on $\mathbf{R}^3$to $G_v$.
\item Every periodic trajectory $T_1$ of $G_v$ (save perhaps for one) can be deformed by an isotopy of $\mathbf{R}^3$ to a periodic trajectory $T_0$ of $F_v$. In particular, if $T_1,T_2$ are two distinct periodic trajectories for $G_v$, they cannot be deformed to the same $T_0$.
\item The Template generated by $G_v$, $T_v$, is a sub-template of the $L(0,1)$ Template. Consequentially, $F_v$ generates every knot type on $T_v$ (save perhaps for one) as a non-constant periodic trajectory.
\end{itemize}
\end{conj}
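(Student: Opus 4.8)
The plan is to mirror, at the nearby parameter $v$, the three-stage deformation of Theorem \ref{deform}, using the fixed-point-index and knotted-flow-tube machinery of Theorem \ref{persistence} to transport periodic data from $G_v$ back to $F_v$, while invoking structural stability of hyperbolic horseshoes to obtain a \emph{single} neighborhood $O$ of $p$. First I would examine the first-return map $f_v$ for $v$ near $p$. By Cor.\ref{TR} the cross-section $U_v$ varies smoothly with the parameter, so the region $H_p$ of Prop.\ref{lem33} deforms to a topological disc $H_v\subseteq U_v$ on which $f_v$ is $C^1$-close to $f_p$ away from the singular locus $\delta$. The built-in obstruction is that $v$ is no longer a trefoil parameter: the bounded heteroclinic trajectory $\Theta$ breaks, so Cor.\ref{coin} fails and the full symbolic dynamics of Prop.\ref{doubla}, which is powered by the singular blow-up at $P_0$, need not survive in its entirety.

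The second step is to isolate a persistent sub-horseshoe. Following the nested-rectangle construction of Prop.\ref{doubla}, but excising a neighborhood of $\delta$ and $P_0$, one obtains a compact region $R\subseteq H_p$ bounded away from the singular locus on which $f_p$ realizes a uniformly hyperbolic sub-horseshoe $V_R$. Because uniform hyperbolicity and the transverse crossing of the Markov rectangles are $C^1$-open conditions on this compact region, structural stability should yield a single $\epsilon>0$ such that for $\|v-p\|<\epsilon$ the map $f_v$ carries a hyperbolic invariant set $V_R^v$ with $f_v|_{V_R^v}$ topologically conjugate to $f_p|_{V_R}$. This provides the uniform neighborhood $O$ and a sub-shift of finite type whose symbols embed into those of the full $2$-shift of Prop.\ref{doubla}.

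Third, I would run Stage $III$ of Theorem \ref{deform} at $v$: straighten $f_v$ near $V_R^v$ into a genuine horseshoe map, suspend it, and open the two saddle-foci by Hopf bifurcations to produce a smooth $G_v$ that is hyperbolic on its non-wandering set, with $F_v$ smoothly deformable to $G_v$. By the Birman-Williams Theorem (Th.\ref{BIRW}) $G_v$ then carries a template $T_v$; since $V_R^v$ is a sub-system of the full horseshoe whose suspension is $L(0,1)$ (Cor.\ref{corlorenz}), and templates respect sub-systems, $T_v$ is a sub-template of $L(0,1)$. The realization in $F_v$ is then exactly the argument of Theorem \ref{persistence}: each periodic orbit of $G_v$ corresponds to a period-$k$ point of $f_v$ in $V_R^v$ with nonzero fixed-point index, hence to a periodic trajectory $T_0$ of $F_v$, and the isotopy of the knotted flow tube $K_v$ preserves the knot type; the two exceptional orbits are, as in Theorem \ref{deform}, those that limit onto the Hopf-bifurcated fixed points.

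The main obstacle is precisely the step where the heteroclinic connection breaks. The full horseshoe of $f_p$ is anchored at the singular locus, and only the portion bounded away from $\delta$ is genuinely uniformly hyperbolic and therefore structurally stable; thus for a fixed $v\ne p$ one can only guarantee a proper sub-template $T_v\subsetneq L(0,1)$, with $T_v$ exhausting $L(0,1)$ as $v\to p$. Pinning down exactly which sub-template persists, and in particular whether the index argument can be made uniform over the entire (non-hyperbolic, singular) return map rather than over a preselected sub-horseshoe, is the crux of the difficulty and is why the statement is posed as a conjecture rather than a theorem.
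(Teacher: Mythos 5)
The statement you are proving is posed in the paper as a conjecture in the Discussion section; the paper offers no proof of it, so there is nothing to compare your argument against except the machinery it would have to rest on. Judged on its own terms, your outline is the natural one — persist the cross-section and return map, extract a stable sub-horseshoe, rerun Stage III of Th.\ref{deform} at $v$, and transport orbits back via the index argument of Th.\ref{persistence} — and you correctly identify the crux at the end. But there is one step that is not merely difficult but unsupported by anything in the paper: you assert that $f_p$ restricted to a compact region $R\subseteq H_p$ bounded away from $\delta$ and $P_0$ is a \emph{uniformly hyperbolic} sub-horseshoe, and you then invoke $C^1$ structural stability to get a single neighborhood $O$. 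The paper never establishes hyperbolicity of $f_p$ on any invariant set. Th.\ref{th31} produces only a topological horseshoe via a semiconjugacy $\pi:Q\to\{1,2\}^{\mathbf N}$, and the persistence of its periodic orbits under deformation is obtained through isotopy-stability (Cor.\ref{deformation11}, i.e. Bestvina--Handel/Handel-type arguments), precisely because structural stability is unavailable. Hyperbolicity only appears \emph{after} the vector field is deformed to $G$; it is a property of the artificial model, not of $F_p$. Without uniform hyperbolicity of $f_p$ itself, the structural-stability step collapses, and what remains is the per-orbit fixed-point-index argument of Th.\ref{persistence}, in which the admissible perturbation size $\epsilon$ depends on the orbit (through the disc $B_p$ and the period $k$). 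That gives persistence of each knot type separately but not a single $O$ valid for an entire sub-template, which is exactly the uniformity the conjecture demands.

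A secondary, smaller issue: your claim that "templates respect sub-systems," so that $T_v$ is automatically a sub-template of $L(0,1)$, presumes that the suspension of the persistent sub-shift at $v$ embeds in $S^3$ isotopically to the corresponding sub-suspension at $p$. This is plausible and is in the spirit of the $K_p\to K_v$ flow-tube isotopy in Th.\ref{persistence}, but it needs an argument that the embedding of the whole branched surface (not just of individual closed orbits) deforms continuously with $v$; orbit-by-orbit knot-type preservation does not by itself control the relative placement (linking) of distinct orbits unless the flow tubes are deformed simultaneously. To your credit, you flag the essential obstruction honestly, and your proposal would be a reasonable program if the hyperbolicity of a sub-horseshoe of $f_p$ could be established — but as written that hypothesis is doing all the work and is exactly what is missing.
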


Despite its technical formulation, if correct, Conjecture \ref{knot} implies that around trefoil parameters in $P$ the Rössler system is essentially hyperbolic: that is, even though the actual dynamics of the Rössler system may not necessarily be hyperbolic, it is at least qualitatively so - i.e., it can be smoothly deformed to a hyperbolic vector field with a minimal loss of dynamical data along the way. Therefore, if true, Conjecture \ref{knot} implies the Chaotic Hypothesis (see \cite{gal}) holds for the Rössler system - at least on some open region in the parameter space. In particular, Conjecture \ref{knot} implies chaotic dynamics are an essential part of the dynamics generated by the Rössler system - hence these dynamics cannot be easily removed by an arbitrary smooth deformation of the vector field. It is easy to see this would make the vector fields $G_v$ analogues for the Pseudo-Anosov maps known from the Thurston-Nielsen Classification Theorem.\\

Another takeaway from our results (and in particular, from Th.\ref{persistence}) is that it gives a certain framework for the bifurcations of the Rössler system around trefoil parameters. To state this heuristic, let us recall that by Th.\ref{deform}, all the periodic trajectories for $G$ are hyperbolic and isolated - i.e., if $T$ is a periodic trajectory for $G$ and if $\{T_n\}_n$ is a collection of periodic trajectories for $G$ s.t. $T_n\to T$, then the period of $T_n$ wr.t. to $G$ must diverge to $\infty$. Consequentially, the \textbf{Orbit Index} for $T$, $\phi(T)$, (as defined in Sect.2 of \cite{PY2}) is $-1$. Now, let us recall the following fact (see Th.4.2 in \cite{PY2}):
\begin{claim}
\label{contith}    Let $F$ be a $C^3$ vector field on $\mathbf{R}^3$ and let $T$ be an isolated periodic trajectory for $F$ s.t. $\phi(T)\ne0$. Additionally, let $\{F_t\}_t$ be a smooth curve of $C^3$ vector fields, $t>0$, s.t. $F_0=F$. Then, as we smoothly vary $F$, $T$ can be destroyed in one of three ways:
\begin{enumerate}
    \item $T$ closes to a center fixed point by a Hopf bifurcation.
    \item $T$ goes through a Blue Sky catastrophe, i.e., both its period and diameter diverge to $\infty$.
    \item The period of $T$ diverges to $\infty$ while $T$ remains bounded (for example, by undergoing a period-doubling cascade), after which $T$ collapses to an a-periodic trajectory.
    \end{enumerate}
    
\end{claim}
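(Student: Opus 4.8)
The plan is to read Th.\ref{contith} not as a new computation but as a statement about the \emph{continuation} of the snake of periodic orbits emanating from $T$, with the trichotomy obtained by enumerating the ways that continuation can terminate. The central tool is that the orbit index $\phi$ is, by its very construction in Sect.2 of \cite{PY2}, a continuation invariant: it is assembled from the fixed-point indices of the return maps of $F_t$ along $T$ and its iterates, and since the fixed-point index is homotopy invariant (this is exactly the principle from Th.VII.5.8 in \cite{Dold} that we already exploited in the proof of Th.\ref{persistence}), $\phi$ cannot change so long as the orbit persists as an isolated periodic trajectory of bounded period, bounded diameter, and bounded away from the fixed points of $F_t$.

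First I would set up the continuation space $\Sigma = \{(t,T') : T' \text{ is a periodic orbit of } F_t \text{ close to } T\}$ and let $\Sigma_0$ be the connected component containing $(0,T)$. The hypothesis $\phi(T)\neq 0$ is precisely what forces $\Sigma_0$ to be more than a single point: an isolated periodic orbit of nonzero orbit index cannot be removed by a small perturbation, so $T$ must \emph{continue} in $t$ — this is the flow analogue of the fact that a fixed point of nonzero index survives small perturbations. Hence ``$T$ is destroyed along $F_t$'' can only mean that the continuation $\Sigma_0$ runs into a parameter value $t^*$ at which the compactness underlying this persistence breaks down, and the theorem is an enumeration of the admissible breakdowns.

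Next I would run an Arzel\`a--Ascoli argument on the family of orbits $T_t \subseteq \Sigma_0$ as $t \to t^*$, with the minimal period $\tau(t)$ as the controlling parameter. A family of orbits of bounded period contained in a fixed compact set would subconverge to a genuine periodic orbit of $F_{t^*}$ of the same period, hence would continue; so destruction forces exactly one of three degenerations. (i) If $\tau(t)\to 0$, the orbit shrinks to a point; rescaling by $\tau(t)$ and passing to the limit shows the limiting point is a fixed point of $F_{t^*}$ whose linearization carries a pair of purely imaginary eigenvalues, i.e.\ a Hopf bifurcation, which is alternative (1). (ii) If the orbits leave every compact set so that their diameter diverges (together with the period), one is in the blue sky catastrophe of alternative (2). (iii) If instead $\tau(t)\to\infty$ while the $T_t$ remain in a fixed compact set, Arzel\`a--Ascoli forces them to accumulate, in the Hausdorff metric, on a bounded invariant set of $F_{t^*}$ that is not a single periodic orbit — the bounded, period-diverging scenario of alternative (3), realised for instance at the accumulation of a period-doubling cascade on an aperiodic limit.

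The main obstacle is the first step: rigorously establishing that $\phi(T)\neq 0$ forces continuation, and in particular that $T$ cannot be annihilated by merging with a second periodic orbit in a saddle-node of periodic orbits without the index bookkeeping already accounting for it. This is where the precise definition of $\phi$ from \cite{PY2} is essential — it is tailored so that at a saddle-node the two colliding orbits carry canceling contributions, whence a lone orbit with $\phi(T)\neq 0$ cannot disappear that way and the snake has no interior endpoint. Once this index-theoretic core is in place, the three alternatives follow from the routine compactness dichotomy above, so I would expect the bulk of the genuine work to sit in the continuation-invariance of the orbit index rather than in the case analysis.
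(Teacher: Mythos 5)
The first thing to say is that the paper contains no proof of this statement: Th.\ref{contith} is quoted verbatim as Th.4.2 of \cite{PY2} and used as a black box in the discussion of Conjecture \ref{persss}, so there is no internal argument to compare yours against. Judged against the actual Mallet--Paret--Yorke continuation theory that \cite{PY2} develops, your architecture is the right one: the orbit index is built from fixed-point indices of return maps, it is invariant along a continuation as long as period, diameter, and distance to fixed points stay controlled, nonzero index rules out termination by a saddle-node cancellation, and the trichotomy is then an enumeration of the ways compactness can fail. You also correctly identify that the genuine work sits in the continuation-invariance of $\phi$ and the cancellation bookkeeping at saddle-nodes, not in the case analysis.

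Two concrete problems remain in the case analysis, though. First, your characterization of the Hopf termination is wrong: at a Hopf bifurcation the minimal period does \emph{not} tend to $0$ --- it tends to $2\pi/\omega$, where $\pm i\omega$ are the purely imaginary eigenvalues of the linearization at the limiting fixed point; what collapses is the diameter. So the correct dichotomy in the compact, bounded-period branch is ``the orbits shrink onto a fixed point'' versus ``they accumulate on a genuine periodic orbit,'' and your rescaling-by-$\tau(t)$ argument, as written, starts from a false premise. Second, the step ``bounded period in a fixed compact set $\Rightarrow$ subconvergence to a periodic orbit of the same period, hence continuation'' glosses over the case where the limit is a periodic orbit of strictly smaller minimal period (the family accumulating on a subharmonic); this is precisely why \cite{PY2} works with the \emph{virtual period} rather than the minimal period, and without that device your trichotomy is not yet exhaustive. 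Neither issue changes the statement, but both would have to be repaired before the sketch is a proof.
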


Now, recall every periodic trajectory for $G$ (save possibly for two) is continuously deformed to a periodic trajectory for $F_p$ - and since every periodic trajectory $T$ for $G$ is both isolated and satisfies $\phi(T)=-1$, it follows the periodic trajectories for $G$ all satisfy the assumptions of Th.\ref{contith}. Now, let $C$ denotes the collection of periodic trajectories for $F_p$ which are deformed to some periodic trajectory for $G$ - by Th.\ref{contith} above, we expect the periodic trajectories in $C$ to terminate in one of three ways, as described above.\\

In \cite{MBKPS} it was shown there exist parameters $(a,b,c)$ outside the parameter space $P$ in which the Rössler system goes through a Hopf-bifurcation - leading to the creation of two fixed points (and a periodic trajectory). Even though one may be tempted to assume the $P-$globally continuable trajectories for the Rössler system are eventually all destroyed by a Hopf bifurcation, there in fact exists another possibility, which appears equally probable.\\

\begin{figure}[h]
\centering
\begin{overpic}[width=0.4\textwidth]{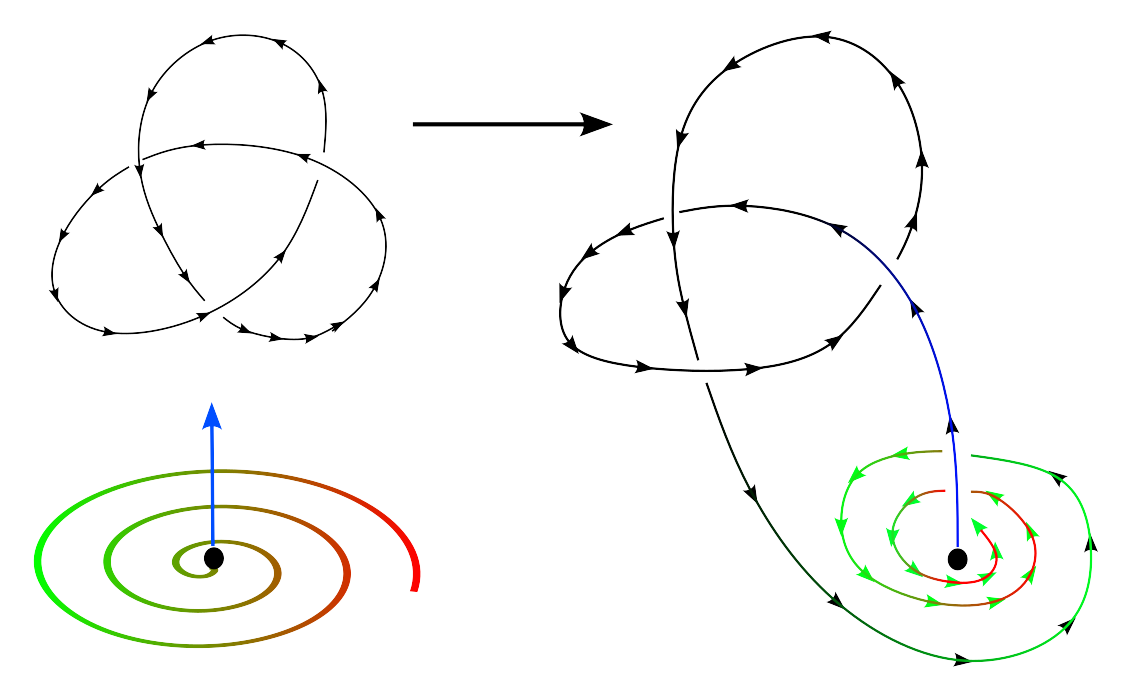}
\end{overpic}
\caption[Fig39]{A periodic trajectory collapsing into a homoclinic trajectory to a saddle focus.}
\label{homoclis}
\end{figure}

To state that possibility, recall a homoclinic trajectory can be generated by a periodic trajectory colliding with a fixed-point, as illustrated in Fig.\ref{homoclis} (see \cite{GKP} for a general review for the bifurcation phenomena associated with such collisions). Therefore, considering the numerically observed connection between the onset of chaos in the Rössler system and homoclinic trajectories (see, for example, \cite{MBKPS}, \cite{BBS} and \cite{G}), it is also equally probable the periodic trajectories in $C$ are destroyed by a collision with a fixed-point (or two) - thus becoming homoclinic or heteroclinic trajectories. As observed in \cite{MBKPS}, there exist many homoclinic trajectories in the parameter space $P$, of varying shapes. As such, inspired by this heuristic, we are motivated to make the following conjecture about the bifurcation structure around trefoil parameters:

\begin{conj}
\label{persss}    Let $p\in P$ be a trefoil parameter, and let $T$ be a periodic trajectory in $C$. Then, there exists a parameter $v\in P$ s.t. as we perturb the Rössler system from $p$ to $v$ through the parameter space, $T$ collapses into a homoclinic or a heteroclinic trajectory for the Rössler system. 
\end{conj}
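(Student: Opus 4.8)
The plan is to treat Conjecture \ref{persss} as a global continuation problem and argue by elimination using the orbit-index trichotomy of Th.\ref{contith}. First I would fix a smooth path $\{F_t\}_{t\in[0,1]}$ in the parameter space $P$ with $F_0=F_p$, and follow the branch of periodic trajectories emanating from $T$. Since every $T\in C$ is isolated, hyperbolic, and has orbit index $\phi(T)=-1\neq 0$, the branch cannot simply disappear: by Th.\ref{contith} it must terminate in one of the three ways listed there — a Hopf closure to a center, a Blue Sky catastrophe, or an unbounded-period but bounded-diameter collapse to an aperiodic trajectory. The persistence statement Th.\ref{persistence} guarantees the branch is genuinely nonempty and continues for small $t$, so the trichotomy applies along the whole path until a catastrophe is met.

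The second step is to exclude the first two alternatives as long as the path stays inside $P$. A Hopf closure (case $1$) would require one of the fixed points to acquire a pair of purely imaginary eigenvalues; but Assumption $3$ forces $\rho_{In}>0$ and $\rho_{Out}<0$ \emph{strictly} for every parameter in $P$, so $P_{In},P_{Out}$ remain saddle-foci and never degenerate to centers. Hence case $1$ cannot occur at any $v\in P$. A Blue Sky catastrophe (case $2$) requires the diameter of $T$ to diverge; but for parameters in $P$ the flow extends to $S^3$ with only a degenerate fixed point at infinity (Th.\ref{th21}), and the two-dimensional invariant manifolds shield trajectories from escaping, so one expects a bounded trapping region confining the relevant periodic orbits to persist under small perturbation. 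Establishing such a uniform confinement along the path rules out case $2$. Consequently the branch must terminate through case $3$: the period of $T$ diverges while $T$ stays bounded.

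The final and decisive step is to identify the aperiodic limit produced by case $3$ as a homoclinic or heteroclinic trajectory to the saddle-foci. Here I would exploit the symbolic description of Th.\ref{th31} together with the Shilnikov condition (Assumption $3$), which holds everywhere in $P$. The key observation is that arbitrarily long periods in the invariant set correspond to itineraries in $\{1,2\}^{\mathbf{N}}$ that spend ever-longer stretches near the excluded fixed sequence $\{1,1,1,\dots\}$, and by construction this sequence codes passages through the puncture $P_0=\Theta\cap\overline{U_p}$ — that is, passages arbitrarily close to the one-dimensional stable manifold of $P_{In}$. As the period diverges, the corresponding trajectory is forced to pass within arbitrarily small distance of $P_{In}$ (and, along $W$, of $P_{Out}$), so its Hausdorff limit contains a full orbit biasymptotic to the fixed points. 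Choosing the direction of the path in $P$ so that the breaking of $\Theta$ synchronizes with this accumulation would then yield the desired parameter $v$ at which $T$ collapses onto a homoclinic or heteroclinic connection.

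I expect the third step to be the main obstacle. Cases $1$ and $2$ are excluded by relatively soft structural and boundedness arguments, but pinning down the limit of case $3$ as \emph{exactly} a homoclinic or heteroclinic orbit — rather than a more general aperiodic invariant set arising, say, from a period-doubling cascade — is essentially an inverse-Shilnikov statement, and it requires precise control of how the discontinuity of the first-return map at $P_0$ degenerates as the parameter moves. In particular one must verify that the continuation reaches this degeneration at a finite $v\in P$, neither leaving $P$ through $\partial P$ nor forcing the branch to become unbounded in $P$; this delicate global-continuation issue is precisely what keeps the statement a conjecture rather than a theorem.
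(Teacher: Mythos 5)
The statement you are proving is Conjecture \ref{persss}: the paper offers no proof of it, only the heuristic in Section 4 built on the Orbit Index and the trichotomy of Th.\ref{contith}, and it explicitly defers the rigorous version to a future paper. Your proposal follows that same heuristic route (orbit index $\phi(T)=-1$, apply Th.\ref{contith}, eliminate two of the three termination modes), so you are not taking a different approach from the paper — you are attempting to fill in the paper's own sketch. The attempt is honest about where it fails, but two of the gaps are worse than you indicate.

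First, Th.\ref{contith} only constrains \emph{how} a branch of periodic orbits with nonzero orbit index can terminate; it does not assert that the branch terminates at some $v\in P$ at all. The branch could persist throughout $P$ and only die on or beyond $\partial P$ (indeed the paper notes the Hopf bifurcations of \cite{MBKPS} occur \emph{outside} $P$), so your elimination argument, even if airtight, proves at best a disjunction conditional on termination inside $P$ — it cannot produce the parameter $v$ whose existence the conjecture asserts. Your exclusion of case 2 ("one expects a bounded trapping region to persist") is likewise an expectation, not an argument: the ball $B_\alpha$ is defined by the coincidence $W^u_{In}=W^s_{Out}$, which is a codimension-one (non-generic) condition that is destroyed the moment one leaves the trefoil parameter, so there is no obvious uniform confinement along the path. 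Second, and more seriously, your identification of the case-3 limit is built on a false step: you claim that divergence of the period forces the symbolic itineraries to spend ever-longer stretches near the constant sequence $\{1,1,1,\dots\}$ and hence to pass arbitrarily close to $P_{In}$. Period divergence in Th.\ref{contith} refers to the flow period (return time), and for a fixed trajectory $T\in C$ the itinerary is a \emph{fixed} periodic word in $\{1,2\}^{\mathbf{N}}$ of fixed combinatorial length; nothing forces it to degenerate toward the excluded constant sequence, and a period-doubling cascade (which you mention and then set aside) produces exactly the kind of bounded aperiodic limit that is \emph{not} a homoclinic or heteroclinic connection. Distinguishing these two endings — collision with a fixed point versus accumulation on an aperiodic invariant set — is the entire content of the conjecture, and neither your proposal nor the paper's discussion closes it.
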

In a future paper, the author will study the connection between the Orbit Index and the bifurcations of the Rössler and Lorenz systems - in particular, the heuristic described above will be made precise.\\
\begin{figure}[h]
\centering
\begin{overpic}[width=0.4\textwidth]{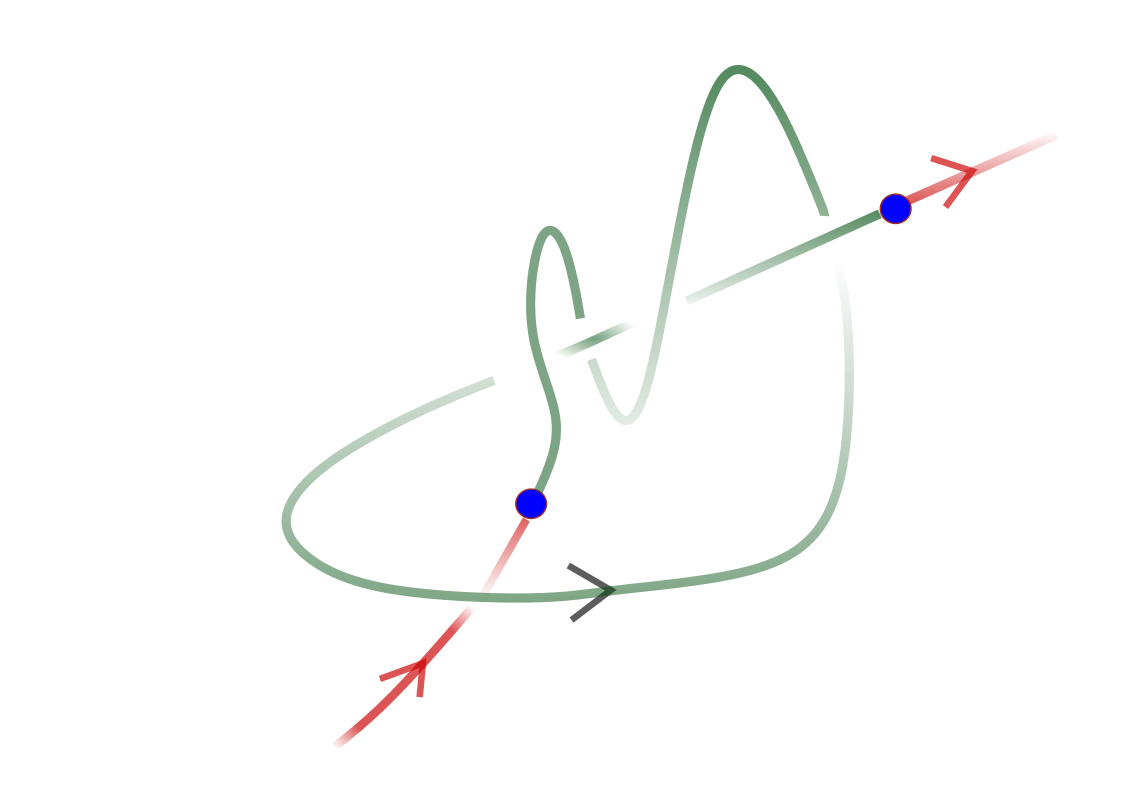}
\put(380,245){$P_{In}$}
\put(780,460){$P_{Out}$}
\end{overpic}
\caption[Fig39]{A heteroclinic knot $\Lambda$ which is more complex than a trefoil.}
\label{fig39}
\end{figure}

As a final remark, let us recall the $L(0,1)$ Template is not a universal template (see \cite{HW}) - that is, it does not encode every knot type in existence. Motivated by this fact, we conjecture Th.\ref{deform}, Th.\ref{geometric} and Th.\ref{persistence} can be generalized to other heteroclinic scenarios in $P$ - provided they generate heteroclinic knots complex at least like the trefoil. For example, consider the heteroclinic knot in Fig.\ref{fig39}. If Th.\ref{geometric} can be generalized to such a heteroclinic knot, the template corresponding to the flow would have to include a horseshoe on \textbf{$3$ }symbols. Therefore, in this spirit, we propose the following conjecture:

\begin{conj}\label{complexity}
   Let $p\in P$ be a parameter which generates a heteroclinic knot, $\Lambda\subseteq S^3$. Then, the more $\Lambda$ is knotted with itself, the more knot types are in the template $T_p$ associated with the Rössler system corresponding to $p$ - and in particular, every knot type on $T_p$ (save possibly for two) is realized as a periodic trajectory for the corresponding Rössler system. 
\end{conj}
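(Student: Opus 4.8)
The plan is to run the machinery of Th.\ref{deform}--Th.\ref{geometric} in the general heteroclinic setting, with the number of symbols of the resulting horseshoe dictated by the knotting of $\Lambda$. First I would fix a precise notion of ``how knotted $\Lambda$ is with itself''. Writing $\Lambda=W^s_{In}\cup W^u_{Out}\cup\{P_{In},P_{Out},\infty\}$ as in Def.\ref{def32}, the bounded heteroclinic arc $\Theta$, together with the one-dimensional flow between $P_{Out}$ and $P_{In}$, closes up into a knot; I would present $\Theta$ as a closed braid on $n$ strands about the axis supplied by the one-dimensional invariant manifold, so that the trefoil case is exactly $n=2$. The complexity parameter is this braid $\beta\in B_n$ (or, more coarsely, the strand number $n$), and the goal is to show the first-return map is a topological horseshoe on $n$ symbols whose suspension is the closed-braid template $T_p$ determined by $\beta$.

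Next I would redo the cross-section analysis of Prop.\ref{boundball} and Prop.\ref{lem33} in this setting. Because $\Theta$ now winds around $P_{In}$ more intricately, it meets the half-plane $U_p$ in several points $P_0,\dots,P_k$, and $f_p^{-1}(l)$ contains several cut-arcs $\delta_1,\dots,\delta_m$; these partition the return domain $H_p$ into strips across which $f_p$ stretches and folds $n$-fold rather than twice. Repeating the nested-intersection construction of Prop.\ref{doubla} with an $n$-letter alphabet produces a continuous surjection onto $\{1,\dots,n\}^{\mathbf{Z}}$ semiconjugating $f_p$ to the full $n$-shift, and the Bestvina--Handel argument used for Th.\ref{th31} (see \cite{BeH}) lets me isotope $f_p$ to a genuine $n$-symbol Smale horseshoe while keeping the periodic orbits unremovable and uncollapsible, exactly as in Cor.\ref{deformation11}.

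With the $n$-symbol horseshoe in hand I would run Stages I--III of Th.\ref{deform} verbatim: extend $F_p$ to $S^3$ as in Th.\ref{th21}, open $P_{In},P_{Out}$ by Hopf bifurcations, and straighten the distorted horseshoe to obtain a vector field $G$ hyperbolic on its non-wandering set, every periodic orbit of which (save the two opened fixed points) deforms by an ambient isotopy of $\mathbf{R}^3$ back to a periodic trajectory of $F_p$. By the Birman--Williams Theorem (Th.\ref{BIRW}) the flow of $G$ carries a template $T_p$, and by the B\'eguin--Bonatti uniqueness theorem (Th.\ref{begbon}) $T_p$ is the model-flow template of an $n$-symbol horseshoe suspended along $\beta$ -- exactly the closed-braid template read off from $\Lambda$. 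This already yields the realization half of the conjecture: every knot type carried by $T_p$, save possibly two, is a periodic trajectory of $F_p$.

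The genuinely hard part is the monotonicity claim that ``more knotting in $\Lambda$ means more knot types in $T_p$''. Templates are not linearly ordered by any single scalar, and the set of knot types a template supports is in general very hard to compute or compare, so the honest plan is to prove monotonicity with respect to the concrete partial order given by the braid $\beta$: if $p'$ generates $\Lambda'$ whose braid $\beta'$ is obtained from $\beta$ by adding strands or positive crossings, I would exhibit an embedding of $T_p$ into $T_{p'}$ as a subtemplate (restrict to the original sub-alphabet and sub-braid), so that every knot type of $T_p$ survives in $T_{p'}$, and then show the extra strands force new cablings and torus knots absent from $T_p$ (as for the three-symbol horseshoe template associated with the more complex heteroclinic knot of Fig.\ref{fig39}). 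Establishing that this subtemplate inclusion is \emph{strict on the level of realized knot types} -- rather than merely plausible -- is where the real difficulty lies, since it requires distinguishing the knot content of two templates; I expect a proof in full generality, for an arbitrary heteroclinic knot rather than a braided one, to remain beyond present template-theoretic tools, which is precisely why the statement is posed as a conjecture.
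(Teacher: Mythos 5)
You should first note what the paper itself does here: Conjecture \ref{complexity} is posed in the Discussion section precisely as a conjecture, with no proof given -- the paper only offers the heuristic that more complex heteroclinic knots (as in Fig.\ref{fig39}) should force first-return maps on more symbols and hence richer templates. So there is no ``paper proof'' to match; the question is whether your roadmap closes the gap, and it does not -- as you yourself concede in the final paragraph. Your plan is a sensible program, and it correctly identifies the two halves of the statement: a realization half (run the Th.\ref{deform}--Th.\ref{geometric} machinery with an $n$-symbol horseshoe, then apply the Birman--Williams Theorem \ref{BIRW} and the uniqueness result Th.\ref{begbon}) and a monotonicity half (``more knotting implies more knot types''), which you honestly flag as the genuinely open part.

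The concrete gaps are worth naming more sharply than you do. First, the step ``redo the cross-section analysis of Prop.\ref{boundball} and Prop.\ref{lem33}'' is not a routine repetition: those results, and Th.\ref{th31} behind them, rest on the full trefoil-parameter package of Def.\ref{def32} -- in particular on the coincidence $W^u_{In}=W^s_{Out}$ bounding the invariant ball $B_\alpha$ (which confines trajectories and makes the first-return map on $\overline{D_\alpha}\setminus\{P_0\}$ well defined) and on the single transverse intersection $\Theta\cap\overline{U_p}=\{P_0\}$. The conjecture's hypothesis is only that $p$ generates a heteroclinic knot $\Lambda$, so none of this structure is available for free; without an analogue of $B_\alpha$ you have no trapping region, no disc $D_\alpha$, and no analogue of Cor.\ref{deformation11} to make the periodic orbits unremovable under the isotopies of Stages I--III. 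Second, your dictionary ``braid on $n$ strands $\Rightarrow$ $n$-symbol horseshoe'' is asserted rather than derived; the paper's own heuristic (three symbols for the knot of Fig.\ref{fig39}) is numerical/pictorial, and deriving the symbol count from the braid data of $\Lambda$ would itself require a new Bestvina--Handel computation for each braid type, not a verbatim rerun. Third, even granting a subtemplate inclusion $T_p\subseteq T_{p'}$ for a sub-braid, strictness on the level of realized knot types does not follow: distinct templates can carry overlapping or even coinciding knot sets (the existence of universal templates, noted in the paper via \cite{HW}, shows knot content is far from injective in the template), so exhibiting new torus knots or cablings in $T_{p'}\setminus T_p$ is exactly the unproven crux. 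In short: your proposal is a reasonable formalization of the paper's heuristic and a plausible research program, but it is not a proof, and the paper does not contain one either.
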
 
Conjecture \ref{complexity} can be heuristically stated as follows - the more complex the heteroclinic knot $\Lambda$ is, the more complex the dynamics of the corresponding Rössler system are. If true, Conjecture \ref{complexity}provides an analytic explanation to two well-known numerical observations. The first is that as the parameters $(a,b,c)$ are varied, the number of symbols in the first-return map of the Rössler attractor increases - i.e., the symbolic dynamics change from two symbols into three, four, and so on (see, for example, \cite{BBS},\cite{MBKPS},\cite{Le}). The second observation is that the same is true for the associated templates - whose topology also changes with the variation of $(a,b,c)$, thus forcing the creation of more symbols in the first return map of the attractor (see \cite{RO},\cite{Le}).\\

Before concluding this paper, let us remark that both in this paper and in \cite{I} the question of the existence (or non-existence) of the numerically observed Rössler attractor was left unaddressed. In a future paper it will be proven that given a parameter $p\in P$, provided the invariant manifolds of the fixed point $P_{Out}$ satisfy a certain topological condition, there exists an attractor for the flow. Moreover, inspired by Conjecture \ref{knot} and the Betsvina-Handel Algorithm (see \cite{BeH}) we will prove the dynamics around (and beyond) the said attractor can be reduced to those of a one-dimensional model: the Logistic Family of quadratic polynomials.

\printbibliography
\end{document}